\makeatletter \@addtoreset{equation}{section} \makeatother
\renewcommand\thetable{\thesection.\@arabic\c@table}
\theoremstyle{plain}
\newtheorem{maintheorem}{Theorem}
\newtheorem{theorem}{Theorem}[section]
\newtheorem{proposition}{Proposition}[section]
\newtheorem{lemma}{Lemma}[section]
\newtheorem{corollary}{Corollary}[section]
\newtheorem{definition}{Definition}[section]
\newtheorem{remark}{Remark}[section]
\newtheorem{example}{Example}[section]
\newcommand{\de} {\delta}       
\newcommand{\vep}{\varepsilon}
\newcommand{\si} {\sigma}
\newcommand{\N}{\mathbb{N}}
\newcommand{\R}{\mathbb{R}}
\newcommand{\supp}{\operatorname{supp}}
\newcommand{\diam}{\operatorname{diam}}
\newcommand{\cE}{\mathcal{E}}
\newcommand{\cG}{\mathcal{G}}
\newcommand{\cQ}{\mathcal{Q}}
\newcounter{main}
\title[Weak Gibbs Measures for Local Homeomorphisms]
{Weak Gibbs Measures for Local Homeomorphisms}
\author{Giovane Ferreira and Vanessa Ramos}
\address{Giovane Ferreira, Departamento de Matem\'atica, Universidade Federal do Maranh\~ao\\
Av. dos Portugueses, 1966, Bacanga, 65065-545, São Luís, BRAZIL.}
\email{giovane.ferreira@ufma.br}
\address{Vanessa Ramos, Departamento de Matem\'atica, Universidade Federal do Maranh\~ao\\
	Av. dos Portugueses, 1966, Bacanga, 65065-545, São Luís, BRAZIL.}
\email{ramos.vanessa@ufma.br}
\thanks{GF was supported by PRAPG-CAPES (Brazil).} 
\thanks{VR was supported by PRAPG-CAPES (Brazil) and by Universal-CNPQ (Brazil).}
\keywords{Gibbs Measures; Equilibrium States; Large Deviation.}
\subjclass[2010]{37A05, 37D35}
\begin{document}

\begin{abstract} 
We study a broad class of local homeomorphisms and continuous potentials, proving the existence and uniqueness of weak Gibbs measures. From the Gibbs property, we show the uniqueness of equilibrium states and derive a large deviations principle. Furthermore, we extend these results to a class of attractors that are semiconjugate to local homeomorphisms from our original setting. Our approach is based on non-uniform conformal-like property and applies to a wide range of topological dynamical systems, including non-uniformly expanding maps, zooming local homeomorphisms, attractors arising from solenoid-like constructions and certain families of partially hyperbolic horseshoes.
\end{abstract}


\maketitle


\date{\today}

\section{Introduction}
The interplay between statistical physics and dynamical systems has produced a powerful framework for studying invariant measures, statistical properties, and dimensional characteristics of complex systems. Central to this theory are {\textit{Gibbs measures}}, which exhibit strong regularity properties and provide precise exponential estimates for the measure of dynamical balls in terms of the pressure and the Birkhoff sum of the potential.

The concept of Gibbs measures was introduced by Yakov G. Sinai in his pioneering work~\cite{Sinai}, where he transferred ideas from statistical mechanics into the realm of dynamical systems - particularly to Anosov diffeomorphisms. His construction of Gibbs measures laid the groundwork for defining thermodynamic quantities in dynamical systems, enabling the rigorous study of entropy, pressure, equilibrium states, SRB measures, statistical stability and multifractal analysis.

Building on Sinai’s foundation,  Bowen~\cite{Bowen} and Ruelle~\cite{Ruelle68}, \cite{Ruelle78} further developed the theory, extending it to uniformly hyperbolic (Axiom A) systems. Their contributions helped establish thermodynamic formalism as one of the cornerstones of modern dynamical systems theory.

Despite its success in uniformly hyperbolic settings, the classical notion of a Gibbs measure often fails to extend to systems lacking uniform expansion, Markov partitions, or regular potentials. In such contexts, the strict exponential bounds required by the Gibbs property may no longer hold. This limitation has motivated the development of weakened versions of the Gibbs condition - most notably the concept of {\textit {weak Gibbs measures}} - which relax the uniformity of the constants or the time scales involved, while still preserving a form of exponential control over the measure of dynamical balls. Despite this relaxation, weak Gibbs measures retain enough structure to characterize equilibrium states and to support the study of statistical properties of the system. Understanding such measures is essential for extending thermodynamic formalism to broader classes of dynamical systems, where the classical Gibbs property may fail or only hold in a weaker sense.

The existence of weak Gibbs measures has been  established in systems exhibiting non-uniform expansion~\cite{OV08, VV10}, weaker forms of hyperbolicity \cite{Yur00}, 
irreducible shift  spaces \cite{PS19}, irregular or low-regularity potentials \cite{Yur03}, asymptotically additive sequences of potentials \cite{IommiYayama17} and other structures. In a broader context, we study a general class of local homeomorphisms and continuous potentials, and we prove the existence and uniqueness of an ergodic weak Gibbs measure under a flexible condition inspired by a non-uniform conformal-like property. We also extend this result to a class of attractors semiconjugated to local homeomorphisms within our original setting. Our approach applies to a wide range of topological dynamical systems, including non-uniformly expanding maps~\cite{OV08, RV17, VV10}, zooming local homeomorphisms~\cite{Pin11}, attractors arising from solenoid-like constructions~\cite{ARS18, CN, FO}, and certain families of partially hyperbolic horseshoes~\cite{diazetal, RS17, RS16}. 

The strategy for extending our results from local homeomorphisms to attractors semiconjugated to them consists in proving that the Gibbs property is preserved under projection and liftability. This problem, in the symbolic context, has attracted considerable attention, see~\cite{CU03,CU11,FO18,Kemp11,PK11, Verb10, Yaya16, Yoo10}.  


An interesting application of our main results is that the weak Gibbs measures constructed in these settings minimizes the free energy of the system. More precisely, let $T: M \to M$ be a continuous map  defined on a compact metric space $M$ and let $\phi: M\to \mathbb{R}$ be a continuous function. Denoting the set of $T$-invariant probability measures by $\mathbb P_T(X)$, we say that $\mu_{\phi}\in \mathbb P_T(X)$ is an \textit{equilibrium state} for $(T,\phi)$ if it satisfies the variational principle
$$h_{\mu_{\phi}}(T)+\int{\phi}\, d\mu_{\phi}=\sup_{\eta\in\mathbb P_T(X)} \left\{ h_{\eta} (T) + \int \phi \, d\eta\right\},$$ where $h_\eta(T)$ is the metric entropy of $\eta$. We refer the reader to \cite{W} for a proof of the variational principle.

A fundamental question that naturally arise in the thermodynamic formalism is: {\textit {When is an equilibrium state a Gibbs measure?}} In the uniformly hyperbolic setting, the answer is well understood - the equilibrium state is unique and satisfies the classical Gibbs property, see~\cite{Bow75}. However, beyond the hyperbolic framework, particularly in systems with non-uniform expansion or partial hyperbolicity, the situation becomes significantly more subtle. Despite notable advances in recent years~\cite{ARS18, CN, CT, FO, RV17, RS17, RS16}, the characterization and existence of Gibbs-type equilibrium states in such settings remain challenging and far from complete.

In this work, we address this problem by proving that the weak Gibbs measure obtained through our construction - both for local homeomorphisms and for attractors semiconjugated to them - is in fact the unique equilibrium state for the system. This result provides a generalization of the classical theory beyond the class of non-uniformly hyperbolic dynamical systems. Furthermore, exploiting the weak Gibbs property, we also establish a large deviations principle for the equilibrium state.

The study of the Large Deviations Principle (LDP) plays a central role in understanding the fluctuation behavior of dynamical systems beyond the typical or average behavior described by ergodic theorems. It provides quantitative estimates for the exponential rate at which the probabilities of deviations from expected values decay. In the classical thermodynamic formalism, the LDP is closely linked to the Gibbs property, as Gibbs measures provide exponential control over the  dynamical balls in terms of the Birkhoff sum of the potential.

Since the nineties, the theory of large deviations has attracted significant attention from many authors. In~\cite{Ki90, KiNew90, You90}, Kifer, Newhouse, and Young established large deviation principles for equilibrium states associated to H\"older continuous potentials in the setting of uniformly hyperbolic transformations.  In~\cite{MN08}, Melbourne and Nicol obtained large deviation estimates for non-uniformly hyperbolic systems. For shift with countably many symbols, large deviation results were proved by Yuri (see~\cite{Yuri}).
In the setting of non-uniformly expanding maps, Ara\'ujo and Pac\'ifico~\cite{AP06} obtained upper bounds for the deviation sets of physical measures. In~\cite{Var12}, Varandas derived large deviation estimates for weak Gibbs measures associated to non-uniformly expanding maps satisfying the non-uniform specification property. Later, Bomfim and Varandas~\cite{BV19} introduced the gluing orbit property, a condition weaker than non-uniform specification, and proved large deviation results for semiflows satisfying this property.
In ~\cite{CFV19}, Cruz, Ferreira, and Varandas obtained large deviation upper bounds for the SRB measure associated to a class of partially hyperbolic endomorphisms.

This work also contributes to the theory of large deviations in a topological setting that requires no form of expansion - not even in a non-uniform sense - thus offering an extension of large deviation principles to dynamical systems beyond the realm of hyperbolicity.
Building upon the techniques developed by Young~\cite{You90}, we establish upper bounds for large deviations with respect to deviation sets associated with weak Gibbs measures constructed within our framework. Additionally, we derive large deviation lower bounds over the set of ergodic measures. Using the non-uniform gluing property introduced in~\cite{BV19}, we further extend these lower bounds to hold over the full set of invariant measures.

This paper is organized as follows. In Section~\ref{results}, we define a weak Gibbs measure and state our main results. Section~\ref{exemplos} presents some examples within our setting. In Section~\ref{toppressure}, we recall some properties of the relative topological pressure for non-compact sets. Section~\ref{reference measure} is devoted to the construction of a (non-invariant) reference measure that satisfies a weak Gibbs property. The existence and uniqueness of an ergodic weak Gibbs measure for local homeomorphisms satisfying our assumptions are established in Section~\ref{weak}. In Section~\ref{atrator}, we address the problem of liftability and projection of weak Gibbs measures. Finally, in Section~\ref{LD}, we establish a large deviation principle for the weak Gibbs measures constructed in our framework.
\section{Statement of results}\label{results}


Consider $M$ a compact metric space. Let $f : M \rightarrow M$ be a local homeomorphism with dense preorbit $\{f^{-j}(x)\}_{j\geq0}$, for every $x\in M$. Suppose the existence of a subset $\cG\subset M$ where the points have times in which the dynamics sends the dynamical ball homeomorphically onto the open ball, i.e., for some $\delta_0>0$ and each $x\in \cG$ there exist a sequence $(n_j(x))_j$ of natural numbers such that
\begin{equation*}\tag{*} \label{estrela}
f^{n_j(x)}(B_{\varepsilon}(x,n_j(x)))=B(f^{n_j(x)}(x),\vep)
\end{equation*}
homeomorphically for all $j\in\mathbb{N}$ and $0<\varepsilon<\delta_0,$ where $B_{\varepsilon}(x,n_j(x))$ is the dynamical ball of $x$ and $B(f^{n_j(x)}(x),\vep)$ is the open ball around $f^{n_j(x)}(x)$; for the definition of dynamical ball, see Section~\ref{toppressure}. 
 	

We also assume that the points of $\cG$ have positive frequency of times with the property (\ref{estrela}) i.e. there exists $\theta>0$, that depends only on $f$, such that
\begin{equation*}\tag{H1}\label{H1}
\cG:=\left\{x \in M: \limsup_{n\rightarrow +\infty}\frac{1}{n}\#\{1\leq j\leq n\, ;\, f^{j(x)}\,\, \mbox{satisfies (\ref{estrela})}\}\geq \theta \right\}.
\end{equation*}
Notice that $\cG$ is a positively invariant set ($f(\cG)\subset \cG$). 

Let $\varphi:M\rightarrow \R$ be a continuous function. Suppose that $\varphi$ satisfies the Bowen property on the set $\cG$: there exist $V>0$ and $\varepsilon>0$ such that 
\begin{equation*}\tag{H2}\label{H2}
\sup_{y, z\in B_\vep(x,n_{j}(x))}\left\{|\sum_{i=0}^{n_{j}(x)}\varphi(f^i(y))-\varphi(f^i(z))|\right\}\leq V.
\end{equation*}
for every $x\in \cG$ and $n_{j}(x)$ satisfying~(\ref{estrela}). 

We also assume that the topological pressure of $\varphi$ is equal to the relative pressure of $\varphi$ on the set $\cG$ i.e. 
\begin{equation*}\tag{H3}\label{H3}
P_{f}(\varphi, \cG^{c})<P_{f}(\varphi, \cG)=P_{f}(\varphi).
\end{equation*}
For the definition of topological pressure relative to a set, see Section~\ref{toppressure}. In Proposition~\ref{pr.openess}, we will show that the set of functions satisfying Condition~(\ref{H3}) is open in the $C^0$- topology. 

Let $\mathbb{P}(M)$ be the space of probability measures defined in the Borel sets of $M.$ We say that $\mu\in\mathbb{P}(M)$ is a \emph{weak Gibbs measure} for $(f, \varphi)$ if for some constants $P \in \R$ and $\delta> 0$ the following holds: for all $0<\varepsilon\leq\delta$ there exists $K=K(\varepsilon)>0$ so that for $\mu$-almost every $x\in M$ there exists a sequence $(n_k)_k=(n_k(x))_k$ such that
\begin{equation*}\tag{**} \label{2estrela}
  K^{-1} \leq  \frac{\mu(B_\varepsilon(x, n_{k}))}{\exp(S_{n_{k}}\varphi(y) - n_{k}P)}  \leq K,
	\end{equation*}
for all $y\in B_\varepsilon(x, n_k)$ and every $k\geq 1$.

\begin{remark}\label{property hyp times}	The sequence $(n_k(x))_k$ where the measure $\mu$ has the Gibbs property~(\ref{2estrela}) is called the Gibbs times of $x$. When the sequence $(n_k(x))_k$ coincides with the set of natural numbers $\mathbb{N}$ then $\mu$ is called a Gibbs measure in the sense of Bowen~\cite{Bow75}. In Section~\ref{reference measure} we will show that any time of $x$ satisfying~(\ref{estrela}) is a Gibbs time for $x\in\cG.$
\end{remark} 

Note that the definition of a weak Gibbs measure depends on the constant $P$. In our main result, we establish the existence of a unique ergodic invariant weak Gibbs measure for $(f, \varphi)$, and we show that the constant $P$ is determined by the relative pressure of the set $\cG.$

\begin{maintheorem}\label{Gibbs base} 
Consider $f:M\to M$ a local homeomorphism satisfying~(\ref{H1}) and ${\varphi}: M \to \mathbb{R}$ a continuous potential satisfying~(\ref{H2}) and (\ref{H3}). Then there exists a unique ergodic weak Gibbs measure $\mu_{\varphi}$ for $(f, {\varphi})$. Moreover, the relative topological pressure of $\cG$ is equal to $P$.
\end{maintheorem}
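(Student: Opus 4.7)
The plan is to leverage the reference measure constructed in Section~\ref{reference measure}. By Remark~\ref{property hyp times}, that (non-invariant) measure $\nu$ already satisfies~(\ref{2estrela}) with constant $P = P_f(\varphi,\cG) = P_f(\varphi)$ at every time $n_j(x)$ given by~(\ref{estrela}), for every $x\in \cG$. From $\nu$ I would produce an invariant candidate by Krylov--Bogolyubov averaging: form $\mu_N = \frac{1}{N}\sum_{k=0}^{N-1} f^k_*\nu$ and extract a weak-$*$ accumulation point $\mu$, which is automatically $f$-invariant by construction.

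The next step is to promote the weak Gibbs property from $\nu$ to $\mu$. I would first check that $\mu(\cG)=1$: the positive $f$-invariance of $\cG$ noted after~(\ref{H1}) together with the pressure gap in~(\ref{H3}) should force any ergodic component of $\mu$ to live in $\cG$, since a component supported in $\cG^c$ would have free energy bounded by $P_f(\varphi,\cG^c)<P_f(\varphi)$, contradicting the Gibbs lower bound inherited from $\nu$. The geometric heart of the argument is property~(\ref{estrela}): at a good time $n=n_j(x)$ with $x\in \cG$, the map $f^{n}$ is a homeomorphism $B_\vep(x,n)\to B(f^n(x),\vep)$. This allows the mass $f^k_*\nu(B_\vep(x,n))$ to be rewritten, via the Jacobian-type relation coming from Section~\ref{reference measure}, as an integral of $e^{S_n\varphi-nP}$ against a pulled-back measure; the Bowen estimate~(\ref{H2}) then converts this into sharp two-sided bounds that are uniform in $k$. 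Averaging over $k$ and passing to the Cesaro limit preserves the comparison and yields~(\ref{2estrela}) for $\mu$ along the subsequence $(n_j(x))$.

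For ergodicity and uniqueness, suppose $\mu_1,\mu_2$ are ergodic weak Gibbs measures for $(f,\varphi)$ with the same constant $P$. By the free-energy argument above both give full mass to $\cG$, and Birkhoff provides a point $x\in\cG$ generic for both and for~(\ref{H1}). Along a common Gibbs time $n_k$, the ratio $\mu_1(B_\vep(x,n_k))/\mu_2(B_\vep(x,n_k))$ lies in a bounded interval independent of $k$, so the two measures are mutually absolutely continuous with uniformly bounded density on a set of full measure; ergodicity then forces $\mu_1=\mu_2$. Applying the same comparison to any two ergodic components of the invariant measure produced above shows that $\mu$ itself is ergodic, completing the proof.

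The main obstacle I anticipate is the transfer step from $\nu$ to $\mu$: dynamical balls $B_\vep(x,n)$ shrink for large $n$, so Cesaro averaging could a priori smear the sharp exponential Gibbs bounds. What rescues the argument is precisely the good-time structure of~(\ref{estrela}): since $f^n$ is a homeomorphism on $B_\vep(x,n)$, each pushforward $f^k_*\nu$ can be compared to $\nu$ by an exact Jacobian-type identity rather than a mere inequality, with all distortion absorbed in the single constant $V$ from~(\ref{H2}). The delicate book-keeping point will be to ensure that the set of $x\in\cG$ at which the $\mu$-Gibbs estimates hold indeed has $\mu$-full measure, which is where the positive density condition built into the definition of $\cG$ plays its essential role.
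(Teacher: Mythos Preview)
Your overall strategy --- start from the reference measure $\nu$ of Section~\ref{reference measure}, average to get an invariant measure, and transfer the Gibbs property --- matches the paper's, but two of your three main steps have genuine gaps.

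\textbf{Transfer of the Gibbs property.} You propose to estimate $f^k_*\nu(B_\vep(x,n))$ for arbitrary $k$ using that $n$ is a good time for $x$. But $f^k_*\nu(B_\vep(x,n)) = \nu(f^{-k}(B_\vep(x,n)))$, and the good-time structure at $n$ says nothing about the geometry of the $k$-preimages or about distortion along those branches; the Jacobian identity you invoke controls $\nu(f^n(A))$ in terms of $\nu(A)$, not $\nu(f^{-k}(A))$. There is no reason the two-sided Gibbs bound survives uniformly in $k$, and hence no reason it passes to the Ces\`aro limit. The paper avoids this entirely: instead of pushing forward $\nu$ itself, it pushes forward the restrictions $\nu|_{\cG_j}$ by $f^j$ (so the time of the pushforward \emph{is} a Gibbs time), proves that the resulting sequence $\eta_n$ has density bounded above with respect to $\nu$ (Lemma~\ref{jacobiano}), extracts an absolutely continuous component $\mu_\varphi\ll\nu$ of the limit, and then shows $d\mu_\varphi/d\nu$ is bounded away from zero and infinity. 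Equivalence of $\mu_\varphi$ and $\nu$ then transfers the Gibbs property of $\nu$ (Lemma~\ref{conforme}) to $\mu_\varphi$ trivially. The restriction to Gibbs-time sets before averaging is not a technicality; it is what makes the density bound work.

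\textbf{Uniqueness.} Your argument picks ``a point $x\in\cG$ generic for both'' ergodic measures $\mu_1,\mu_2$. But distinct ergodic measures have disjoint basins of generic points, so no such $x$ exists. Even dropping genericity, the definition of weak Gibbs only guarantees the bounds $\mu_i$-almost everywhere, and if $\mu_1\perp\mu_2$ these full-measure sets can be disjoint; you cannot conclude bounded ratio of dynamical balls at any common point. The paper's uniqueness argument (Proposition~\ref{acim}) is topological rather than measure-theoretic: it uses Lemma~\ref{disco} to find topological discs inside each basin modulo $\nu$-null sets, and then the density of preorbits forces $f^n(\Delta_1)\cap\Delta_2\neq\emptyset$, contradicting disjointness of the basins.

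Your argument that $\mu(\cG)=1$ via the pressure gap is essentially correct and is indeed how the paper handles $\nu(\cG)=1$ in Proposition~\ref{zooming measure}.
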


As a first application of our main result, we show that the weak Gibbs measure constructed in our setting is the unique equilibrium state of the system. The problem of existence and uniqueness of equilibrium states for certain classes of non-uniformly expanding maps and H\"older continuous potentials has been studied extensively; see, for instance,~\cite{OV08, RV17, RS16, VV10}. In our second result, we obtain the uniqueness of the equilibrium state in a broader setting that includes a wider class of local homeomorphisms and continuous potentials, thereby extending the scope of the aforementioned results.

\begin{maintheorem}\label{ee base}
Let $f:M\to M$ be a local homeomorphism satisfying~(\ref{H1}) and let ${\varphi}: M \to \mathbb{R}$ be a continuous potential satisfying~(\ref{H2}) and (\ref{H3}). Suppose that $\cG$ admits a generating partition. Then the unique ergodic weak Gibbs measure $\mu_{\varphi}$ is the unique equilibrium state of $(f, \varphi).$
\end{maintheorem}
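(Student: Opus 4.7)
The plan is to show in turn that $\mu_{\varphi}$ is an equilibrium state and that it is the only one. Both assertions rely on coupling the weak Gibbs property~(\ref{2estrela}) delivered by Theorem~A with Shannon--McMillan--Breiman applied to the generating partition $\cP$ of $\cG$, which collapses the gap between measures of refinement atoms and measures of dynamical balls.

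\textbf{Step 1 (existence).} Theorem~A gives ergodicity of $\mu_{\varphi}$, together with $\mu_{\varphi}(\cG)=1$ and~(\ref{2estrela}) holding with constant $P=P_{f}(\varphi)$. Set $\cP^{n}(x):=\bigvee_{i=0}^{n-1}f^{-i}\cP$. Since $\cP$ is generating, Kolmogorov--Sinai yields $h_{\mu_{\varphi}}(f)=h_{\mu_{\varphi}}(f,\cP)$, and Shannon--McMillan--Breiman gives
$$h_{\mu_{\varphi}}(f)=\lim_{n\to\infty}-\frac{1}{n}\log\mu_{\varphi}(\cP^{n}(x))\qquad\text{for $\mu_{\varphi}$-a.e.\ $x\in\cG$}.$$
Choosing $\cP$ of diameter less than $\varepsilon$, the atom $\cP^{n_{k}}(x)$ is contained in $B_{\varepsilon}(x,n_{k})$, while~(\ref{estrela}) at Gibbs times $n_{k}$ provides a reverse inclusion $B_{\varepsilon'}(x,n_{k})\subset \cP^{n_{k}}(x)$ for a suitable $\varepsilon'<\varepsilon$. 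Substituting the weak Gibbs bound~(\ref{2estrela}) for both $B_{\varepsilon}(x,n_{k})$ and $B_{\varepsilon'}(x,n_{k})$ (with constants depending only on the radius) and using Birkhoff's theorem to replace $\frac{1}{n_{k}}S_{n_{k}}\varphi(x)$ by $\int\varphi\,d\mu_{\varphi}$ in the limit, we obtain
$$h_{\mu_{\varphi}}(f)+\int\varphi\,d\mu_{\varphi}=P=P_{f}(\varphi).$$

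\textbf{Step 2 (uniqueness).} Let $\nu$ be any ergodic equilibrium state. Because $\cG$ is positively $f$-invariant and $\nu$ is ergodic, $\nu(\cG)\in\{0,1\}$; the value $0$ would imply, via the variational principle for the relative pressure, that $h_{\nu}(f)+\int\varphi\,d\nu\le P_{f}(\varphi,\cG^{c})<P_{f}(\varphi)$ by~(\ref{H3}), contradicting the equilibrium property. Hence $\nu(\cG)=1$. Applying Shannon--McMillan--Breiman to $\nu$ with the same generating partition $\cP$, combined with the non-negativity of the relative entropy
$$\sum_{Q\in\cP^{n_{k}}}\nu(Q)\log\frac{\nu(Q)}{\mu_{\varphi}(Q)}\ge 0$$
and the Gibbs estimate~(\ref{2estrela}) on the atoms $\cP^{n_{k}}(x)$, the equality $h_{\nu}+\int\varphi\,d\nu=P$ forces the asymptotic rates on both sides to coincide. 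A covering argument at Gibbs times, driven by~(\ref{H1}) and~(\ref{H2}), then shows that $\nu$ itself satisfies~(\ref{2estrela}) with the same constant $P$, and the uniqueness statement of Theorem~A yields $\nu=\mu_{\varphi}$.

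\textbf{Main obstacle.} The technically delicate point is transferring the weak Gibbs property from $\mu_{\varphi}$ to the hypothetical equilibrium state $\nu$. Since~(\ref{2estrela}) holds only along the sparse sequence of Gibbs times and with a uniform constant $K$ that depends on $\varepsilon$ but not on $x$, a naive appeal to Brin--Katok would only provide sub-exponential control. Overcoming this requires careful use of the positive frequency of Gibbs times guaranteed by~(\ref{H1}), the Bowen property~(\ref{H2}) which keeps the Birkhoff sum essentially constant on any dynamical ball at such a time, and the generating partition to convert set-theoretic comparisons between $\cP^{n_{k}}(x)$ and $B_{\varepsilon}(x,n_{k})$ into matching measure estimates along Gibbs times.
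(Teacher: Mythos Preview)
Your existence argument has a gap: the reverse inclusion $B_{\varepsilon'}(x,n_{k})\subset \cP^{n_{k}}(x)$ does not follow from~(\ref{estrela}). Property~(\ref{estrela}) tells you that $f^{n_{k}}$ maps the dynamical ball homeomorphically onto an honest ball, but it says nothing about how the dynamical ball sits relative to the atoms of an arbitrary generating partition. The paper sidesteps this entirely by applying Brin--Katok's local entropy formula along the subsequence of Gibbs times, which gives $h_{\mu_{\varphi}}(f)=-\lim\frac{1}{n_{k}}\log\mu_{\varphi}(B_{\varepsilon}(x,n_{k}))$ directly, without ever comparing dynamical balls to partition atoms.

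The uniqueness argument has a more serious problem, and your ``main obstacle'' paragraph correctly identifies it without resolving it. There is no mechanism by which (\ref{H1}), (\ref{H2}) and the equilibrium identity $h_{\eta}+\int\varphi\,d\eta=P$ force the competing measure $\eta$ to satisfy the weak Gibbs bound~(\ref{2estrela}); the Gibbs property for $\mu_{\varphi}$ comes from its construction as $h\,\nu$ with $\nu$ an eigenmeasure of $\mathcal{L}_{\varphi}^{*}$, and nothing in your outline produces a comparable structure for $\eta$. The relative-entropy inequality you write down is true but does not bridge this gap. The paper's route is entirely different: it takes the partition $\xi=\bigvee_{n\ge 0}f^{n}\tau$, disintegrates both $\eta$ and $\mu_{\varphi}$ over $\xi$ \`a la Rokhlin, and uses the explicit Jacobian $J_{\nu}f=\lambda e^{-\varphi}$ together with $\mathcal{L}_{\varphi}h=\lambda h$ to compute $-\int\log\mu_{x}(f^{-n}\xi(x))\,d\eta=n\,h_{\eta}(f)$. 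Comparing this with the corresponding expression for $\eta_{x}$ and applying Jensen's inequality to $\log$ forces $\eta=\mu_{\varphi}$ on each $\sigma$-algebra generated by $f^{-n}\xi$. This is the classical Ledrappier-type argument; it never asks $\eta$ to be Gibbs, only to be an equilibrium state supported on $\cG$.
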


In this setting, we also establish a large deviation principle for the weak Gibbs measure $\mu_{\varphi}$, assuming that the sequence of Gibbs times $(n_k(x))_k$ is non-lacunar; that is, it satisfies $\displaystyle\lim_{k\rightarrow +\infty}\frac{n_{k+1}(x)-n_k(x)}{n_k(x)}=0,$ for every $x\in \cG.$
In particular, when the sequence of Gibbs times $(n_k(x))_k$ is syndetic in $\cG$, meaning there exists  $C>0$ such that $n_{k+1}(x)-n_k(x)\le C$ for every $x\in \cG$ and $k\in\mathbb{N}$, the lower bound for the deviation sets can be estimated over the set of invariant measures that give full mass to $\cG$ . 

\begin{maintheorem}\label{th ld1}
Let $\mu_{\varphi}$ be the weak Gibbs measure of $(f,\varphi)$ as in Theorem~\ref{Gibbs base}. Suppose that the sequence of Gibbs times of $x\in\cG$ is non-lacunar. For any continuous function $\psi:M \rightarrow \R$ and $c\in \R$, we have for every 
$\beta > 0$ small the following upper bound
$$
\limsup_{n \to +\infty} \frac{1}{n} \log \mu_{\varphi}\left(\{x \in M: \frac{1}{n}\sum_{j=0}^{n-1}\psi(f^j(x))\geq c\}\right)
\leq 
\max \{\cE(\beta) , 
I(c)+\beta\},
$$
where $$
{\mathcal E}(\beta)=\limsup_{n\to+\infty} \frac1n \log \mu_{\varphi}\Big(x\in \cG: n_{i+1}(x)-n_i(x)>\frac{\beta n}{2(\sup|\varphi|+P_f(\varphi))} \Big)
$$
for $n_i(x), n_{i+1}(x)$ consecutive Gibbs times of $x\in\mathcal{G}$ such that $n_{i}\leq n\leq n_{i+1}$
and 
$$
I(c)= \sup \{ h_\eta(f) + \int \varphi d\eta -P_f(\varphi)\},
$$
with the supremum taken over all $f$-invariant probability measures $\eta$ such that $\eta(\cG)=1$ and $\int \psi d \eta>c$.
Moreover, it holds the lower bound
$$
\liminf_{n\rightarrow +\infty}\frac{1}{n}\log \mu_\varphi\left(\{x \in M: \frac{1}{n}\sum_{j=0}^{n-1}\psi(f^j(x))>c\} \right)\ge h_{\eta}(f)+\!\int\varphi d\eta -P_f(\varphi),
$$
for any ergodic probability measure $\eta$ such that $\eta(\cG)=1$ and $\int \psi d \eta>c$. 

In addition, if the sequence $(n_k(x))_k$ is syndetic in $\cG$, then the lower bound stated above can be taken over all invariant measures $\eta$ satisfying $\eta(\cG)=1$ and $\int \psi d \eta>c$. 
\end{maintheorem}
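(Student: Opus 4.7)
The strategy follows Young's approach to large deviations for Gibbs measures~\cite{You90}, adapted by Varandas~\cite{Var12} and Bomfim--Varandas~\cite{BV19} to the weak Gibbs setting. The two essential inputs are the weak Gibbs property of $\mu_\varphi$ from Theorem~\ref{Gibbs base} and its identification as the unique equilibrium state from Theorem~\ref{ee base}; the non-lacunar hypothesis is what allows the fixed horizon $n$ in the deviation event to be synchronized with the sparse Gibbs times $n_k(x)$ at which the Gibbs bounds hold.

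For the upper bound, I would set $D_n:=\{x\in M:\frac{1}{n}S_n\psi(x)\ge c\}$ and split $D_n=(D_n\setminus B_n)\cup(D_n\cap B_n)$, where $B_n$ denotes the set of points in $\cG$ whose consecutive Gibbs times $n_i(x),n_{i+1}(x)$ flanking $n$ satisfy $n_{i+1}(x)-n_i(x)>\beta n/(2(\sup|\varphi|+P_f(\varphi)))$. Hypothesis~(\ref{H3}) forces $\mu_\varphi(\cG^c)=0$, so the long-gap contribution is bounded directly by $\cE(\beta)$. On $D_n\setminus B_n$ every $x$ admits a Gibbs time $m=n_i(x)$ with $|n-m|\le \beta n/(2(\sup|\varphi|+P_f(\varphi)))$; the Bowen-type control~(\ref{H2}) on $\varphi$ and the uniform continuity of $\psi$ then make both $|S_n\varphi(x)-S_m\varphi(x)|$ and $|S_n\psi(x)-S_m\psi(x)|$ of order $\beta n$, so $\frac{1}{m}S_m\psi(x)\ge c-O(\beta)$. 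Covering $D_n\setminus B_n$ by dynamical balls $B_\vep(x,m)$ at such Gibbs times and applying the weak Gibbs upper bound $\mu_\varphi(B_\vep(x,m))\le K\exp(S_m\varphi(x)-mP_f(\varphi))$, grouping the centers according to the invariant measure $\eta$ their empirical measures approximate, and bounding the cardinality of each cluster by $\exp(m(h_\eta(f)+o(1)))$ via the variational principle, produces the bound $\exp(n(I(c)+\beta+o(1)))$.

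For the lower bound, fix an ergodic $\eta$ with $\eta(\cG)=1$ and $\int\psi\,d\eta>c$. Birkhoff's theorem secures $\frac{1}{n}S_n\psi(x)>c$ for all large $n$ on a set of full $\eta$-measure, and the Brin--Katok formula gives $-\frac{1}{n}\log\eta(B_\vep(x,n))\to h_\eta(f)$; together these produce an $(n,\vep)$-separated subset of the deviation set of cardinality at least $\exp(n(h_\eta(f)-o(1)))$. For each such center $x$, non-lacunarity provides a Gibbs time $n_k=n_k(x)\ge n$ with $n_k-n=o(n)$, so $B_\vep(x,n_k)\subset B_\vep(x,n)$ and the weak Gibbs lower bound together with the ergodic theorem applied to $\varphi$ yields $\mu_\varphi(B_\vep(x,n))\ge K^{-1}\exp(S_{n_k}\varphi(x)-n_k P_f(\varphi))=\exp(n(\int\varphi\,d\eta-P_f(\varphi)-o(1)))$. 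Since the $\vep$-balls around $(n,2\vep)$-separated centers are disjoint, summing yields the desired lower bound $\exp(n(h_\eta(f)+\int\varphi\,d\eta-P_f(\varphi)-o(1)))$ for $\mu_\varphi(D_n)$.

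Finally, to upgrade the lower bound from ergodic to arbitrary invariant measures under the syndetic hypothesis, I would invoke the non-uniform gluing orbit property from Bomfim--Varandas~\cite{BV19}: the uniform bound on gaps between consecutive Gibbs times supplies uniform matching times, so any invariant $\eta$ with $\eta(\cG)=1$ can be approximated by convex combinations of ergodic measures whose generic orbits are concatenated into pseudo-orbits realizing the prescribed Birkhoff integrals within $o(1)$; the preceding paragraph then applies. The main obstacle is the upper bound, where one must reconcile the fixed deviation horizon $n$ with the sparse sequence $\{n_k(x)\}$ while still extracting sharp pressure estimates. The non-lacunar hypothesis is precisely the quantitative tool that keeps the discrepancy $|n-n_i(x)|$ a vanishing fraction of $n$, thereby making the Bowen control on $S_n\varphi$ and $S_n\psi$ compatible with the $\beta$-tolerance appearing in the statement.
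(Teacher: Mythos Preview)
Your strategy is correct and aligns with the paper's: Young's large-deviations scheme adapted to the weak Gibbs setting, with the non-lacunar hypothesis bridging the gap between the deviation horizon $n$ and the sparse Gibbs times. The lower-bound arguments (Katok/Brin--Katok entropy count plus weak Gibbs lower estimate for the ergodic case; approximation by finitely many ergodic components plus a gluing-orbit concatenation in the syndetic case) are essentially identical to the paper's.

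The one genuine technical difference is in the upper bound. You propose to cover the good part of the deviation set by dynamical balls $B_\vep(x,m)$ at the \emph{variable} Gibbs times $m=n_i(x)$, then control $|S_n\varphi-S_m\varphi|$ and $|S_n\psi-S_m\psi|$ separately. The paper instead establishes a preliminary corollary (their Corollary~\ref{gibbs_n}): the weak Gibbs inequality extends from Gibbs times to \emph{every} $n$, at the price of a multiplicative error $K e^{\alpha(n_{i+1}(x)-n_i(x))}$ with $\alpha=\sup|\varphi|+P_f(\varphi)$. On the good set this error is at most $e^{\beta n}$, so one can work directly with a maximal $(n,\vep)$-separated set $E_n$ at the \emph{fixed} depth $n$. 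This buys a cleaner counting step: the paper then runs the standard Misiurewicz-style construction $\sigma_n=Z_n^{-1}\sum_{x\in E_n}e^{S_n\varphi(x)-nP_f(\varphi)}\delta_x$, $\eta_n=\frac{1}{n}\sum_{j=0}^{n-1}f^j_*\sigma_n$, and extracts a single weak$^*$ limit $\eta$ satisfying $\int\psi\,d\eta\ge c$ and $\limsup\frac{1}{n}\log Z_n\le h_\eta(f)+\int\varphi\,d\eta-P_f(\varphi)$. Your ``group centers by the invariant measure their empirical measures approximate'' is the same idea in spirit but is vaguer and, combined with variable-depth balls, would require an extra layer of bookkeeping (partitioning by the value of $m$) that the paper's route avoids.
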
 

The first large deviation estimate provides an upper bound on the speed of convergence of Birkhoff averages, characterized as the maximum of two exponential decay rates: one corresponding to the set of times ${\mathcal E}(\beta)$ where the Gibbsian property holds, and the other defined via a rate function $I(\cdot)$ which quantifies the deviation of measures to the equilibrium.

\subsubsection*{\bf Attracting Dynamics}\label{section attractor}
In the second part of this work, we study homeomorphisms semiconjugated to the dynamics of the first setting. As before, consider $f: M\to M$ a local homeomorphism with dense preorbit $\{f^{-j}(x)\}_{j\geq0}$ for every $x\in M$ satisfying condition~(\ref{H1}). Let $N$ be a compact metric space and let $F:N\to N$ be a homeomorphism onto its image. Suppose the existence of a continuous surjection $\pi: N\to M$ which conjugates $f$ and $F$, that is, $$\pi\circ F=f\circ \pi. $$ 
For $x\in M, $ we set $N_x=\pi^{-1}(x).$ Notice that $N_x$ is compact, $N=\bigcup_{x\in M}{N_x}$ and $F(N_x)\subset N_{f(x)}$. Assume that $N_x$ are local stable manifolds: there exists $0<\lambda<1$ such that 
$$d_N(F(\hat{y}), F(\hat{z}))\leq \lambda d_N(\hat{y}, \hat{z}),$$
for every $\hat{y}, \hat{z}\in N_x$ and all $x\in M,$ where $F_x$ is the restriction $F|_{N_x}$ and $d_N$ is the metric on $N$. 


Finally, we suppose the existence of $F$-invariant holonomies, that is,  given $\hat{x}, \hat{y} \in N$ for $x = \pi(\hat{x}), y=\pi(\hat{y})$ in $M$,  there exist holonomies $h_{x,y}:  N_x \to N_y$ and a constant $C\geq 1$ such that
$$\frac{1}{C}[d_M (x, y) + d_N(h_{x,y}(\hat{x}), \hat{y})]\leq d_N(\hat{x}, \hat{y}) \leq  C [d_M (x, y) + d_N(h_{x,y}(\hat{x}), \hat{y})],$$
where $d_M$ and $d_N$ are the metrics on $M$ and $N$ respectively. The holonomies are invariant by $F$ that is $$F(h_{x, y}(z))=
h_{f(x),f(y)}F(z).$$

Let $\cG\subset M$ as described in (\ref{H1}) and consider the set $\hat{\cG}:=\pi^{-1}(\cG)\subset N.$ For a continuous potential $\phi:N\rightarrow \R$ we suppose the Bowen property on the set $\hat{\cG}$: there exist $\varepsilon>0$ and $V>0$ such that 
\begin{equation*}\tag{H4}\label{H4}
\sup_{\hat{y}, \hat{z}\in B_\vep(\hat{x},n)}\left\{|\sum_{j=0}^{n}\phi(F^j(\hat{y}))-\phi(F^j(\hat{z}))|\right\}\leq V.
\end{equation*}
for every $\hat{x}\in \hat{\cG}$ and $n\in \mathbb{N}.$ Moreover, we assume that the topological pressure of $\phi$ is located on $\hat{\cG}$ i.e. 
\begin{equation*}\tag{H5}\label{H5}
P_{F}(\phi, \hat{\cG}^{c})<P_{F}(\phi, \hat{\cG})=P_{F}(\phi).
\end{equation*}

Addressing the problem of whether the Gibbs property is preserved under projection and liftability, we prove the existence of a unique ergodic weak Gibbs measure for the attractor. This measure is, in fact, obtained as the projection of the weak Gibbs measure provided by Theorem~\ref{Gibbs base}.


\begin{maintheorem}\label{theo.Gibbs} 
Let $F:N\to N$ be as described above and let ${\phi}: N \to \mathbb{R}$ be a H\"older continuous potential satisfying~(\ref{H4}) and (\ref{H5}). Then there exists a unique ergodic weak Gibbs measure $\mu_{\phi}$ associated to $(F, {\phi})$. 
\end{maintheorem}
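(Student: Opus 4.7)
The plan is to project the dynamics from the attractor $N$ down to the base $M$ via $\pi$, apply Theorem~\ref{Gibbs base} there, and then lift the resulting weak Gibbs measure back up through the $F$-invariant holonomy structure, checking that the Gibbs property is preserved in both directions.

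First, I would construct from $\phi$ a continuous potential $\varphi:M\to\R$ satisfying hypotheses~(\ref{H2}) and~(\ref{H3}) on the base. The key observation is that H\"older continuity of $\phi$, combined with the $\lambda$-contraction $d_N(F(\hat y),F(\hat z))\leq\lambda\,d_N(\hat y,\hat z)$ along stable fibers, makes $\phi$ cohomologous with a uniformly bounded coboundary to a function descending through $\pi$. Explicitly, choosing a measurable section $s:M\to N$ of $\pi$ and setting
$$u(\hat x):=\sum_{k=0}^{\infty}\bigl[\phi(F^k(\hat x))-\phi(F^k(s(\pi(\hat x))))\bigr],$$
the stable contraction ensures that the series converges uniformly, $u$ is bounded, and $\phi+u\circ F-u$ takes values depending only on $\pi(\hat x)$; this defines $\varphi:M\to\R$. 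The boundedness of $u$ lets one transfer~(\ref{H4}) into~(\ref{H2}), since Birkhoff sums $S_n^F\phi$ and $S_n^f(\varphi\circ\pi)$ differ by at most $2\|u\|_\infty$ along every orbit. The identity $\pi(\hat\cG)=\cG$ together with the invariance of topological pressure under continuous coboundaries then promotes~(\ref{H5}) into~(\ref{H3}).

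With~(\ref{H1}), (\ref{H2}), (\ref{H3}) in place, Theorem~\ref{Gibbs base} provides a unique ergodic weak Gibbs measure $\mu_{\varphi}$ on $M$ for $(f,\varphi)$ with Gibbs constant $P=P_f(\varphi)=P_F(\phi)$. To lift it, I would exploit the $F$-invariant holonomies: starting from an arbitrary measurable family $\{\mu^{0}_{\hat x}\}$ of probabilities supported on the fibers, iterate by $F_{*}$ and use the uniform stable contraction to show exponential convergence of these iterates, in a suitable Wasserstein sense, to a unique $F$-invariant fiber-conditional family $\{\mu_{\hat x}\}$; then $\mu_{\phi}$ is defined as the measure with marginal $\mu_{\varphi}$ on $M$ and these fiber conditionals.

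The weak Gibbs property for $\mu_{\phi}$ is verified at $\mu_{\phi}$-typical $\hat x\in\hat\cG$ by using the bi-Lipschitz holonomy comparison to match the $F$-dynamical ball $B_{\varepsilon}^{F}(\hat x,n_k)$ with a neighbourhood of the form $\pi^{-1}(B_{\varepsilon}^{f}(x,n_k))\cap W^{s}_{\varepsilon}(\hat x)$, and then using the cohomological identity $\phi=\varphi\circ\pi+u\circ F-u$ to transfer the Gibbs bound on $\mu_{\varphi}(B_{\varepsilon}^{f}(x,n_k))$ into one on $\mu_{\phi}(B_{\varepsilon}^{F}(\hat x,n_k))$, with the bounded coboundary absorbed into the constant $K$. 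Uniqueness and ergodicity of $\mu_{\phi}$ are inherited from Theorem~\ref{Gibbs base}: any ergodic weak Gibbs measure for $(F,\phi)$ projects by $\pi_{*}$ to an ergodic weak Gibbs measure for $(f,\varphi)$, hence to $\mu_{\varphi}$, and the rigidity of the invariant conditional family in the fibers forces the lift to coincide with $\mu_{\phi}$. The main obstacle I anticipate is establishing the weak Gibbs property \emph{along fibers}, that is, controlling $\mu_{\phi}(B_{\varepsilon}^{F}(\hat x,n_k))$ uniformly in $\hat x\in\pi^{-1}(x)$ rather than only in $\mu_{\phi}$-average, and doing so without any expansion on the base; a secondary subtlety is choosing the section $s$ so that $\varphi$ is globally continuous on $M$, for which the bi-Lipschitz invariant holonomies provide the essential transport mechanism.
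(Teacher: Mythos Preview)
Your proposal is correct and follows essentially the same route as the paper: reduce $\phi$ to a fiber-constant potential via the coboundary $u(\hat x)=\sum_{k\ge 0}[\phi(F^k\hat x)-\phi(F^k(s\circ\pi(\hat x)))]$, push it down to $\varphi$ on $M$, apply Theorem~\ref{Gibbs base}, and lift back using the stable contraction and the bi-Lipschitz holonomies to compare dynamical balls. Two minor remarks: the paper secures continuity of $\varphi$ by taking the section continuous from the outset (your ``secondary subtlety'' is handled by assumption rather than by a holonomy construction), and your anticipated ``main obstacle'' about fiber-uniform control dissolves once you use the identity $\mu_\phi(\pi^{-1}A)=\mu_\varphi(A)$ together with the two-sided sandwich $\pi\bigl(B^F_{C^{-1}\varepsilon}(\hat x,n)\bigr)\subset B^f_\varepsilon(x,n)\subset\pi\bigl(B^F_{C\tilde\varepsilon}(\hat x,n)\bigr)$, which transfers the Gibbs bounds directly without any disintegration argument.
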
  

In previous works, Alves, Ramos and Siqueira~\cite{ARS18}, Castro and Nascimento~\cite{CN}, Fisher and Oliveira~\cite {FO} studied attractors semiconjugated to certain classes of non-uniformly expanding maps and proved the uniqueness of equilibrium states. In this work, we generalize these results to the broader setting of attractors semiconjugated to local homeomorphisms satisfying~(\ref{H1}). Furthermore, we establish the Gibbs property and obtain a large deviations principle.

\begin{maintheorem}\label{theo finitude}
Let $F:N\to N$ be as described above and let ${\phi}: N \to \mathbb{R}$ be a H\"older continuous potential satisfying~(\ref{H4}) and (\ref{H5}). If the set $\cG=\pi(\hat{\cG})$  has  a generating partition then the unique ergodic weak Gibbs measure $\mu_{\phi}$ is the unique equilibrium state of $(F, \phi)$.
\end{maintheorem}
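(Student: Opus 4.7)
The plan is to deduce Theorem~\ref{theo finitude} by combining the existence of the unique ergodic weak Gibbs measure $\mu_\phi$ from Theorem~\ref{theo.Gibbs} with a projection/lifting argument that transfers uniqueness of the equilibrium state from the base dynamics $(f,\tilde\phi)$, where Theorem~\ref{ee base} applies, to the attractor dynamics $(F,\phi)$.

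First I would verify that $\mu_\phi$ is itself an equilibrium state. By Theorem~\ref{theo.Gibbs}, $\mu_\phi$ satisfies the weak Gibbs property with constant $P=P_F(\phi)$: for $\mu_\phi$-a.e.\ $\hat x$ there is a sequence $(n_k)$ of Gibbs times with
$$K^{-1}\le \frac{\mu_\phi\bigl(B_\varepsilon(\hat x,n_k)\bigr)}{\exp\bigl(S_{n_k}\phi(\hat x)-n_kP_F(\phi)\bigr)}\le K.$$
Taking logarithms, dividing by $n_k$ and letting $k\to\infty$, Birkhoff's theorem gives $S_{n_k}\phi(\hat x)/n_k\to\int\phi\,d\mu_\phi$, while the generating partition on $\cG$, transported to $\hat{\cG}$ via $\pi$ together with the fiber contraction, allows me to apply the Brin--Katok formula along the Gibbs-time subsequence to identify the other limit with $-h_{\mu_\phi}(F)$. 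This yields $h_{\mu_\phi}(F)+\int\phi\,d\mu_\phi=P_F(\phi)$.

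To reduce uniqueness to the base, I would exploit the fiber contraction and the $F$-invariant holonomies to produce a continuous potential $\tilde\phi:M\to\mathbb{R}$ and a bounded continuous $u:N\to\mathbb{R}$ with $\phi=\tilde\phi\circ\pi+u-u\circ F$: for $\hat x,\hat y$ in a common fiber, H\"older continuity of $\phi$ and $d_N(F^j\hat x,F^j\hat y)\le\lambda^j d_N(\hat x,\hat y)$ make the telescoping series defining $u$ absolutely convergent, and the holonomies transport the construction across fibers. I then need to check that $\tilde\phi$ satisfies (\ref{H2}) and (\ref{H3}) on $\cG$, which follows from (\ref{H4}), (\ref{H5}), boundedness of $u$ and continuity of $\pi$. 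For any $F$-invariant $\eta$, the projection $\mu=\pi_*\eta$ is $f$-invariant; the fiber contraction forces zero conditional entropy along fibers, yielding the Ledrappier--Walters identity $h_\eta(F)=h_\mu(f)$, and the cohomology gives $\int\phi\,d\eta=\int\tilde\phi\,d\mu$. Consequently $P_F(\phi)=P_f(\tilde\phi)$ and every equilibrium state $\eta$ for $(F,\phi)$ projects to an equilibrium state for $(f,\tilde\phi)$, which by Theorem~\ref{ee base} must be the weak Gibbs measure $\mu_{\tilde\phi}$; the lifting construction behind Theorem~\ref{theo.Gibbs} then forces $\eta=\mu_\phi$.

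The hard part will be the two technical inputs underlying this scheme: the entropy identity $h_\eta(F)=h_{\pi_*\eta}(f)$ for every $F$-invariant measure and, intertwined with it, the construction of a base potential $\tilde\phi$ with enough regularity that the pressure-localization hypothesis (\ref{H3}) descends cleanly from $\hat{\cG}$ to $\cG$. A Ledrappier--Walters type argument should go through using only the fiber contraction, but the non-uniform character of $\cG$ (which is merely positively invariant) requires care with Rokhlin-type disintegrations and with the strict inequality $P_f(\tilde\phi,\cG^c)<P_f(\tilde\phi)$, which must be inferred from $P_F(\phi,\hat{\cG}^c)<P_F(\phi)$ without picking up spurious contributions from the contracting fibers.
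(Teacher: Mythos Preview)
Your proposal is correct and follows essentially the same route as the paper: build a cohomologous potential constant on fibers to induce $\varphi$ on $M$, verify (\ref{H2}) and (\ref{H3}) for $\varphi$, use the Ledrappier--Walters formula together with $h_{\topp}(F,\pi^{-1}(x))=0$ to get the entropy identity, and then pull back uniqueness from Theorem~\ref{ee base}. The two ``hard parts'' you flag are handled more directly than you fear: the entropy identity is quoted as the Ledrappier--Walters theorem with vanishing fiber entropy, and the pressure descent $P_f(\varphi,\cG^c)<P_f(\varphi,\cG)$ comes from a clean equality $P_f(\varphi,\Lambda)=P_F(\bar\phi,\hat\Lambda)$ for every positively invariant $\Lambda$, obtained by sandwiching dynamical balls via the holonomy estimate, so no Rokhlin disintegration is needed there.
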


Finally, from the Gibbs property of the equilibrium state $\mu_{\phi}$, we derive a large deviations principle for the attractor. 

\begin{maintheorem}\label{th ld2}
Let $\mu_{\phi}$ be the weak Gibbs measure of $(F,\phi)$ as in Theorem~\ref{theo.Gibbs}. If the sequence of Gibbs times of $y\in\hat\cG=\pi^{-1}(\cG)$ is non-lacunar then for any continuous observable $\psi:M \rightarrow \R$ and $c\in \R$, we have for every 
$\beta > 0$ small the following upper bound
$$
\limsup_{n \to +\infty} \frac{1}{n} \log \mu_{\phi}\left(\{y \in N: \frac{1}{n}\sum_{j=0}^{n-1}\psi(F^j(y))\geq c\}\right)
\leq 
\max \{\cE(\beta) , 
I(c)+\beta\},
$$
where 
$$
{\mathcal E}(\beta)=\limsup_{n\to+\infty} \frac1n \log \mu_{\phi}\Big(y\in\hat{\cG}: n_{i+1}(y)-n_i(y)>\frac{\beta n}{2(\sup|\phi|+P_F(\phi))} \Big)
$$
for $n_i(y), n_{i+1}(y)$ consecutive Gibbs times of $y\in\hat{\mathcal{G}}$ such that $n_{i}\leq n\leq n_{i+1}$
and 
$$
I(c)= \sup \{ h_\eta(F) + \int \phi d\eta -P_{\phi}(F)\}
$$
with supremum taken over all  $F$-invariant probability measures $\eta$ such that $\eta(\hat\cG)=1$ and $\int \psi d \eta>c$.
Moreover, it holds the lower bound 
$$
\liminf_{n\rightarrow +\infty}\frac{1}{n}\log \mu_\phi\left(y \in N: \frac{1}{n}\sum_{j=0}^{n-1}\psi(F^j(x)>c \right)\ge h_{\eta}(F)+\!\int\varphi d\eta -P_F(\phi),
$$
for any ergodic probability measure $\eta$ such that $\eta(\hat\cG)=1$ and $\int \psi d \eta>c$.

In addition, if the sequence $(n_k(y))
_k$ is syndetic in $\hat\cG$, then the lower bound
stated above can be taken over all invariant measures $\eta$ satisfying $\eta(\hat\cG) = 1$ and $\int \psi d\eta > c.$


\end{maintheorem}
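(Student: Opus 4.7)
The plan is to reproduce the proof of Theorem~\ref{th ld1} in the attractor setting, since every ingredient used there has a direct analogue on $N$: Theorem~\ref{theo.Gibbs} supplies a weak Gibbs measure $\mu_{\phi}$ with constant $P_F(\phi)$, hypothesis~(\ref{H5}) concentrates the topological pressure on $\hat{\cG}=\pi^{-1}(\cG)$, Theorem~\ref{theo finitude} identifies $\mu_{\phi}$ as the unique equilibrium state (when a generating partition is available), and the non-lacunarity hypothesis is imposed directly on the Gibbs times of points in $\hat{\cG}$. Consequently the combinatorial/entropic argument translates with $(f,\varphi,M,\mu_{\varphi},\cG)$ replaced by $(F,\phi,N,\mu_{\phi},\hat{\cG})$, with no new appeal to any form of expansion of $F$ (which is contracting along the fibers $N_x$).

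For the upper bound I would fix $\beta>0$ small and split
$$A_n:=\Big\{y\in N:\frac{1}{n}\sum_{j=0}^{n-1}\psi(F^j y)\geq c\Big\}$$
into a ``bad'' subset on which the two consecutive Gibbs times straddling $n$ satisfy $n_{i+1}(y)-n_i(y)>\frac{\beta n}{2(\sup|\phi|+P_F(\phi))}$, and its complement. The first piece is controlled by $\exp(n\,\cE(\beta)+o(n))$ by definition. On the complement I select a Gibbs time $n_k(y)\in[n-\frac{\beta n}{2(\sup|\phi|+P_F(\phi))},n]$, apply the weak Gibbs inequality to bound $\mu_{\phi}(B_\vep(y,n_k))\leq K\exp(S_{n_k}\phi(y)-n_k P_F(\phi))$, and cover this complement by such dynamical balls. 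A maximal $(n,\vep)$-separated count bounds the number of balls through the relative topological pressure of $\hat{\cG}$; combining this with~(\ref{H5}) and the variational principle restricted to invariant measures $\eta$ with $\eta(\hat{\cG})=1$ and $\int\psi\,d\eta>c$ produces the exponent $I(c)+\beta$. Taking the maximum of the two contributions yields the announced estimate.

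For the lower bound with an ergodic $\eta$ satisfying $\eta(\hat{\cG})=1$ and $\int\psi\,d\eta>c$, I would follow the Brin--Katok route: pick a generic $y\in\hat{\cG}$ where the Birkhoff averages of $\phi$ and $\psi$ converge; by the weak Gibbs inequality $\mu_{\phi}(B_\vep(y,n))\geq K^{-1}\exp(n(\int\phi\,d\eta-P_F(\phi)-o(1)))$, while by Brin--Katok $\eta(B_\vep(y,n))\leq \exp(-n(h_\eta(F)-o(1)))$. Covering the deviation set by finitely many almost disjoint such dynamical balls, summing their $\mu_{\phi}$-measures, and estimating cardinality via $\eta$ yields the stated bound. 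To pass from ergodic to arbitrary invariant measures in the syndetic case I would invoke the non-uniform gluing orbit property of Bomfim--Varandas~\cite{BV19}: syndeticity of Gibbs times along $\hat{\cG}$ allows one to concatenate segments from an ergodic decomposition into orbit segments approximating a prescribed invariant measure in the weak-$*$ topology while keeping the Gibbs bounds uniform, after which the previous ergodic lower bound can be applied piecewise.

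The main technical hurdle, in my view, is the upper-bound step on the ``good'' subset: one must ensure that the $(n,\vep)$-separated count on points with dense Gibbs times is tight enough to access the variational principle restricted to measures giving full mass to $\hat{\cG}$, so that the exponent matches $P_F(\phi)$ exactly; this is precisely where~(\ref{H5}) is essential. A secondary subtlety is that $F$ is only a homeomorphism and contracts along the stable fibers $N_x$, so dynamical balls in $N$ are thin tubes along the stable direction; the holonomy estimates of the setup are used to compare $\mu_{\phi}$-measures of such tubes with their $\pi$-images in $M$ so that the Bowen property~(\ref{H4}) can be invoked to absorb the difference between Birkhoff sums of $\phi$ on different orbits through the same tube.
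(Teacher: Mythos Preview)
Your proposal is correct and matches the paper's approach: the paper's proof of Theorem~\ref{th ld2} is a one-paragraph remark that the argument for Theorem~\ref{th ld1} transfers verbatim to $(F,\phi,\mu_{\phi},\hat{\cG})$ once one has the weak Gibbs property (Theorem~\ref{theo.Gibbs}) and the non-uniform gluing property on $\hat{\cG}$ (established at the end of Subsection~\ref{gluing property} by lifting from $(f,\mu_{\varphi})$ via the inclusion $\pi^{-1}(B^f_{\vep}(x,n))\subset B^F_{C\tilde{\vep}}(\hat{x},n)$). Your sketch of the underlying bounds differs only in minor technical choices---Brin--Katok in place of Katok's covering-number entropy formula~\eqref{eq entropy-Katok} for the ergodic lower bound, and a direct pressure appeal in place of the paper's auxiliary measures $\sigma_n,\eta_n$ (built as in Walters' proof of the variational principle) for the upper bound---but the structure is the same.
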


The approach to extending our results from local homeomorphisms to attractors semiconjugated to them relies on showing that the Gibbs property is preserved under projection and liftability. In addition, to establish the large deviation principle, we prove that the non-uniform gluing property holds for both systems.


\section{Examples}\label{exemplos} 

We describe some examples of systems which satisfy the requirements of our results. We start by presenting a robust class of non-uniformly expanding maps introduced by Alves, Bonatti and Viana \cite{ABV00} and studied by  Oliveira and Viana \cite{OV08} and Varandas and Viana \cite{VV10}. 

\begin{example}\label{paulo} \normalfont{Let $f:M\rightarrow M$ a be $C^{1}$ local diffeomorphism defined on a compact manifold $M$. For  $\delta>0$ small and $\sigma<1$, consider  a covering $\mathcal Q=\left\{Q_{1},\dots, Q_{q}, Q_{q+1},\dots, Q_{s}\right\}$ of $M$ by domains of injectivity for $f$ and a  region $\textsl{A}\subset M$ satisfying:
		\begin{enumerate}
			\item[(1)]   $\|Df^{-1}(x)\|\leq 1+\delta$, for every $x\in\textsl{A}$;
			\item[(2)] $\|Df^{-1}(x)\|\leq \sigma $, for every $x\in M\setminus\textsl{A}$;
			\item[(3)]   $A$   can be covered by $q$ elements of the partition $\mathcal Q$ with $q<\deg(f).$
		\end{enumerate}
		In~\cite{OV08, VV10} it was showed the existence of a non-uniformly expanding set $H\subset M$ where the points $x\in H$ satisfy
		\begin{equation} \tag{NUE} \label{NUE}
			\limsup_{n\rightarrow+\infty}\frac{1}{n}\sum_{i=0}^{n-1}\log\|Df(f^{j}(x))^{-1}\|\leq-c<0.
		\end{equation}
The non-uniform expansion on the set $H$ was explored through the following concept introduced in \cite{Alves} and generalized in \cite{ABV00}. We say that $n$ is a \textit{hyperbolic time} for $x$ if 
$$\prod_{j=n-k}^{n-1}\|Df(f^{j}(x))^{-1}\|\leq e^{-ck/2},\quad\text{for all }1\leq k< n.$$

The property~(\ref{NUE}) above is enough to guarantee positive frequency of hyperbolic times for points in $H$. The next result shows that the iterates of a map at hyperbolic times behave locally as uniformly expanding maps; we refer the reader to \cite[Lemma~5.2 ]{ABV00} for the proof.
\begin{lemma}\label{distortion} There exist $\delta>0$ such that for every  $0<\varepsilon\le\delta$ and every hyperbolic time~$n$ for $x\in M$ the dynamic ball $B_{\varepsilon}(x,n)$ is mapped diffeomorphically under $f^{n}$ onto the ball $B(f^{n}(x),\varepsilon)$. Moreover, for $y, z\in B_{\varepsilon}(x,n)$ we have
$$
d(f^{n-j}(y), f^{n-j}(z))\leq e^{-cj/4}d(f^{n}(y), f^{n}(z))
$$ for each $1\le j\le n$;
\end{lemma}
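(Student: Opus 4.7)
The plan is to construct the inverse branch of $f^n$ restricted to the image ball $B(f^n(x),\varepsilon)$ via backward composition of local inverse branches of $f$ along the pseudo-orbit $f^{n-1}(x), f^{n-2}(x), \ldots, x$, and then read off both the homeomorphism claim and the contraction estimate from the properties of that construction. Once such a branch is built, its image is necessarily contained in the dynamical ball $B_\varepsilon(x,n)$, while injectivity of $f^n$ on this image follows from injectivity of each inverse branch.

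The preliminary step is to fix $\delta>0$ from two ingredients, using only that $M$ is compact and $f$ is a $C^1$ local diffeomorphism. First, there is an injectivity radius $\delta_1>0$ so that $f$ admits a well-defined $C^1$ inverse branch on every ball of radius $\delta_1$. Second, by uniform continuity of the map $y\mapsto\|Df(y)^{-1}\|$ on $M$, one can choose $\delta_2>0$ so that $d(y,y')\le C\delta_2$ implies $\|Df(y)^{-1}\|\le e^{c/4}\|Df(y')^{-1}\|$, where $C=\sup_M\|Df^{-1}\|$. Take $\delta=\min\{\delta_1,\delta_2/C\}$ and fix any $0<\varepsilon\le\delta$, any $x\in M$, and a hyperbolic time $n$ for $x$.

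The heart of the argument is a simultaneous backward induction on $k=0,1,\dots,n$. Let $\phi_k$ denote the local inverse branch of $f$ defined near $f^{n-k+1}(x)$ that sends $f^{n-k+1}(x)$ to $f^{n-k}(x)$, and set $g_0=\mathrm{id}$, $g_k=\phi_k\circ g_{k-1}$. I would establish inductively that
\begin{itemize}
\item[(a)] $g_k$ is well-defined on $B(f^n(x),\varepsilon)$ with image in $B(f^{n-k}(x),\varepsilon)$;
\item[(b)] $d(g_k(y),g_k(y'))\le e^{-ck/4}d(y,y')$ for all $y,y'\in B(f^n(x),\varepsilon)$.
\end{itemize}
The inductive step combines three facts: (a) for $k-1$ places the image of $g_{k-1}$ inside $B(f^{n-k+1}(x),\varepsilon)\subset B(f^{n-k+1}(x),\delta_1)$, hence in the domain of $\phi_k$; the choice of $\delta_2$ then yields $\|D\phi_k\|\le e^{c/4}\|Df(f^{n-k}(x))^{-1}\|$ on that image; and telescoping with the hyperbolic time inequality $\prod_{j=n-k}^{n-1}\|Df(f^j(x))^{-1}\|\le e^{-ck/2}$ gives (b) with an extra factor $e^{ck/4}\cdot e^{-ck/2}=e^{-ck/4}$. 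Since $g_k(f^n(x))=f^{n-k}(x)$, the contraction (b) immediately forces the image into $B(f^{n-k}(x),\varepsilon e^{-ck/4})\subset B(f^{n-k}(x),\varepsilon)$, closing (a).

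Applying the conclusion at $k=n$ gives an injective continuous map $g_n:B(f^n(x),\varepsilon)\to M$ with $f^n\circ g_n=\mathrm{id}$, and by construction $f^{n-j}(g_n(y))=g_j(y)\in B(f^{n-j}(x),\varepsilon)$ for every $0\le j\le n$, so $g_n(B(f^n(x),\varepsilon))\subseteq B_\varepsilon(x,n)$; the reverse inclusion is immediate because $f^n$ is injective on this dynamic ball (any two points mapping to the same image in $B(f^n(x),\varepsilon)$ would share the same $\phi_k$-preimages by continuity along the pseudo-orbit). This yields the homeomorphism onto $B(f^n(x),\varepsilon)$. Finally, substituting $z=g_n(y)$, $z'=g_n(y')$ into (b) for each $k=j$ rewrites exactly as $d(f^{n-j}(z),f^{n-j}(z'))\le e^{-cj/4}d(f^n(z),f^n(z'))$, which is the claimed backward contraction. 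The main technical subtlety is the circular appearance of the bootstrap between (a) and (b); however, because $e^{-ck/4}\le 1$ at every step, the contraction at stage $k$ automatically reinforces the containment needed to invoke the uniform-continuity estimate at stage $k+1$, so the induction closes without any smallness assumption on $k$.
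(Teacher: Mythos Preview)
Your argument is correct and is essentially the standard proof of this lemma. Note, however, that the paper does not supply its own proof here: it simply refers the reader to \cite[Lemma~5.2]{ABV00}. The backward-induction construction of inverse branches $g_k$ along the orbit $f^{n-k}(x)$, combined with the uniform-continuity perturbation of $\|Df^{-1}\|$ and the telescoping against the hyperbolic-time product bound, is exactly the argument given in that reference, so you have reproduced the cited proof rather than found an alternative route.

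One small point worth tightening: your justification of the reverse inclusion $B_\varepsilon(x,n)\subseteq g_n(B(f^n(x),\varepsilon))$ is slightly elliptic. The clean way to phrase it is that for any $z\in B_\varepsilon(x,n)$ one has $f^{n-k}(z)\in B(f^{n-k}(x),\varepsilon)\subset B(f^{n-k}(x),\delta_1)$ for every $k$, and since $\phi_k$ is the unique local inverse of $f$ taking values in $B(f^{n-k}(x),\delta_1)$, one gets $\phi_k(f^{n-k+1}(z))=f^{n-k}(z)$ inductively, hence $g_n(f^n(z))=z$. This is what you mean by ``continuity along the pseudo-orbit,'' but spelling it out removes any ambiguity.
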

Therefore, $f$ satisfies our conditions~(\ref{estrela}) and (\ref{H1}). Note that if $n$ is a hyperbolic time of $x\in H$ then for any H\"older continuous function $\varphi:M\to \mathbb{R}$ and all $y, z\in B_{\varepsilon}(x,n)$ we have
$$\sum_{i=0}^{n}|\varphi(f^i(y))-\varphi(f^i(z))|\leq V
 $$
 which shows that $\varphi$ satisfies our condition~(\ref{H2}).      
		Moreover, in~\cite{OV08, VV10} it was proved that for a H\"older continuous potential $\varphi$ with \emph{small variation}, i.e. $$\sup\varphi - \inf\varphi < \log\deg(f)-\log q,$$  the relative pressure $P(\varphi, H)$ satisfies 
		$$P_{f}(\varphi, H^{c})<P_{f}(\varphi, H)=P_{f}(\varphi).$$
	Thus $\varphi$ also satisfies our condition~(\ref{H3}).} 
        \end{example}

In addition to the class presented above, we can apply our results in the following example, where the contraction rate is more general than in~\cite{OV08, VV10}.

\begin{example}\label{deformacao}\normalfont{Let $g:M\rightarrow M$ be an expanding map and $\mathcal{P}=\left\{P_{1},\dots,P_{n}\right\}$ be a covering of $M$ by injectivity domains of $g$. On each $P_{i}$ consider a fixed point $p_{i}$ and for $0<\delta< 1$ deform $g$ on a neighborhood $B_i=B(p_ i, \delta)$ of $p_{i}$ by a Hopf bifurcation or Pitchfork bifurcation to obtain a local homeomorphism $f:M\rightarrow M$ which coincides with the expanding map $g$ outside $B_{1}\cup\cdots\cup B_{n}$. Denoting by $\lambda\in (0, 1)$ the contraction rate of $f$ on $B_i$ we have for each $n\in\mathbb{N}$, $0<\vep<\delta$ and $y\in B_{1}\cup\cdots\cup B_{n}$ that 
$$
d(f^n(x),f^n(y))\leq \lambda^{n} d(x,y)
$$
and so $B(f^n(y),\vep)\nsubseteq f^n(B_\vep(y,n))$. Moreover, if the deformation satisfies 
\begin{flushleft}
\begin{enumerate}
\item $f(B_{1}\cup\cdots\cup B_{n})\subset B_{1}\cup\cdots\cup B_{n};$
\item $h_{top}(f, B_{1}\cup\cdots\cup B_{n} )< h_{top}(f, (B_{1}\cup\cdots\cup B_{n})^{c})$
\end{enumerate}
\end{flushleft}
then the $f$-invariant set $$\cG=\bigcap_{j\geq 0}{f^{-j}((B_{1}\cup\cdots\cup B_{n} )^{c})}$$ is non-uniformly expanding and thus $f$ satisfies~ (\ref{estrela}) and (\ref{H1}). Since by construction it holds $$h_{top}(f, \cG^{c})<h_{top}(f,\cG)$$
then any continuous potential $\varphi: M\rightarrow\mathbb{R}$ with the Bowen property~(\ref{H2}) on the set $\cG$ such that $$\sup\left\{\varphi(x);\, x\in \cG^{c}\right\}\leq\inf\left\{\varphi(x);\, x\in \cG\right\}$$ satisfies our condition~(\ref{H3}). In fact,
$$P_{f}(\varphi, \cG^{c})\leq h_{top}(f, \cG^{c})+\sup_{x\in \cG^{c}}{\varphi(x)} <h_{top}(f, \cG)+\inf_{x\in \cG}{\varphi(x)}\leq P_{f}(\varphi, \cG).$$
Therefore we can apply our results in this setting.

\begin{figure}[htb]
\begin{minipage}[b]{0.87\linewidth}
\begin{center}
\includegraphics[width=3.9cm]{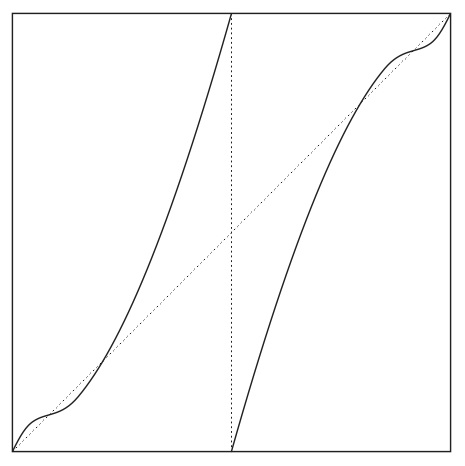}
\hspace{0.8cm}
\includegraphics[width=3.9cm]{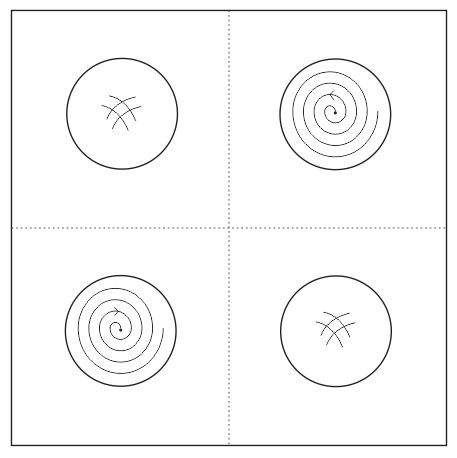}
\vspace{-0.4cm}
\caption{\!Deformations satisfying Example~\ref{deformacao}.}
\end{center}
\end{minipage}
\end{figure}}
\end{example}

In the following example, we present a broad class of dynamical systems introduced by Pinheiro in~\cite{Pin11}, where the concept of {\textit {zooming times}} is defined. This notion generalizes and relaxes the geometric features of hyperbolic times, allowing more flexibility for applications and examples. More recently, Pinheiro and Varandas~\cite{PinVar} studied the thermodynamic formalism for these systems, with a focus on the set of expanding measures.

\begin{example}[Zooming systems]\normalfont{
Consider a sequence  $\alpha=\{\alpha_n\}_{n\in\mathbb{N}}$ of functions $\alpha_n : [0, +\infty) \rightarrow  [0, +\infty)$ satisfying 
    \begin{itemize}
        \item $\alpha_n(r)\le r$ for all $r>0$ and $n\in \N$;
        \item $\alpha_n(r)<\alpha_n(\tilde{r})$ for all $0\leq r\le\tilde{r}$ and $n\in \N$;
        \item $\alpha_n\circ\alpha_m(r)\leq \alpha_{n+m}(r)$ for all $r>0$ and $n, m\in \N$;
        \item $\sup_{0\leq r\leq 1} (\sum_{n=1}^{\infty} \alpha_n(r))<\infty$.
    \end{itemize} 
    Let $f:X\to X$ be a measurable map defined on a connected, compact, separable metric space $(X, d)$ and let $\delta$ be a positive constant.
    \begin{definition} We say that $n\geq 1$ is a $(\alpha, \delta)$-zooming time for $p\in X$ (with respect to $f$) if there exists a neighborhood $V_n(p)$ of $p$ satisfying
    \begin{enumerate}
    \item $f^n$ sends $V_n(p)$ homeomorphically onto the ball $B_n(f^n(p), \delta).$
\item $d(f^{j}(z),f^{j}(y))\le \alpha_{n-j} \,(d(f^{n}(z),f^{n}(y)))$
    for every $y,z \in V_n(p)$ and every $ 1\le j\le n$.

    \end{enumerate}
    \end{definition}
    A positively invariant $\Lambda\subset X$ is called zooming set if every $x\in \Lambda$ has positive frequency of zooming times.
    Therefore, in this context, we observe that any local homeomorphism $f:X\to X$ with the zooming set $\Lambda$ satisfies our conditions~(\ref{estrela}) and~(\ref{H1}).}
\end{example}

As a subclass of the previous example, we consider the system described in~\cite{Pin11}, which provides a zooming but not expanding map.  
\begin{example}\label{exa nue top}\normalfont{Let $\sigma:\Sigma_2^+\rightarrow \Sigma_2^+$ be the one-side shift with the metric 
    \begin{align*}
        d(x,y) =
\left \{
\begin{array}{cc}
0, & \mbox{if} \;\; x=y\\
n(x,y)^{-2}, &\, \mbox{if}\;\; x\neq y .\\
\end{array}\right.
    \end{align*}
    where $x=(x_n)_n$, $y=(y_n)_n$ and $n(x,y):=\min\{n\ge 1: x_n\neq y_n\}$. Notice that $\sigma$ is a conformal map in the following sense:    $$D\sigma(x):=\lim_{y\to x}\frac{d(\sigma(y),  \sigma(x))}{d(y,x)}=1$$
     for all $x\in \Sigma_2^+$. In particular, we have 
    $$
    \lim_{n\rightarrow \infty}\frac{1}{n}\log D\sigma^n(x)=\lim_{n\rightarrow \infty}\frac{1}{n}\sum_{j=0}^{\infty}\log D\sigma(\sigma^j(x))=0.
    $$
    for all $x\in \Sigma_2^+$.  Therefore, $$\nu\left(\{x \in \Sigma_2^+\,;\,\,  \limsup_{n\rightarrow \infty}\frac{1}{n}\sum_{n=0}^{\infty}\log D\sigma(\sigma^n(x))>0\}\right)=0,$$ for any measure $\nu\in \mathbb{P}_{\sigma}(\Sigma_2^+)$, which shows that there are no expanding measures or expanding sets. Consequently, no potential $\varphi: \Sigma_2^+ \rightarrow \mathbb{R}$ is hyperbolic in the sense of \cite{RV17}.
    
   Let $C(x,n)=\{w\in \Sigma_2^+: w_i=x_i\,,\, 1\le i\le n \}$ be the $n$-cylinder of $x\in \Sigma_2^+ $. Since $\sigma^n(C(x,n))=C(\sigma^n(x),1)$ for all $x\in \Sigma_2^+$ and $n\in \N$ it follows that $\cG=\Sigma_2^+$ and thus, $\sigma$ satisfies our condition~(\ref{estrela}) and~(\ref{H1}). 
   
   Moreover, as we have  $$n(\sigma^j(y),\sigma^j(z))=n(\sigma^n(y),\sigma^n(z))+(n-j),$$
   for every $y, z\in C(x,n)$ and $0\leq j\leq n$, we obtain that 
   $$    d(\sigma^j(y),\sigma^j(z))=\alpha_{n-j}( d(\sigma^n(x),\sigma^n(y))),
    $$
    where $\alpha_n(r)=(1+n\sqrt{r})^{-2}r$. 
 Therefore, any H\"older continuous potential $\varphi: \Sigma_2^+ \rightarrow \R$ is Bowen and satisfies the condition~(\ref{H3}).}
    
\end{example}

Finally, as an application of our results for attractors sets, we present a general class of skew-products that satisfies our hypothesis. This class generalizes the setting of \cite{CN}, \cite{FO} and \cite{RV17}; in particular, it includes attractors derived from solenoid-like systems and the family of partially hyperbolic horseshoes introduced by~\cite{diazetal} and studied by~\cite{RS17, RS16} . 

\begin{example} \normalfont{Consider $f:M\rightarrow M$ be a local homeomorphism as in the Examples~\ref{paulo}, \ref{deformacao} or \ref{exa nue top}. Let $K$ be a compact metric space with distance $d_K$ and $g:M\times K\rightarrow K$ be an endomorphism that is a uniform contraction on $K$: there exists  $\lambda\in(0, 1)$ such that for all $x\in M$  and  $y_1,y_2\in K$ we have
\begin{equation*}
d_K\big(g(x, y_1), g(x,y_2)\big)\leq\lambda\, d_{K}(y_1,y_2).
\end{equation*} 
Define on $M\times K$ the skew-product:
$$F:M\times K\rightarrow M\times K,\;\;\;F(x,y)=(f(x),\,g_{x}(y)\,).$$
Notice that the continuous conjugacy between $F$ and $f$ is the canonical projection $\pi$ on the first coordinate. 

Let $\phi: M\times K\rightarrow\mathbb{R}$ be a H\"older continuous potential. If the base dynamic $f$ is as described in Example~\ref{paulo}, we can consider $\phi$ with small variation: $$\sup\phi - \inf\phi < \log\deg(f)-\log q.$$ 
and for $f$ as in Example~\ref{deformacao}, we can consider $\phi$ satisfying $$\sup\left\{\phi(x);\, x\in \hat{\cG}^{c}\right\}\leq\inf\left\{\phi(x);\, x\in \hat{\cG}\right\}.$$ 
In any case, $\phi$ satisfies our conditions~(\ref{H4}) and (\ref{H5}). Indeed, in Section~\ref{atrator}, we prove that any H\"older continuous potential $\phi: M\times K\to \mathbb{R}$ is cohomologous to one which does not depend on the stable direction; Moreover, we can induce a continuous potential $\varphi: M\to\mathbb{R}$ for $f$ whose variation is related to the variation of $\phi$. Therefore, as in Examples~\ref{paulo}, \ref{deformacao} or \ref{exa nue top}, it follows that $\varphi$ is a Bowen potential whose pressure is located on $\cG.$ These properties imply that $\phi$ satisfies~(\ref{H4}) and (\ref{H5}).} 
\end{example}

\section{Topological Pressure} \label{toppressure} 
In this section, we present the definition and some properties of topological pressure.
Let $T:X\to X$ be a continuous transformation defined on a compact metric space $X$ and $\phi:X\rightarrow\mathbb{R}$ a continuous function. Given an open cover $\alpha$ of $X$, the pressure $P_{T}(\phi, \alpha)$ of $\phi$ with respect to $\alpha$ is 
$$P_{T}(\phi, \alpha):=\lim_{n\rightarrow +\infty}{\frac{1}{n}\log\inf_{\mathcal{U}\subset\alpha^{n}}\{\sum_{U\in\mathcal{U}}e^{\phi_{n}(U)}\}}\\$$
where the infimum is taken over all subcover $\mathcal{U}$ of $\alpha^{n}=\vee_{n\geq 0}T^{n}\alpha$ and $\phi_{n}(U)$ is the supremum $\sup_{x\in U}\sum_{j=0}^{n-1}\phi\circ T^{j}(x).$ 

\begin{definition} The topological pressure $P_{T}(\phi)$ of $\phi$ with respect to $T$ is 
$$P_{T}(\phi):=\lim_{\delta\rightarrow 0}\big\{\sup_{|\alpha|\leq\delta} P_{T}(\phi,\alpha)\big\}$$ 
where $|\alpha|$ denotes the diameter of the open cover $\alpha.$
\end{definition}

The connection between topological pressure and the metric entropy of an invariant measure is formalized by the variational principle. Let $\mathbb{P}_T(X)$ be the set of $T$-invariant probability measures. The variational principle asserts that the topological pressure of $(T, \phi)$ is given by
\begin{equation}\label{pressure2}
P_T(\phi) = \sup_{\eta \in \mathbb{P}_T(X)} \left\{ h_{\eta}(T) + \int \phi \, d\eta \right\}.
\end{equation}
We refer the reader to \cite[Theorem 4.1]{W} for a proof. In particular, the variational principle provides a natural way for selecting meaningful measures in $\mathbb{P}_T(X)$. A measure $\mu_\phi \in \mathbb{P}_T(X)$ is called an \textit{equilibrium state} for $(T, \phi)$ if it attains the supremum in~(\ref{pressure2}), thereby realizing the topological pressure:
$$P_{T}(\phi)=h_{\mu_{\phi}}(T)+\int{\phi}\, d\mu_{\phi}.$$

An alternative way to define topological pressure is through the notion of relative pressure. This approach is from dimension theory and it is very useful to calculate the topological pressure of non-compact sets. The reader can see  more property of the relative topological pressure  in~\cite{Pes}.

Given $\vep>0$, $n\in\mathbb{N}$ and $x\in X$, consider the \emph{dynamical ball}  
$$B_{\vep}(x,n)=\left\{y\in X:\;d(T^{j}(x), T^{j}(y))<\vep,\;\mbox{for}\;\; 0\leq j\leq n\right\}.$$
For each  $N\in\mathbb N$ consider the family
 $$\mathcal{F}_{N}=\{B_{\vep}(x,n):\; x\in X\;\mbox{and}\; n\geq N\}.$$
Let $\Lambda\subset X$ be a positively invariant set ($T(\Lambda)\subset \Lambda$), not necessarily compact. We denote by $\mathcal{F}_{N}(\Lambda)$ the finite or countable families of elements in $\mathcal{F}_{N}$ which cover $\Lambda.$
Define for $n\in\mathbb N$
 $$S_n\phi(x)=\phi(x)+\phi(T(x))+\cdots+\phi(T^{n-1}(x))
 $$
 and
 $$R_{n,\vep}\phi(x)=S_{n}\phi(B_{\vep}(x, n))=\sup_{y\in B_\vep(x,n)}S_n\phi(y).$$
For each~$\gamma>0$ we consider
$$ m_{T}(\phi, \Lambda, \vep, N,\gamma)=
\inf_{\mathcal{U}\subset\mathcal{F}_{N}(\Lambda)} \left\{\sum_{B_{\vep}(x,n)\in\mathcal{U}}e^{-\gamma n+ R_{n,\vep}\phi(x)}\right\}.
$$
Define
$$m_{T}(\phi, \Lambda, \vep, \gamma)=\lim_{N\rightarrow +\infty}{m_{T}(\phi, \Lambda, \delta, N,\gamma)},$$
and by taking the infimum over $\gamma$ we obtain
$$P_{T}(\phi, \Lambda, \vep)=\inf{\{\gamma >0 \, | \; m_{T}(\phi, \Lambda, \vep,\gamma)=0\}}.$$
Therefore the \textit{relative topological pressure} of $\phi$ 
on a set $\Lambda$ is defined by
 $$P_{T}(\phi, \Lambda)=\lim_{\vep\rightarrow 0} {P_{T}(\phi, \Lambda, \vep)}.$$
In particular,  the \emph{topological pressure of $\phi$} is $P_T(\phi, X)$, and it satisfies
\begin{equation}\label{pressure}
P_{T}(\phi)=\sup\left\{ P_{T}(\phi, \Lambda),\, P_{T}(\phi, \Lambda^{c})\right\},
\end{equation}
where $\Lambda^c$ is the complement of the set $\Lambda$ on $X$. 

Let $C^0(X)$ be the space of real continuous functions $\phi:X\to \mathbb {R}$ endowed with the supremum norm. Fix a positively invariant set $\Lambda \subset X$ and define the set $\mathcal{P}(\Lambda)$ as follows $$\mathcal{P}(\Lambda):=\{\phi \in C^{0}(X);\,\, P_{T}(\phi, \Lambda^c)< P_{T}(\phi, \Lambda)\}.$$
In the next proposition, we show that $\mathcal{P}(\Lambda)$ is an open subset of the space of continuous functions in the $C^0$-topology.
In particular, the set $\mathcal{P}(\cG)$ of functions satisfying condition~(\ref{H3}) is also open in the space of continuous functions endowed with the $C^0$-topology. 

\begin{proposition}\label{pr.openess} 
Given $\phi\in \mathcal{P}(\Lambda)$ there exists $\zeta>0$ such that for each $\psi\in C^{0}(X)$ with $\|\psi-\phi\|<\zeta$ we have $P_T(\psi, \Lambda^c)< P_T(\psi,\Lambda)=P_T(\psi).$
\end{proposition}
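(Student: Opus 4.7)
The plan is to exploit the fact that relative topological pressure is $1$-Lipschitz in the potential with respect to the $C^{0}$-norm. Concretely, I would first show that for any positively invariant set $Y\subset X$ and any $\phi,\psi\in C^0(X)$,
$$|P_T(\psi,Y)-P_T(\phi,Y)|\leq \|\psi-\phi\|_\infty.$$
To do this, observe that $|S_n\psi(x)-S_n\phi(x)|\leq n\|\psi-\phi\|_\infty$ for every $x$ and $n$, and therefore $R_{n,\vep}\psi(x)\leq R_{n,\vep}\phi(x)+n\|\psi-\phi\|_\infty$. Substituting this into the definition of $m_T(\psi,Y,\vep,N,\gamma)$ yields
$$m_T(\psi,Y,\vep,N,\gamma)\leq m_T(\phi,Y,\vep,N,\gamma-\|\psi-\phi\|_\infty),$$
which after letting $N\to+\infty$ and taking the infimum in $\gamma$ gives $P_T(\psi,Y,\vep)\leq P_T(\phi,Y,\vep)+\|\psi-\phi\|_\infty$. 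A symmetric argument provides the reverse inequality, and then one sends $\vep\to 0$.

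Next I would use this continuity to spread the gap hypothesis. Since $\phi\in\mathcal{P}(\Lambda)$, the number
$$\zeta:=\tfrac{1}{2}\bigl(P_T(\phi,\Lambda)-P_T(\phi,\Lambda^c)\bigr)$$
is strictly positive. For every $\psi\in C^0(X)$ with $\|\psi-\phi\|_\infty<\zeta$, the Lipschitz estimate applied to $Y=\Lambda$ and $Y=\Lambda^c$ yields
$$P_T(\psi,\Lambda^c)<P_T(\phi,\Lambda^c)+\zeta=\tfrac{1}{2}\bigl(P_T(\phi,\Lambda)+P_T(\phi,\Lambda^c)\bigr)<P_T(\phi,\Lambda)-\zeta< P_T(\psi,\Lambda),$$
which establishes that $P_T(\psi,\Lambda^c)<P_T(\psi,\Lambda)$. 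Finally, the identity (4.1) recalled in the paper, $P_T(\psi)=\max\{P_T(\psi,\Lambda),P_T(\psi,\Lambda^c)\}$, together with the strict inequality just proved, forces $P_T(\psi)=P_T(\psi,\Lambda)$, which is the desired conclusion.

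I do not anticipate a serious obstacle: the whole argument reduces to the Lipschitz dependence of the relative pressure on the potential, which is a routine consequence of the formulas in Section~\ref{toppressure}. The only mildly delicate point is making sure the Lipschitz bound passes through the two limits ($N\to+\infty$ and $\vep\to 0$) and through the infimum defining $P_T(\phi,\Lambda,\vep)$; this is handled by the shift $\gamma\mapsto \gamma-\|\psi-\phi\|_\infty$ in the Carath\'eodory-type construction, which preserves the critical value up to an additive term exactly equal to $\|\psi-\phi\|_\infty$.
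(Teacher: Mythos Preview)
Your proposal is correct and follows essentially the same approach as the paper: both arguments rest on the observation that the shift $\gamma\mapsto\gamma-\|\psi-\phi\|_\infty$ in the Carath\'eodory construction yields $m_T(\psi,Y,\vep,N,\gamma)\leq m_T(\phi,Y,\vep,N,\gamma-\|\psi-\phi\|_\infty)$, hence a Lipschitz bound on the relative pressure. The only difference is organizational: the paper invokes the continuity of the full pressure $P_T(\cdot)$ from Walters and carries out the Carath\'eodory estimate only for $\Lambda^c$, whereas you prove the two-sided Lipschitz bound directly for both $\Lambda$ and $\Lambda^c$ and then appeal to $P_T(\psi)=\max\{P_T(\psi,\Lambda),P_T(\psi,\Lambda^c)\}$; your version is slightly more self-contained and gives an explicit value for $\zeta$.
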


\begin{proof} Given $\phi\in C^{0}(X)$ satisfying $P_T(\phi, \Lambda^c)<P_T(\phi, \Lambda)=P_T(\phi)$, we fix $\varepsilon_\phi>0$ such that $P_T(\phi, \Lambda^c)<P_T(\phi)-\varepsilon_\phi.$
Since the topological pressure depends continuously on the potential $\phi\in C^0(X)$ (see e.g. \cite{Wal00}), then for $0<\varepsilon<\varepsilon_\phi$ we can find $\zeta>0$ such that for any $\psi\in C^{0}(X)$ with $\|\psi-\phi\|<\zeta$ we have $P_T(\psi)\geq P_T(\phi)-\varepsilon.$
For every $\gamma\in\mathbb{R}$ we have
\begin{eqnarray*}
m_{T}(\psi, \Lambda^c, \delta, N,\gamma)&=&\inf_{\mathcal{U}\subset\mathcal{F}_{N}(\Lambda^c)} \left\{\sum_{B_{\delta}(x,n)\in\mathcal{U}}e^{-\gamma n+ R_{n,\delta}\psi(x)}\right\}\\
&\le&\inf_{\mathcal{U}\subset\mathcal{F}_{N}(\Lambda^c)} \left\{\sum_{B_{\delta}(x,n)\in\mathcal{U}}e^{-\gamma n+\zeta n+ R_{n,\delta}\phi(x)}\right\}\\
&=&m_{T}\left(\phi, \Lambda^c, \delta, N,\gamma -\zeta\right),
\end{eqnarray*}
and therefore 
$$P_T(\psi, \Lambda^c)\leq P_T(\phi, \Lambda^c)-\zeta<P_T(\phi)-\varepsilon_{\phi}- \zeta\leq P_T(\phi)-\varepsilon\leq P_T(\psi).$$
Hence, for any  $\psi\in C^{0}(X)$ with $\|\psi-\phi\|<\zeta$ we have $$P_T(\psi, \Lambda^c)< P_T(\psi,\Lambda)=P_T(\psi),$$
which gives the desired conclusion.   
\end{proof}

We finish this section by proving that cohomologous potentials have the same relative topological pressure. As a consequence, they have the same topological pressure and the same equilibrium states. We recall that two potentials $\phi\, ,\psi: X\to \mathbb R$ are  \emph{cohomologous} if there exists a continuous function $u:X \to\mathbb{R}$ such that $\psi=\phi-u+u\circ T$; 

\begin{proposition}\label{pressao cohomologo}
Let $T:X\rightarrow X$ be a continuous transformation on a compact metric space. If $\phi\, ,\psi: X\to \mathbb R$ are cohomologous potentials then for any positively invariant set $\Lambda\subset X$ we have that  $P_{T}(\Lambda,\phi)=P_{T}(\Lambda,\psi)$.
\end{proposition}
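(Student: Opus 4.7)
\medskip

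\noindent\textbf{Proof proposal.} The plan is to reduce the equality of relative pressures to a uniform estimate between the Birkhoff sums of $\phi$ and $\psi$, and then to push this estimate through each ingredient of the definition of $P_T(\phi,\Lambda)$. Write $\psi=\phi-u+u\circ T$ with $u:X\to\mathbb{R}$ continuous, and set $C:=2\|u\|_\infty<\infty$ (finite since $X$ is compact). A telescoping computation gives
$$
S_n\psi(x)=S_n\phi(x)-u(x)+u(T^n(x)),
$$
hence $|S_n\psi(x)-S_n\phi(x)|\le C$ for every $x\in X$ and every $n\ge 1$.

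The next step is to transfer this to the quantities $R_{n,\varepsilon}\phi$ and $R_{n,\varepsilon}\psi$ entering the definition of the relative pressure. Since the estimate above is pointwise and uniform in $x$, taking a supremum over $y\in B_\varepsilon(x,n)$ yields
$$
|R_{n,\varepsilon}\psi(x)-R_{n,\varepsilon}\phi(x)|\le C
$$
for every $x\in X$, $n\in\mathbb{N}$ and $\varepsilon>0$. Exponentiating, we obtain for every $\gamma\in\mathbb{R}$ and every countable family $\mathcal{U}\subset \mathcal{F}_N(\Lambda)$ covering $\Lambda$ the inequality
$$
\sum_{B_\varepsilon(x,n)\in\mathcal{U}} e^{-\gamma n+R_{n,\varepsilon}\psi(x)}\ \le\ e^{C}\sum_{B_\varepsilon(x,n)\in\mathcal{U}} e^{-\gamma n+R_{n,\varepsilon}\phi(x)}.
$$
Taking the infimum over such covers and letting $N\to +\infty$ gives
$$
m_T(\psi,\Lambda,\varepsilon,\gamma)\ \le\ e^{C}\, m_T(\phi,\Lambda,\varepsilon,\gamma),
$$
and the analogous inequality with the roles of $\phi$ and $\psi$ exchanged follows by symmetry.

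The final step is to observe that multiplying by the fixed constant $e^{C}$ does not change whether the quantity equals $0$. Thus for every $\gamma\in\mathbb{R}$ and every $\varepsilon>0$,
$$
m_T(\phi,\Lambda,\varepsilon,\gamma)=0\ \Longleftrightarrow\ m_T(\psi,\Lambda,\varepsilon,\gamma)=0.
$$
Taking the infimum over all $\gamma$ for which the above quantity vanishes, we conclude that $P_T(\phi,\Lambda,\varepsilon)=P_T(\psi,\Lambda,\varepsilon)$ for every $\varepsilon>0$. Letting $\varepsilon\to 0$ delivers $P_T(\phi,\Lambda)=P_T(\psi,\Lambda)$, as desired.

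There is essentially no obstacle: the whole argument rests on the elementary but crucial telescoping identity for Birkhoff sums of a coboundary, and the only subtlety worth being careful about is that the bound is \emph{multiplicative} (a factor $e^{C}$) rather than additive at the level of $m_T$, which is precisely why it is invisible to the threshold-type definition $P_T(\phi,\Lambda,\varepsilon)=\inf\{\gamma>0:m_T(\phi,\Lambda,\varepsilon,\gamma)=0\}$.
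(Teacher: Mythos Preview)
Your proof is correct and follows essentially the same approach as the paper: both use the telescoping identity $S_n\psi=S_n\phi-u+u\circ T^n$ to get a uniform bound $|R_{n,\varepsilon}\psi(x)-R_{n,\varepsilon}\phi(x)|\le 2\|u\|$, push this through the sums defining $m_T$ as a multiplicative factor $e^{\pm 2\|u\|}$, and then observe that such a factor does not affect the critical exponent $P_T(\cdot,\Lambda,\varepsilon)$.
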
 
\begin{proof}
Let $\mathcal{F}_{N}(\Lambda)$ be the finite or countable families of elements in $\mathcal{F}_{N}$ that cover $\Lambda$, where $\mathcal{F}_{N}$ is the collection of dynamical balls 
 $$\mathcal{F}_{N}=\{B_{\vep}(x,n):\; x\in N\;\mbox{and}\; n\geq N\}.$$
Since  $\psi=\phi-u+u\circ T$ we have $S_n\psi(x)=S_n\phi(x)-u(x)+u\circ T^n(x)$ for any $n\in \mathbb{N}$. Let $\|u\|=\sup_{x\in X} |u(x)|$. Then
$$|R_{n,\vep}\psi(x)-R_{n,\vep}\phi(x)|=|\sup_{y\in B_\vep(x,n)}S_n\psi(y)-\sup_{y\in B_\vep(x,n)}S_n\phi(y)|\leq 2\|u\|$$
and thus for any $\mathcal{U}\subset\mathcal{F}_{N}(\Lambda)$ we have
\begin{equation}\label{inf}
e^{-2\|u\|}\sum_{\mathcal{U}}e^{-\gamma n+ R_{n,\vep}\phi(x)}\leq \sum_{\mathcal{U}}e^{-\gamma n+ R_{n,\vep}\psi(x)}\leq e^{2\|u\|}\sum_{\mathcal{U}}e^{-\gamma n+ R_{n,\vep}\phi(x)}.
\end{equation}
Recalling that by definition
$$
m_{T}(\phi, \Lambda, \vep, N,\gamma)=
\inf_{\mathcal{U}\subset\mathcal{F}_{N}(\Lambda)} \left\{\sum_{B_{\vep}(x,n)\in\mathcal{U}}e^{-\gamma n+ R_{n,\vep}\phi(x)}\right\},
$$
from inequality~(\ref{inf}) we obtain
\begin{equation*}
    e^{-2\|u\|}m_{T}(\phi, \Lambda, \vep, N, \gamma)\leq m_{T}(\psi,  \Lambda, \vep, N, \gamma)\leq e^{2\|u\|} m_{T}(\phi,  \Lambda, \vep, N, \gamma).
\end{equation*}
As $N$ goes to infinity it follows that 
\begin{equation}\label{m}
    e^{-2\|u\|}m_{T}(\phi,  \Lambda, \vep, \gamma)\leq m_{T}(\psi, \Lambda, \vep, \gamma)\leq e^{2\|u\|} m_{T}(\phi, \Lambda, \vep, \gamma)
\end{equation}
for all $\gamma>0$. Recalling that
$$P_{T}(\phi, \Lambda, \vep)=\inf{\{\gamma >0 \, | \; m_{T}(\phi,  \Lambda, \vep,\gamma)=0\}},$$
we  take the infimum over $\gamma$ in~(\ref{m}) and conclude for all $\vep>0$ that
$$P_T(\phi, \Lambda, \vep)=P_T(\psi,  \Lambda, \vep).$$ 
Thus $P_T(\phi, \Lambda)=P_T(\psi, \Lambda),$ for any positively invariant set $\Lambda\subset X.$ 
\end{proof}


 \section{The Reference measure}\label{reference measure}  
 The \emph{Ruelle-Perron-Fröbenius transfer operator} associated to the local homeomorphism  $f:M\rightarrow M$ and the real continuous function $\varphi:M\rightarrow \mathbb{R}$ is the linear operator
 $$\mathcal{L}_{f,\varphi} : C^{0}(M) \rightarrow C^{0}(M)$$ which associates to $\psi \in C^{0}(M)$ the continuous function
$ \mathcal{L}_{f,\varphi} (\psi) \colon M\to \mathbb{R}$ defined by
$$\label{optransf}
\mathcal{L}_{f,\varphi} \psi \left(x\right) = \displaystyle\sum _{y  \in \, f^{-1}\left(x\right)} e^{\varphi(y)} \psi(y). 
$$
Notice that $ \mathcal{L}_{f,\varphi} $ is a bounded linear operator and for each $n \! \in\! \mathbb{N}$ we have  
$$\label{iteradostransf}
\mathcal{L}_{f,\varphi}^{n} \psi (x) = \displaystyle\sum _{y  \in \, f^{-n}\left(x\right)} e^{S_{n}\varphi(y)} \psi \left( y \right),
$$
where $S_{n}\varphi$ denotes the Birkhoff sum $S_{n}\varphi(x)= \displaystyle\sum_{j=0}^{n-1} \varphi\big(f^{j}(x)\big).$

Since we are considering $\mathcal{L}_{f,\varphi} : C^{0}(M) \rightarrow C^{0}(M)$, by Riesz-Markov Theorem, we may think of  its dual operator $\mathcal{L}_{f,\varphi}^{\ast}: \mathbb{P}(M) \to \mathbb{P}(M)$. Moreover,  we have
$$\int \psi\ d\mathcal{L}_{f,\varphi}^{\ast}\eta  = \int  \mathcal{L}_{f,\varphi}(  \psi ) \ d\eta , $$
for every $\psi \in  C^{0}(M) $ and every $\eta \in \mathbb{P}(M)$. It is a classical result that
for each $(f, \varphi)$  there exists a probability measure $\nu$ satisfying $$\mathcal{L}_{f,\varphi}^{\ast}\nu= \lambda \nu ,$$
where $\lambda$ is the spectral radius of $\mathcal{L}_{f,\varphi} $. The reader can see \cite[Lema 3.2]{ARS18} for a proof of this result. We refer to $\nu$ as the \emph{reference measure} associated to $(f, \varphi)$. Next we will show some important properties of this measure.

The \textit{Jacobian} of a probability measure $\mu$ with respect to $f$ is a measurable function $J_{\mu}f$ satisfying
$$\mu(f(A))=\int_{A}J_{\mu}f d\mu,$$
for any measurable set $A$ where $f|_{A}$ is injective. For the reference measure, it is straightforward to check that the jacobian is $J_{\nu}f=\lambda e^{-\varphi}.$ Hence
$$\nu(f(A))=\int_{A}{\lambda e^{-\varphi}}d\nu,$$
for any measurable set $A$ where $f|_{A}$ is injective. Notice that $J_{\nu}f$ is a continuous function bounded away from zero and infinity. This good Jacobian allows to prove the \emph{Gibbs property} of $\nu$ at times satisfying~(\ref{estrela}). 

\begin{lemma}\label{conforme}
Let $\nu$ be the reference measure of $(f, \phi).$ Then $\supp(\nu)=M$. Moreover, for every $0<\vep< \delta_0/2$, there exists $K=K(\vep)>0$ such that if $x\in\cG$ and $n=n(x)\in\mathbb{N}$ satisfies~(\ref{estrela}) then $$K^{-1} \leq  \frac{\nu(B_\varepsilon(x, n))}{\exp({S_n\varphi(y) - n P})}  \leq K$$ 
for all $y\in B_{\varepsilon}(x,n)$, where $P=\log\lambda.$
\end{lemma}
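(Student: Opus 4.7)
My plan is to combine two ingredients: the explicit Jacobian $J_{\nu}f=\lambda e^{-\varphi}$, which when iterated $n$ times on a set where $f^n$ is injective relates $\nu(B(f^n(x),\vep))$ to $\nu(B_{\vep}(x,n))$; and the Bowen estimate (\ref{H2}), which lets me pull $e^{-S_n\varphi}$ out of that integral with a multiplicative error at most $e^V$. Together these yield the Gibbs bound up to the factor $\nu(B(f^n(x),\vep))$, which I then control uniformly using the full-support statement.

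I would establish $\supp(\nu)=M$ first, arguing by contradiction. If a nonempty open $U\subset M$ had $\nu(U)=0$, pick a non-trivial continuous $\psi\geq 0$ supported in $U$. The eigen-equation gives $\int\cL_{f,\varphi}^n\psi\,d\nu=\lambda^n\int\psi\,d\nu=0$ for every $n$, and since $\cL_{f,\varphi}^n\psi$ is continuous and non-negative it vanishes identically on $\supp(\nu)$. Choosing any $z\in\supp(\nu)$ and using
\[
\cL_{f,\varphi}^n\psi(z)=\sum_{w\in f^{-n}(z)}e^{S_n\varphi(w)}\psi(w),
\]
I conclude $\psi\equiv 0$ on $\bigcup_{n\geq 0}f^{-n}(z)$, so the preorbit of $z$ avoids the nonempty open set $\{\psi>0\}$, contradicting its assumed density.

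For the Gibbs estimate, I fix $x\in\cG$ and a time $n=n(x)$ satisfying (\ref{estrela}) with $\vep<\delta_0/2$. By (\ref{estrela}), $f^n$ maps $B_{\vep}(x,n)$ homeomorphically onto $B(f^n(x),\vep)$, so iterating the Jacobian identity gives
\[
\nu(B(f^n(x),\vep))=\int_{B_{\vep}(x,n)}\lambda^n e^{-S_n\varphi(z)}\,d\nu(z).
\]
Applying (\ref{H2}), $|S_n\varphi(y)-S_n\varphi(z)|\leq V$ for every $y,z\in B_{\vep}(x,n)$; substituting and writing $P=\log\lambda$ yields
\[
e^{-V}\,e^{S_n\varphi(y)-nP}\,\nu(B(f^n(x),\vep))\leq\nu(B_{\vep}(x,n))\leq e^{V}\,e^{S_n\varphi(y)-nP}\,\nu(B(f^n(x),\vep))
\]
for every $y\in B_{\vep}(x,n)$.

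The only nontrivial remaining point is a uniform lower bound for $\nu(B(f^n(x),\vep))$; the upper bound is trivially $1$. Here I exploit $\supp(\nu)=M$ together with compactness: covering $M$ by finitely many balls $B(w_1,\vep/2),\dots,B(w_N,\vep/2)$, any $w\in M$ lies in some $B(w_i,\vep/2)$, whence $B(w_i,\vep/2)\subset B(w,\vep)$ and thus $\nu(B(w,\vep))\geq\min_{j}\nu(B(w_j,\vep/2))=:c_{\vep}>0$. Setting $K(\vep)=e^V\max\{1,c_{\vep}^{-1}\}$ then produces the claimed two-sided bound, with the uniform control of $c_{\vep}$ being the main obstacle overcome by the full-support argument above.
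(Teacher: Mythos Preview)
Your proof is correct and follows essentially the same approach as the paper: establish full support, apply the iterated Jacobian identity on the dynamical ball where $f^n$ is injective, use the Bowen bound (\ref{H2}) to extract $e^{-S_n\varphi(y)}$, and bound $\nu(B(f^n(x),\vep))$ uniformly via full support and compactness.

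The one genuine difference is in how you prove $\supp(\nu)=M$. The paper argues directly: using preorbit density it writes $M\subset\bigcup_s f^s(U)$, decomposes $U$ into injectivity domains $V_i(s)$, and applies the Jacobian formula to force some $\nu(V_i(s))>0$. Your argument instead exploits duality: from $\int\cL_{f,\varphi}^n\psi\,d\nu=0$ you deduce $\psi$ vanishes on the (dense) preorbit of any point of $\supp(\nu)$. Your route is slightly cleaner in that it avoids the bookkeeping of decomposing into injectivity pieces; the paper's route is more hands-on and makes the role of the Jacobian explicit from the start. Both rely on the same hypothesis (density of preorbits) and yield the same conclusion.
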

\begin{proof} We begin by proving that $\nu$ is an open measure i.e. positive in open sets. In fact, the density of pre-orbit $\{f^{-n}(x)\}_{n\geq 0}$ of points $x\in M$ implies the inclusion $M\subset\bigcup_{s\in\mathbb{N}}f^{s}(U)$ for any open set $U\subset M$.
Since for each $s$, $f^{s}$ is also a local homeomorphism we can decompose $U$ into subsets $V_{i}(s)\subset U$ such that $f^{s}|_{V_{i}(s)}$ is injective. Hence,
\begin{eqnarray*}
1=\nu(M)\leq\sum_{s}\nu(f^{s}(U))
&\leq&\displaystyle\sum_{s}\sum_{i}\int_{V_{i}(s)}{\lambda^{s}e^{-S_{s}\varphi(x)}}d\nu\\ 
&\leq&\displaystyle\sum_{s}\lambda^{s}\displaystyle\sum_{i}\sup_{x\in V_{i}(s)}(e^{S_{s}\varphi(x)})\nu(V_{i}(s)).
\end{eqnarray*}
Thus there exists some $V_{i}(s)\subset U$ such that $\nu(U)\geq\nu(V_{i}(s))>0.$ This implies that $\supp(\nu)=M.$

Given $x\in \cG$, let $n(x)\in \mathbb N$ satisfying~(\ref{estrela}), i.e.,  $f^{n}(B_{\varepsilon}(x,n))=B(f^n(x),\vep)$ for any $0< \vep<\delta_0$. Since $\supp(\nu)=M$ we can find a uniform constant $\xi_\varepsilon>0$ depending on the radius $\varepsilon$ such that 
\begin{eqnarray*}
\xi_\varepsilon\leq\nu(B(f^{n}(x),\varepsilon))=\nu(f^{n}(B_{\varepsilon}(x,n)))=\int_{B_{\varepsilon}(x,n)}{\hspace{-0.4cm}\lambda^{n}e^{-S_{n}\varphi(y)}}\,d\nu\leq1.
\end{eqnarray*} 
Since $\varphi$ satisfies the Bowen property on the set $\cG$ we obtain
\begin{eqnarray*}
\xi_\varepsilon\leq\int_{B_{\varepsilon}(x,n)}\hspace{-0.4cm}\lambda^{n}e^{-S_{n}\varphi(y)} \frac{e^{-S_{n}\varphi(z)}}{e^{-S_{n}\varphi(y)}}\,d\nu
\!\!\!&\leq&\!\!\! V e^{-S_{n}\varphi(y)+n\log\lambda}\nu(B_{\varepsilon}(x,n))
\end{eqnarray*}
and
\begin{align*}
    e^{-S_{n}\varphi(y)+n\log\lambda}\nu(B_{\varepsilon}(x,n))\le V\int_{B_{\varepsilon}(x,n)}\hspace{-0.4cm}\lambda^{n}e^{-S_{n}\varphi(y)} \frac{e^{-S_{n}\varphi(z)}}{e^{-S_{n}\varphi(y)}}\,d\nu.
\end{align*}
Then, for $K=\max\{V, \xi_\vep^{-1}V\}$ we obtain the Gibbs property
$$K^{-1} \leq  \frac{\nu(B_\varepsilon(x, n))}{\exp({S_n\varphi(y) - n P})}  \leq K$$ 
for all $y\in B_{\varepsilon}(x,n)$ and $n$ satisfying~(\ref{estrela}), where $P=\log\lambda.$
\end{proof}

The last lemma showed that every iterate satisfying condition~(\ref{estrela}) is a Gibbs time for $x\in \cG$. Therefore, from now on, we will call the sequence $(n_{k}(x))_k$ given by condition~(\ref{estrela}) as Gibbs times.

\begin{corollary}\label{gibbs_n}
 Given $x\in \cG$, suppose that the sequence of Gibbs times $(n_i(x))_i$ is non-lacunar. Then, for each $0<\vep<\delta_0/2$ and $n\geq n_1(x)$, if $B_{\vep}(y,n)\subset  B_{\delta_0/2}(x,n)$, we have 
$$
K^{-1}e^{-\alpha(n_{i+1}-n_i(x))}\le \frac{\nu(B_\varepsilon(y, n))}{\exp\left(S_n\varphi(y) - nP\right)} \le Ke^{\alpha(n_{i+1}-n_i(x))}
$$
where $\alpha=\sup|\varphi|+P$ and $n_i(x)$, $n_{i+1}(x)$ are consecutive Gibbs times such that $n_i(x)\le n \le n_{i+1}(x)$. Moreover, $\displaystyle\lim_{n\to+\infty} \frac{1}{n}\log Ke^{\alpha(n_{i+1}-n_i(x))}=0$.
\end{corollary}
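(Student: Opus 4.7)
The plan is to derive both bounds directly from Lemma \ref{conforme} applied at the two enclosing Gibbs times $n_i$ and $n_{i+1}$ of $x$, absorbing the slack between these times into the factor $e^{\pm \alpha(n_{i+1} - n_i)}$ through an elementary comparison of Birkhoff sums. The key observation is that for any integer $m$ with $|m - n| \leq n_{i+1} - n_i$ one has
$$
|S_m\varphi(y) - mP - (S_n\varphi(y) - nP)| \leq |m-n|\,\alpha \leq (n_{i+1} - n_i)\,\alpha,
$$
so moving between times $n_i$, $n$ and $n_{i+1}$ at the level of $e^{S_n\varphi(y) - nP}$ costs only a multiplicative factor $e^{\alpha(n_{i+1} - n_i)}$.

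For the upper bound, I would use the monotonicity of dynamical balls to write $B_\vep(y, n) \subset B_{\delta_0/2}(x, n) \subset B_{\delta_0/2}(x, n_i)$. Since $n_i$ is a Gibbs time for $x$ and $y \in B_{\delta_0/2}(x, n_i)$, Lemma \ref{conforme} applied at radius $\delta_0/2$ gives $\nu(B_{\delta_0/2}(x, n_i)) \leq K\, e^{S_{n_i}\varphi(y) - n_i P}$. Combining this with the Birkhoff-sum comparison above yields $\nu(B_\vep(y, n)) \leq K e^{\alpha(n_{i+1} - n_i)}\, e^{S_n\varphi(y) - n P}$.

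For the lower bound, I would exploit the conformal structure of $\nu$ through its Jacobian. Since $f^{n_i}$ is a homeomorphism on $B_{\delta_0/2}(x, n_i) \supset B_\vep(y, n)$, the Jacobian identity together with the Bowen property~(\ref{H2}) on the Gibbs ball gives
$$
\nu(B_\vep(y, n)) \geq e^{-V}\, e^{S_{n_i}\varphi(y) - n_i P}\, \nu\bigl(f^{n_i}(B_\vep(y, n))\bigr).
$$
The main obstacle is then to produce a uniform lower bound on $\nu(f^{n_i}(B_\vep(y, n)))$ with the right exponential dependence. Here I would use the positive invariance of $\cG$, which implies $f^{n_i}(x) \in \cG$ with $n_{i+1} - n_i$ as one of its Gibbs times, and locate a full dynamical sub-ball inside $f^{n_i}(B_\vep(y, n))$ centered near $f^{n_i}(y)$; Lemma \ref{conforme} applied to $(f^{n_i}(x), n_{i+1} - n_i)$ controls the $\nu$-measure of this sub-ball, and the cost of comparing the corresponding Birkhoff sum back to $e^{S_n\varphi(y) - nP}$ contributes the factor $e^{-\alpha(n_{i+1} - n_i)}$. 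Finally, the non-lacunar hypothesis $(n_{i+1} - n_i)/n_i \to 0$ forces $\lim_{n \to \infty} \frac{1}{n}\log(K e^{\alpha(n_{i+1} - n_i)}) = 0$, since $n_i \leq n \leq n_{i+1}$ in the statement.
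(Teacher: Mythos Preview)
Your upper bound matches the paper's argument exactly: the chain of inclusions $B_\vep(y,n)\subset B_{\delta_0/2}(x,n)\subset B_{\delta_0/2}(x,n_i)$, Lemma~\ref{conforme} at the Gibbs time $n_i$, and the elementary Birkhoff-sum comparison $|S_m\varphi(y)-mP-(S_n\varphi(y)-nP)|\le |m-n|\,\alpha$ are precisely what the paper uses.

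For the lower bound, however, the paper does not push forward by $f^{n_i}$ and search for a sub-ball; it argues ``similarly'' via the reverse monotonicity $B_\vep(y,n)\supset B_\vep(y,n_{i+1})$ and the Gibbs estimate at the \emph{later} Gibbs time $n_{i+1}$. This is symmetric to the upper bound and works immediately at least in the case $y=x$ (which is the only case actually invoked in Section~\ref{LD}): one has $\nu(B_\vep(x,n))\ge\nu(B_\vep(x,n_{i+1}))\ge K^{-1}e^{S_{n_{i+1}}\varphi(x)-n_{i+1}P}$ directly from Lemma~\ref{conforme}, and the Birkhoff-sum comparison absorbs the gap $n_{i+1}-n$.

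Your alternative route through the Jacobian has a real gap at the step where you ``locate a full dynamical sub-ball inside $f^{n_i}(B_\vep(y,n))$''. That image equals $f^{n_i}\bigl(B_\vep(y,n_i)\bigr)\cap B_\vep(f^{n_i}(y),n-n_i)$, so besides the forward constraint it carries a \emph{past} constraint coming from $B_\vep(y,n_i)$. In this paper no backward contraction is assumed for the inverse branches of $f^{n_i}$; property~(\ref{estrela}) only says $f^{n_i}$ is a homeomorphism from $B_{\delta_0/2}(x,n_i)$ onto $B(f^{n_i}(x),\delta_0/2)$, with no control on the modulus of continuity of $(f^{n_i})^{-1}$ or of the intermediate compositions $f^j\circ(f^{n_i})^{-1}$. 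Hence there is no reason the past constraint admits a sub-ball of radius independent of $n_i$, and without that your lower bound on $\nu(f^{n_i}(B_\vep(y,n)))$ cannot be made uniform. The fix is simply to work forward to $n_{i+1}$ instead of backward from $n_i$, exactly as the paper does.
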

\begin{proof} For any $n\in\mathbb{N}$ we can find $n_i(x)$ such that $n_i(x)\le n \le n_{i+1}(x)$. Applying Lemma~\ref{conforme}, if $B_{\vep}(y,n)\subset  B_{\delta_0/2}(x,n)$ we have that 
    $$
    \nu(B_\varepsilon(y, n))\le K \exp\left(\sum_{j=0}^{n-1}\varphi(f^j(y)) - nP\right)e^{(\sup |\varphi|+P)(n_{i+1}-n_i(x))}.
    $$
The lower bound follows in a similar way. Taking $\alpha=\sup |\varphi|+P$ and using the fact that $(n_i(x))_i$ is non-lacunar, we complete the proof.
   \end{proof}

The definition of weak Gibbs measures depends on the constant $P$.  In the next proposition, we prove that $P$ is uniquely determined by the \emph{relative pressure} $P_{f}(\varphi, \cG)$ of $\varphi$ on $\cG$. 
\begin{proposition} \label{zooming measure}
Let $f:M\to M$ be a local homeomorphism satisfying~(\ref{H1}) and let ${\varphi}: M \to \mathbb{R}$ be a continuous potential satisfying~(\ref{H2}) and (\ref{H3}). Then $P=P_f(\varphi)$ and $\nu(\cG)=1$.
\end{proposition}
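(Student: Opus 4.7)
The plan is to establish two complementary bounds on the relative pressure of positively invariant subsets $\Lambda\subset M$ and then combine them with condition~(\ref{H3}).

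First I would prove the upper bound $P_f(\varphi,\cG)\le P$ by constructing, for each $N\in\N$, an efficient cover of $\cG$ by dynamical balls at Gibbs times. For each $x\in\cG$, hypothesis~(\ref{H1}) provides a Gibbs time $n(x)\ge N$, so $\{B_{\vep/2}(x,n(x))\}_{x\in\cG}$ covers $\cG$. I would then process these balls in order of increasing $n(x)$, greedily keeping only those disjoint from the previously chosen ones. The key geometric observation is that if $B_{\vep/2}(x,n(x))$ meets a retained ball $B_{\vep/2}(x_i,n_i)$ with $n_i\le n(x)$, then for any $0\le j\le n_i$ the triangle inequality gives $d(f^j(x),f^j(x_i))<\vep$, so $x\in B_\vep(x_i,n_i)$. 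Hence the countable disjoint family $\{B_{\vep/2}(x_i,n_i)\}$ has the property that $\{B_\vep(x_i,n_i)\}$ still covers $\cG$. Combining the Gibbs lower bound of Lemma~\ref{conforme} with disjointness produces $\sum_i e^{S_{n_i}\varphi(x_i)-n_iP}\le K$, and inserting the Bowen property~(\ref{H2}) then yields, for any $\gamma>P$,
\begin{equation*}
\sum_i e^{-\gamma n_i+R_{n_i,\vep}\varphi(x_i)}\le K e^{V}\,e^{-(\gamma-P)N}\longrightarrow 0 \text{ as } N\to\infty.
\end{equation*}
Letting $\vep\to 0$ would give $P_f(\varphi,\cG)\le P$.

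Next I would establish the complementary lower bound: for every positively invariant $\Lambda$ with $\nu(\Lambda)>0$, one has $P_f(\varphi,\Lambda)\ge P$. This step uses only the conformal Jacobian $J_\nu f=\lambda e^{-\varphi}$, not the Gibbs property. Choosing $\vep$ smaller than a uniform local-injectivity radius of $f$ makes $f^n$ injective on each $B_\vep(x,n)$, and one computes
\begin{equation*}
1\ge \nu\bigl(f^n(B_\vep(x,n))\bigr)=\int_{B_\vep(x,n)}\lambda^n e^{-S_n\varphi(y)}\,d\nu(y)\ge \lambda^n e^{-R_{n,\vep}\varphi(x)}\,\nu(B_\vep(x,n)),
\end{equation*}
giving the universal bound $\nu(B_\vep(x,n))\le e^{-nP+R_{n,\vep}\varphi(x)}$ for all $x\in M$ and $n\ge 1$. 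For any $\gamma\le P$ and any cover $\mathcal U\subset\cF_N(\Lambda)$, subadditivity of $\nu$ then yields $\sum_i e^{-\gamma n_i+R_{n_i,\vep}\varphi(x_i)}\ge \sum_i \nu(B_\vep(x_i,n_i))\ge \nu(\Lambda)>0$, hence $P_f(\varphi,\Lambda)\ge P$.

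To conclude both claims at once, assume for contradiction $\nu(\cG^c)>0$. The lower bound applied to $\Lambda=\cG^c$ gives $P_f(\varphi,\cG^c)\ge P$, while the upper bound gives $P_f(\varphi,\cG)\le P$; together with~(\ref{H3}) this forces $P\le P_f(\varphi,\cG^c)<P_f(\varphi,\cG)\le P$, a contradiction. So $\nu(\cG)=1$, and applying the lower bound to $\cG$ itself then gives $P_f(\varphi,\cG)\ge P$; combined with the upper bound, $P_f(\varphi,\cG)=P$, which by~(\ref{H3}) equals $P_f(\varphi)$. The main obstacle will be the Vitali-type selection in the first step: no standard covering lemma applies directly to dynamical balls, so one must verify that the greedy procedure ordered by Gibbs times produces both the required disjointness at radius $\vep/2$ and the covering property at radius $\vep$, the latter relying crucially on the triangle-inequality observation above.
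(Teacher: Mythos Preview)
Your proof is correct and differs from the paper's in two meaningful ways.

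For the upper bound $P_f(\varphi,\cG)\le P$, the paper applies the Besicovitch covering lemma to the image balls $B(f^n(x),\vep)=f^n(B_\vep(x,n))$ to extract a subfamily with bounded overlap (using $d=\dim M$), which implicitly requires $M$ to carry a doubling or manifold structure. Your greedy Vitali selection avoids this: processing Gibbs times in increasing order and keeping a maximal disjoint subfamily at each level yields a disjoint family of nonempty open sets in a separable space, hence automatically countable; maximality together with your triangle-inequality observation then gives covering at radius $\vep$. The formalization you flag as the obstacle is routine along these lines and works on an arbitrary compact metric space.

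For the other direction the paper proves $\log\lambda\le P_f(\varphi)$ directly via the transfer operator and the open-cover definition of pressure, then establishes $\nu(\cG)=1$ separately from the same Jacobian bound you isolate, using the already-proved equality $\log\lambda=P_f(\varphi)$. You reorganize this into a single lemma: the universal inequality $\nu(B_\vep(x,n))\le e^{-nP+R_{n,\vep}\varphi(x)}$ (for which positive invariance of $\Lambda$ plays no role) gives $P_f(\varphi,\Lambda)\ge P$ whenever $\nu(\Lambda)>0$, and both conclusions then drop out of the contradiction argument. The paper's route is slightly shorter but needs the extra transfer-operator step; your packaging trades that for a unified argument and makes the dependence on~(\ref{H3}) more transparent.
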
 
\begin{proof}
Recall that $P=\log\lambda$ where $\lambda$ is the spectral radius of $\mathcal{L}_{f,\varphi}.$ We will show that $\log\lambda=P_f(\varphi).$ The inequality $\log\lambda\leq P_{f}(\varphi)$ is a consequence of $\nu$ to be a reference measure i.e. $\mathcal{L}_{f,\varphi}^{\ast}\nu= \lambda \nu$. Given a cover $\alpha$ of $M$ with diameter less than $\delta,$  let $\mathcal{U}$ be a subcover of $\alpha^{n},$ for $n\in \mathbb{N}$. We have that
\begin{eqnarray*} 
\lambda^{n}=\lambda^{n}\nu(M)=\int_{M}{\mathcal{L}^{n}_{\varphi}(\textbf{1})}d\nu\leq\sum_{U\subset\mathcal{U}}\int_{U}{e^{S_{n}\varphi(x)}}d\nu
\leq\sum_{U\subset\mathcal{U}}e^{S_{n}\varphi(U)}.
\end{eqnarray*}
Taking the infimum of all subcover $\mathcal{U}$ of $\alpha^{n}$ we obtain the inequality 
$$\log\lambda\leq\frac{1}{n}\log\inf_{\mathcal{U}\subset \alpha^{n}}\Big\{\sum_{U\subset\mathcal{U}}e^{S_{n}\varphi(U)}\Big\}\;\;{\stackrel{n\rightarrow+\infty}{\longrightarrow}}\;\; P_{f}(\varphi,\alpha)\leq P_{f}(\varphi).$$ 

For the converse, recall that every point $x\in \cG$ has infinitely many Gibbs times. Then for $\varepsilon>0$ small we can fix $N>1$ sufficiently large such that
$$\cG\subset\bigcup_{n\geq N}\bigcup_{x\in \cG_n}B_{\varepsilon}(x,n),$$
where $\cG_n$ is the set of points that have $n$ as a Gibbs time. 
Since $f^n(B_\varepsilon(x,n))$ is the ball $B(f^n(x),\varepsilon)$ in $M$ thus, by Besicovitch Covering Lemma, there exists a countable family $F_{n}\subset \cG_{n}$ such that every point $x\in \cG_{n}$ is covered by at most $d=\dim(M)$ dynamical balls $B_{\varepsilon}(x,n)$ with $x\in F_{n}.$ Hence 
$$\mathcal{F}_{N}=\left\{B_{\varepsilon}(x,n) : x\in F_{n}\;\textbf{\rm{and}}\;n\geq N\right\},$$ 
is a countable covering of $\cG$ by dynamic balls with diameter less than $\varepsilon>0$.
Recalling the definition of relative pressure, take any $\gamma>\log\lambda$ and apply Lemma~\ref{conforme} to each element in $\mathcal{F}_{N}$ to deduce that 
 \begin{eqnarray*}
\sum_{n\geq N}\sum_{B_{\varepsilon}(x,n)\in \mathcal{F}_{N}}   e^{-\gamma n+R_{n,\varepsilon}\varphi(x) }\leq K\sum_{n\geq N}e^{-(\gamma-\log\lambda)n}
\leq  Ke^{-(\gamma-\log\lambda)N}.
\end{eqnarray*}
 Taking limit in $N$ we obtain 
$$m_{f}(\varphi, \cG, \varepsilon, \gamma)=\lim_{N\rightarrow +\infty}{m_{f}(\varphi, \cG, \varepsilon, N,\gamma)}=0.$$
Since $\varepsilon>0$ is arbitrary and $\gamma>\log\lambda$ we conclude that $P_{f}(\varphi,\cG)\leq\log\lambda.$
By condition~(\ref{H3}) we get $$\log\lambda\leq P_f(\varphi)=P_{f}(\varphi, \cG)\leq\log\lambda.$$
This finishes the first part of the proposition. For the second,  given $\varepsilon>0$ small, let  $\mathcal{B}_{N}$ be the collection of dynamical balls $$\mathcal{B}_{N}=\{B_{\varepsilon}(x,n)\,/\; x\in M\;\textbf{\rm{and}}\; n\geq N\}.$$
By the definition of jacobian of $\nu$, for every finite or countable family $\mathcal{U}$ of $\mathcal{B}_{N}$ which cover $\cG^{c}$, we have 
\begin{align*}
\nu(\cG^{c})\leq\nu(\mathcal{U})\leq\sum_{B_{\varepsilon}(x, n)\in\mathcal{U}}{\lambda^{-n}e^{S_{n}\varphi(B_{\varepsilon}(x, n))}}=\sum_{B_{\varepsilon}(x, n)\in\mathcal{U}}e^{-(\log\lambda) n+R_{n, \varepsilon}\varphi(x)}.
\end{align*}
Taking the infimum over $\mathcal{B}_{N}$, by the definition of relative pressure, it follows
$$\nu(\cG^{c})\leq m_{f}(\varphi, \cG^{c}, N, \log\lambda).$$
Since $P_{f}(\varphi, \cG^{c})< P_{f}(\varphi)=P_{f}(\varphi, \cG)=\log\lambda$, we obtain
$$\nu(\cG^{c})\leq \lim_{N\rightarrow +\infty}m_{f}(\varphi, \cG^{c}, N, \log\lambda)=m_{f}(\varphi, \cG^{c},\log\lambda)=0.$$
Thus $\nu(\cG)=1$.

\end{proof}

Notice that by the proof of Proposition~\ref{zooming measure}, $\lambda$ is the unique real eigenvalue of $\mathcal{L}_{f,\varphi}^{\ast}$ because $\lambda$ is uniquely determined by the topological pressure of the system $(f, \varphi)$. 

To conclude this subsection, we note that every $f$-invariant set contains a topological disk up to a set of zero $\nu$-measure. 
\begin{lemma} \label{disco} If $A$ is a $f$-invariant set with positive $\nu$-measure, then there exists a topological disk $\Delta$ of radius $\delta/4$ so that $\nu(\Delta\backslash A)=0$
\end{lemma}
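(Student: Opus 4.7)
The plan is to produce the desired disk as a ball $B(p,\delta/4)$, where $p$ is an accumulation point of the forward orbit, along Gibbs times, of a suitably chosen point in $A\cap\cG$. The $f$-invariance of $A$ together with the Gibbs control of Lemma~\ref{conforme} will transfer density from dynamical balls at Gibbs times to round Euclidean balls of fixed radius.

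First, since Proposition~\ref{zooming measure} gives $\nu(\cG)=1$, the hypothesis $\nu(A)>0$ implies $\nu(A\cap\cG)>0$. Fix $\vep$ with $\delta/2<\vep<\delta_0/2$. The initial step is to select $x\in A\cap\cG$ which is a \emph{density point} of $A$ with respect to the basis of dynamical balls at its Gibbs times $(n_k(x))_k$, i.e.,
$$
\lim_{k\to\infty}\frac{\nu(B_\vep(x,n_k(x))\setminus A)}{\nu(B_\vep(x,n_k(x)))}=0.
$$
Producing such density points for $\nu$-almost every $x\in A\cap\cG$ is the most delicate point of the argument: in the absence of expansivity the dynamical balls need not shrink to $x$, so the classical Lebesgue differentiation theorem does not apply directly. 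However, Lemma~\ref{conforme} shows that $\nu$ is effectively doubling on the basis of dynamical balls at Gibbs times, and a Vitali--Besicovitch covering argument adapted to this basis yields the required differentiation.

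Next, I would transport this density to a Euclidean ball via the Jacobian identity $J_\nu f^n=\lambda^n e^{-S_n\varphi}$ together with the Bowen property~(\ref{H2}). A direct computation, using $|S_{n_k}\varphi(y)-S_{n_k}\varphi(z)|\le V$ for $y,z\in B_\vep(x,n_k)$, yields for every measurable $E\subset B_\vep(x,n_k)$ the bounded-distortion estimate
$$
\frac{\nu(f^{n_k}(E))}{\nu(B(f^{n_k}(x),\vep))}\le e^{2V}\,\frac{\nu(E)}{\nu(B_\vep(x,n_k))}.
$$
Applying this with $E=B_\vep(x,n_k)\setminus A$, and using that the $f$-invariance of $A$ together with the bijectivity of $f^{n_k}|_{B_\vep(x,n_k)}$ gives $B(f^{n_k}(x),\vep)\setminus A\subset f^{n_k}(B_\vep(x,n_k)\setminus A)$, one obtains
$$
\nu(B(f^{n_k}(x),\vep)\setminus A)\le e^{2V}\,\nu(B(f^{n_k}(x),\vep))\,\frac{\nu(B_\vep(x,n_k)\setminus A)}{\nu(B_\vep(x,n_k))},
$$
and the right-hand side tends to $0$ as $k\to\infty$ by the choice of $x$.

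Finally, by compactness of $M$, I would extract a subsequence with $f^{n_{k_j}}(x)\to p\in M$. For $j$ sufficiently large one has $B(p,\delta/4)\subset B(f^{n_{k_j}}(x),\vep)$, hence
$$
\nu(B(p,\delta/4)\setminus A)\le\liminf_{j\to\infty}\nu(B(f^{n_{k_j}}(x),\vep)\setminus A)=0,
$$
and $\Delta:=B(p,\delta/4)$ is the desired topological disk. The principal obstacle is the differentiation statement for the basis of dynamical balls at Gibbs times: one must establish density $1$ of $A$ in those balls without assuming that they shrink to the point, and this is precisely where the Gibbs property of $\nu$ (via Lemma~\ref{conforme}) plays the essential role.
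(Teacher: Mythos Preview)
The paper does not prove this lemma; it simply refers to \cite{VV10}. Your outline follows exactly the argument there: choose a density point of $A$ along dynamical balls at the special times, use bounded distortion to transport density to the round image balls, and extract a limit point by compactness. Your steps after the density-point claim --- the distortion estimate, the inclusion $B(f^{n_k}(x),\vep)\setminus A\subset f^{n_k}(B_\vep(x,n_k)\setminus A)$ from invariance, and the extraction of $p$ --- are correct and standard.

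The gap is precisely where you place it, but your proposed resolution does not close it. In \cite{VV10} the dynamical balls at hyperbolic times \emph{shrink to points}, because hyperbolic times carry backward contraction (cf.\ Lemma~\ref{distortion}): one has $B_\vep(x,n)\subset B(x,e^{-cn/4}\vep)$. Lebesgue density for Euclidean balls (available via the Besicovitch covering lemma, which the paper invokes elsewhere) then transfers to dynamical balls by inclusion plus bounded distortion, and the density-point hypothesis is immediate. Under the bare condition~(\ref{estrela}) of this paper, no contraction is assumed and the dynamical balls $B_\vep(x,n_k(x))$ need not shrink. Your appeal to the Gibbs estimate as a ``doubling'' condition together with a Vitali--Besicovitch argument yields at best a weak-type maximal inequality; the passage from a maximal inequality to pointwise differentiation still requires the basis to contract to points --- this is exactly where one uses that continuous functions are nearly constant on small sets in the proof of the Lebesgue differentiation theorem. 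Without shrinking, there is no reason for $\nu(B_\vep(x,n_k)\setminus A)/\nu(B_\vep(x,n_k))$ to tend to $0$ at a point of $A$. The argument is therefore incomplete in the stated generality; it goes through once one assumes, as in all the examples of Section~\ref{exemplos}, that the dynamical balls at Gibbs times have diameter tending to zero.
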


For a proof of this result, see~\cite{VV10}.

\section{Weak Gibbs measure} \label{weak}
In the previous section, we proved the existence of a reference measure $\nu$ associated to the eigenvalue $\lambda=e^{P_{f}(\varphi)}.$ Moreover, we established the Gibbs property of $\nu$ for times satisfying the condition~(\ref{estrela}).
Here, we show the existence and uniqueness of an ergodic measure  $\mu_\varphi$ that is absolutely continuous with respect to $\nu$, and consequently, we conclude that $\mu_\varphi$ is the weak Gibbs measure stated in Theorem~\ref{Gibbs base}. Finally, we prove that $\mu_\varphi$ is the unique equilibrium state of $(f, \varphi)$, thereby establishing Theorem~\ref{ee base}.

\subsection{Absolutely continuous invariant measure} Recall that $\cG$ is the set satisfying the condition~(\ref{H1}). Let $\cG_{j}$ be the set of points whose have $j$ as Gibbs time. Consider the sequences $\left(\mu_{n}\right)_{n}$ and $(\eta_n)_{n}$  defined by  
$$\eta_{n}:=\frac{1}{n}\sum_{j=0}^{n-1}{f_{*}^{j}(\nu|_{\cG_{j}})}\;\; and \;\;\mu_{n}:=\frac{1}{n}\sum_{j=0}^{n-1}f_{*}^{j}(\nu|_{\cG}).$$
\begin{lemma}\label{jacobiano}
For each $n\in \mathbb{N}$, the measure $\eta_n$ is absolutely continuous with respect to $\nu$.
\end{lemma}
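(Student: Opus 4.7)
The plan is the following. Since $\eta_n = \frac{1}{n}\sum_{j=0}^{n-1} f_*^j(\nu|_{\cG_j})$ is a convex combination of the pushforwards $f_*^j(\nu|_{\cG_j})$, it suffices to check that each of these measures is absolutely continuous with respect to $\nu$. Equivalently, I will prove that for every Borel set $A$ with $\nu(A)=0$ and every $j\ge 0$, one has $\nu\bigl(\cG_j\cap f^{-j}(A)\bigr)=0$.

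The key ingredient is the explicit Jacobian of $\nu$ already established in Section~\ref{reference measure}. From $\mathcal{L}_{f,\varphi}^{\ast}\nu=\lambda\nu$ we have $J_\nu f = \lambda e^{-\varphi}$, so $J_\nu f^j(y) = \lambda^j e^{-S_j\varphi(y)}$, which is continuous and bounded below by the positive constant $c_j:=\lambda^j e^{-j\sup\varphi}$. Moreover, condition~(\ref{estrela}) tells us that for every $x\in\cG_j$ and every $0<\varepsilon<\delta_0$, the map $f^j$ restricts to a homeomorphism from $B_\varepsilon(x,j)$ onto $B(f^j(x),\varepsilon)$; in particular, $f^j$ is locally injective on $\cG_j$.

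Using the second countability of $M$, I would then extract a countable family $\{U_i\}_{i\in\N}$ of measurable sets, each contained in some dynamical ball $B_\varepsilon(x_i,j)$ with $x_i\in\cG_j$, whose union covers $\cG_j$. For any Borel $A$ with $\nu(A)=0$, the injectivity of $f^j$ on $U_i$ together with the change of variables formula yields
$$
\nu\bigl(U_i\cap f^{-j}(A)\bigr)\le c_j^{-1}\int_{U_i\cap f^{-j}(A)} J_\nu f^j\,d\nu = c_j^{-1}\,\nu\bigl(f^j(U_i\cap f^{-j}(A))\bigr)\le c_j^{-1}\,\nu(A)=0,
$$
since $f^j(U_i\cap f^{-j}(A))\subseteq A$. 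Summing over $i$ gives $\nu(\cG_j\cap f^{-j}(A))=0$, and averaging over $j$ produces $\eta_n(A)=0$.

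Since $\nu$ carries an explicit Jacobian uniformly bounded away from zero, and the sets $\cG_j$ come, by definition, equipped with dynamical balls on which $f^j$ is a homeomorphism, I do not foresee any serious obstacle: the argument is essentially a routine change-of-variables computation. The only mildly delicate point is producing the countable covering by sets of injectivity, but this follows immediately from local injectivity of $f^j$ on $\cG_j$ and second countability of $M$.
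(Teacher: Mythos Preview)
Your argument is correct for the lemma as stated: since $f$ is a local homeomorphism with continuous Jacobian $J_\nu f^j=\lambda^j e^{-S_j\varphi}$ bounded below by $c_j>0$, a countable covering by injectivity domains plus the change-of-variables formula immediately give $\nu(f^{-j}(A)\cap\cG_j)=0$ whenever $\nu(A)=0$. In fact you do not even need property~(\ref{estrela}) here, because $f^j$ is already a local homeomorphism on all of $M$.

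The paper proceeds differently, and the difference matters for what comes next. Rather than merely proving absolute continuity, the paper bounds the Radon--Nikodym derivative uniformly: for any small set $G$ it uses the inverse branches $f_i^{-j}:B(f^j(x),\delta_0/2)\to V_i$ together with the Gibbs estimate of Lemma~\ref{conforme} to obtain
\[
f_*^j(\nu|_{\cG_j})(G)\le K(\delta/2)\,\xi_\delta^{-1}\,\nu(G),
\]
a bound that does \emph{not} depend on $j$. Your bound $c_j^{-1}=\lambda^{-j}e^{j\sup\varphi}$ grows with $j$ and therefore only yields absolute continuity of each $\eta_n$ separately. In Proposition~\ref{acim} one passes to a weak$^*$ accumulation point $\eta$ of $(\eta_n)_n$ and asserts that $\eta\ll\nu$; this step requires the densities $d\eta_n/d\nu$ to be uniformly bounded in $n$, which the paper's argument delivers but yours does not. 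So while your proof suffices for the literal statement of the lemma, it would leave a gap in the subsequent application unless you also supply the uniform density bound.
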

\begin{proof}
Consider $G$ any measurable set with diameter $\diam G<\delta/4$ and positive $\nu$-measure. Notice that if $f_{*}^{j}(\nu|_{\cG_{j}})(G)=0$ the result follows. On the other hand, if $f_{*}^{j}(\nu|_{\cG_{j}})(G)\neq 0$ then $G\subset B=B(f^j(x),\delta_0/2)$ for some $x\in \cG_j$ and thus
	$$f_{*}^{j}(\nu|_{\cG_{j}})(G)=\nu(f^{-j}(G)\cap \cG_j)=\sum_i\nu(f^{-j}_i(G\cap B)),$$
	where $f^{-j}_i:B\rightarrow V_i$ are inverse branches of $f^j$. Applying the Gibbs property and recalling that the $\nu$-measure of any open ball of radius $\delta$ is at least $\xi_\delta$, we obtain: 
	$$
	f_{*}^{j}(\nu|_{\cG_{j}})(G)\le K(\delta/2)\sum_i\frac{\nu(G)}{\nu(B)}\nu(V_i)\le K(\delta/2)\xi_\delta^{-1}\nu(G).
	$$
This proves that $\eta_n$ is absolutely continuous to $\nu$.
\end{proof}

\begin{proposition}\label{acim} There exists only one ergodic invariant measure $\mu_\varphi$ absolutely continuous with respect to $ \nu$. 
\end{proposition}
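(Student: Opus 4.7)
My plan is two-pronged: extract an $f$-invariant measure absolutely continuous with respect to $\nu$ from the Cesaro sequences $\mu_n$ and $\eta_n$, and then prove uniqueness by a basin argument based on Lemma~\ref{disco}, the density of preorbits, and the quasi-invariance of $\nu$.

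For existence, I will take a weak-$*$ accumulation point $\mu$ of $\mu_n$; since $\nu(\cG)=1$ by Proposition~\ref{zooming measure}, the sequence $\mu_n=\frac{1}{n}\sum_{j=0}^{n-1}f_*^j\nu$ is the classical Krylov-Bogolyubov sequence, so any such $\mu$ is $f$-invariant. Passing to a further subsequence, I extract a weak-$*$ limit $\eta$ of $\eta_n$; Lemma~\ref{jacobiano} provides the uniform density bound $\eta_n\leq K'\nu$ with $K'=K(\delta/2)\xi_\delta^{-1}$, which passes to the limit to give $\eta\leq K'\nu$, hence $\eta\ll\nu$. Since $\cG_j\subset\cG$ modulo $\nu$-null, the inequality $\eta_n\leq\mu_n$ transfers to $\eta\leq\mu$. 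Writing $\eta_n(M)=\int N_n(x)/n\,d\nu(x)$, where $N_n(x)$ counts the Gibbs times of $x$ in $\{0,\dots,n-1\}$, condition~(\ref{H1}) together with an Egorov/Fatou-type selection provides a subsequence along which $\eta_n(M)\geq c>0$, so $\eta(M)\geq c>0$. Decomposing $\mu=\mu_{ac}+\mu_s$ with respect to $\nu$ then gives $\mu_{ac}\geq\eta>0$. Because the Jacobian $J_\nu f=\lambda e^{-\varphi}$ is bounded away from $0$ and $\infty$, the measure $\nu$ is quasi-invariant in both directions and $f$ preserves $\nu$-null sets; the Lebesgue decomposition therefore commutes with $f_*$, and the $f$-invariance of $\mu$ transfers to $\mu_{ac}$. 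Normalizing $\mu_{ac}$ to a probability and picking any ergodic component produces an ergodic $f$-invariant probability $\mu_\varphi\ll\nu$.

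For uniqueness, suppose $\mu_1,\mu_2$ are two distinct ergodic $f$-invariant probability measures absolutely continuous with respect to $\nu$, and let $A_1,A_2$ be their ergodic basins. The Birkhoff ergodic theorem forces $A_1\cap A_2=\emptyset$, while $\mu_i\ll\nu$ gives $\nu(A_i)>0$. By Lemma~\ref{disco}, each $A_i$ contains, modulo a $\nu$-null set, a topological disk $\Delta_i$ of radius $\delta/4$. Choosing $x_i\in\Delta_i\cap\cG\cap A_i$ (possible since $\nu(\cG)=1$) and any Gibbs time $n=n_k(x_i)$, condition~(\ref{estrela}) gives $f^n(B_\varepsilon(x_i,n))=B(f^n(x_i),\varepsilon)$; for $\varepsilon$ small the dynamical ball $B_\varepsilon(x_i,n)$ lies inside $\Delta_i$, and forward invariance of $A_i$ together with preservation of $\nu$-null sets by $f$ places the open ball $V_i:=B(f^n(x_i),\varepsilon)$ inside $A_i$ modulo a $\nu$-null set. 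The density of preorbits then yields $\bigcup_{k\geq 0}f^k(V_i)=M$, and one more use of forward invariance together with preservation of null sets gives $\nu(A_i)=1$. Applied to both basins this forces $\nu(A_1\cap A_2)=1$, contradicting $A_1\cap A_2=\emptyset$.

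The main obstacle lies in establishing the lower bound $\eta(M)>0$: since~(\ref{H1}) is only a pointwise $\limsup$ estimate, the averages $\int N_n(x)/n\,d\nu$ need not be bounded below along every subsequence, and a careful Egorov-type selection (exploiting the forward invariance of $\cG$ and the uniform density bound from Lemma~\ref{jacobiano}) is required to extract a subsequence with the desired property.
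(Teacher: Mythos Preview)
Your proposal is correct and follows essentially the same strategy as the paper's proof: existence via weak-$*$ limits of the Ces\`aro sequences $\mu_n,\eta_n$ together with the uniform density bound from Lemma~\ref{jacobiano}, and uniqueness via Lemma~\ref{disco}, density of preorbits, and quasi-invariance of $\nu$.

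A few minor remarks. Your concern about the $\limsup$ in~(\ref{H1}) is well-placed, but it dissolves cleanly: since $0\le N_n(x)/n\le 1$, reverse Fatou gives $\limsup_n\eta_n(M)=\limsup_n\int N_n/n\,d\nu\ge\int\limsup_n N_n/n\,d\nu\ge\theta$, so a subsequence with $\eta_{n_k}(M)\ge\theta/2$ exists without any Egorov argument. The paper simply asserts the analogous bound (inequality~(\ref{theta})) for all large $n$, which as you note is not quite justified by a $\limsup$ hypothesis; your formulation is the more careful one. You are also more explicit than the paper about why the absolutely continuous part $\mu_{ac}$ inherits $f$-invariance (via the two-sided boundedness of $J_\nu f$), a point the paper leaves implicit. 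For uniqueness the paper takes the slightly shorter route of producing a single $n$ with $f^n(\Delta_1)\cap\Delta_2\neq\emptyset$ and reading off $\nu(B(\mu_1)\cap B(\mu_2))>0$, whereas you show each basin has full $\nu$-measure; both arguments use the same ingredients. Your detour through a Gibbs time to produce the open ball $V_i$ is unnecessary, since $\Delta_i$ itself is already an open disk with $\nu(\Delta_i\setminus A_i)=0$, but it does no harm.
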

\begin{proof}
Let $\left(\mu_{n}\right)_{n}$ and $(\eta_n)_{n}$ as above defined by
$$\eta_{n}:=\frac{1}{n}\sum_{j=0}^{n-1}{f_{*}^{j}(\nu|_{\cG_{j}})}\;\; \mbox{and} \;\;\mu_{n}:=\frac{1}{n}\sum_{j=0}^{n-1}f_{*}^{j}(\nu|_{\cG}).$$
We observe that there exist  $n_{0}\in\mathbb{N}$ such that for all $n\geq n_{0}$ it holds
\begin{equation}\label{theta}
\eta_{n}(M)\,\geq\,\frac{1}{n}\sum_{j=0}^{n-1}{\nu(\cG_{j})}\,\geq\,\frac{1}{n}\sum_{j=0}^{n-1}{\nu(\cG\cap\cG_{j})}\,\geq\,\frac{\theta}{2}\nu(\cG).
\end{equation} 
In fact, by definition of positive frequency (condition~(\ref{H1})) we can consider a subset $B\subset \cG$ with $\nu(B)\geq \nu(\cG)/2$ and $n_0$ such that for every $x\in B$ and $n\geq n_{0}$ there are Gibbs times $0<n_ 1<n_2<\cdots<n_ l\leq n$ for $x$ with $l\geq \theta n.$ Hence, for every $n\geq n_0$ it follows that
$$\frac{1}{n}\sum_{j=1}^{n-1}\nu(\cG\cap\cG_{j})=\frac{1}{n}\sum_{j=1}^{n-1}\int_{\cG}1_{\cG_{j}} d\nu\geq \frac{l}{n} \nu(B)\geq \frac{\theta}{2} \nu(\cG)$$
and thus inequality~(\ref{theta}) is true.

Moreover, applying Lemma~\ref{jacobiano}, we have that $\eta_{n}$ is absolutely continuous with respect to $\nu$.
Since $M$ is a compact metric space and $f$ is continuous, we can take a subsequence $(n_{k})$  such that $\mu_{n_{k}}$ and $\eta_{n_{k}}$ converge in the weak* topology to $\mu$ and $\eta$ respectively. Notice that, by construction, $\mu$ is $f$-invariant and $\eta$ is a component of $\mu$ absolutely continuous with respect to $\nu$. Hence, we can decompose $\mu=\mu_{\varphi}+\mu_{s}$ with $\mu_{\varphi}$ absolutely continuous to $\nu$ and  $\mu_{s}$ singular to $\nu$. Since $\mu_{\varphi}(\cG)\geq\eta(\cG)>0$ we conclude the existence of some $f$-invariant probability measure $\mu_{\varphi}$ absolutely continuous to $\nu.$

Now we prove the uniqueness. Given two measures $\mu_{1}$ and $\mu_2$ absolutely continuous to $\nu$, by the theorem of ergodic decomposition, we can suppose that $\mu_{1}$ and $\mu_2$ are ergodic measures. Let $B(\mu_i)$ be the basin of attraction of $\mu_i,$ for  $i=1,2.$ Since $B(\mu_{1})$ and $B(\mu_{2})$ are disjoint invariant sets with positive $\nu$ measure, we can apply Lemma~\ref{disco} to obtain topological disks $\Delta_{i}$ of radius $\delta/4$ with $\nu(\Delta_{i}\setminus B(\mu_{i})=0$, for $i=1,2$.  From the density of pre-orbit for points in $\cG$ we have $f^{n}(\Delta_{1})\cap\Delta_{2}\neq\emptyset$ for some $n\in\mathbb{N}$. By invariance of $B(\mu_{i})$ and since 
 $\nu$ is an open measure  we obtain $\nu(B(\mu_{1})\cap B(\mu_{2}))>0$ wich implies $\mu_1=\mu_2.$
\end{proof}

In the next lemma we prove that $\mu_{\varphi}$ is equivalent to $\nu$ and thus it is also a weak Gibbs measure. In particular, $\mu_\varphi$ satisfies the property given by Corollary~\ref{gibbs_n}.

\begin{lemma} Let $\mu_\varphi$ be the unique ergodic invariant measure absolutely continuous with respect to $\nu$. Then the density $d\mu_{\varphi}/d\nu$ is bounded away from zero and infinity. In particular, $\mu_{\varphi}$ is a weak Gibbs measure.
\end{lemma}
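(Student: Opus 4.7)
Write $h := d\mu_\varphi/d\nu$. We must show $0 < c \leq h \leq C < \infty$ $\nu$-almost everywhere. Once this is granted, the weak Gibbs property of $\mu_\varphi$ follows immediately from Lemma~\ref{conforme} applied to
$\mu_\varphi(B_\varepsilon(x,n_k)) = \int_{B_\varepsilon(x,n_k)} h\,d\nu$,
which then lies between $c\,\nu(B_\varepsilon(x,n_k))$ and $C\,\nu(B_\varepsilon(x,n_k))$, i.e.\ between constant multiples of $\exp(S_{n_k}\varphi(y)-n_k P)$ along the Gibbs times of $x$.

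\emph{Upper bound.} The proof of Lemma~\ref{jacobiano} actually provides the uniform estimate $\eta_n \leq C_0\,\nu$ with $C_0 = K(\delta/2)\xi_\delta^{-1}$, so $g_n := d\eta_n/d\nu$ satisfies $\|g_n\|_{L^\infty(\nu)}\leq C_0$. By Banach-Alaoglu on $L^\infty(\nu) = L^1(\nu)^*$ a subsequence converges weak* to some $g \in L^\infty(\nu)$ with $\|g\|_\infty \leq C_0$, and the frequency estimate from the proof of Proposition~\ref{acim} yields $\int g\,d\nu \geq \theta/2$. To transfer the bound to the density of the $f$-invariant measure $\mu_\varphi$, I would average $g\,\nu$ under the normalized Ruelle operator $\tilde{\mathcal{L}} := \lambda^{-1}\mathcal{L}_{f,\varphi}$: since $\tilde{\mathcal{L}}^*\nu = \nu$ and $\tilde{\mathcal{L}}$ is a positive contraction on $L^1(\nu)$, the Hopf/Chacon-Ornstein ergodic theorem yields $\nu$-a.e.\ convergence of $\frac{1}{N}\sum_{k=0}^{N-1}\tilde{\mathcal{L}}^k g$ to an $\tilde{\mathcal{L}}$-invariant density $\bar g \in L^1(\nu)$. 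The measure $\bar g\,\nu$ is then $f$-invariant, absolutely continuous, with total mass $\geq \theta/2$; by the uniqueness part of Proposition~\ref{acim} it is a positive multiple of $\mu_\varphi$. A weak* comparison in $L^\infty(\nu)$ preserves $\|\bar g\|_\infty \leq C_0$, hence $h \leq 2C_0/\theta$.

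\emph{Lower bound.} Iterating the eigenfunction identity $h = \tilde{\mathcal{L}}^n h$ gives
$$h(x) = \sum_{y \in f^{-n}(x)} e^{S_n\varphi(y) - nP}\,h(y), \qquad n\geq 1.$$
Since $\int h\,d\nu = 1$ there is $\varepsilon_0 > 0$ such that the set $A_{\varepsilon_0} := \{h \geq \varepsilon_0\} \cap \cG$ has positive $\nu$-measure. For an arbitrary $x \in M$, the density of the pre-orbit $\{f^{-n}(x)\}_{n\geq 0}$ assumed on $f$, together with the positivity of $\nu$ on open sets (Lemma~\ref{conforme}), produces some $n \in \N$ and some $y \in f^{-n}(x) \cap A_{\varepsilon_0}$ sitting inside a dynamical ball $B_\varepsilon(z,n)$ with $z \in \cG$ having $n$ as a Gibbs time. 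The Gibbs lower bound in Lemma~\ref{conforme} then forces $e^{S_n\varphi(y)-nP} \geq K^{-1}\xi_\varepsilon$, so the identity above gives $h(x) \geq K^{-1}\xi_\varepsilon \varepsilon_0 =: c > 0$ uniformly in $x$.

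The main obstacle is the upper bound: the $L^\infty(\nu)$ bound on $d\eta_n/d\nu$ does not descend automatically to the density of $\mu_\varphi$, because the convergence $\mu_n \to \mu$ is only weak* as measures and weak* measure convergence forgets density information; moreover $\eta_n$ is not $f$-invariant, and the iterates $\tilde{\mathcal{L}}^k g$ need not stay bounded in $L^\infty(\nu)$ (only in $L^1(\nu)$). The resolution is to work with weak* $L^\infty(\nu)$ compactness together with the Markov ergodic theorem for $\tilde{\mathcal{L}}$ to obtain an invariant density that preserves the bound, and then to identify this density with $h$ via the uniqueness of the ergodic absolutely continuous invariant measure in Proposition~\ref{acim}.
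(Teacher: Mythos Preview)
Your lower bound argument has a real gap. The Gibbs estimate in Lemma~\ref{conforme} gives $e^{S_n\varphi(y)-nP}\asymp \nu(B_\varepsilon(z,n))$, and the measure of a \emph{dynamical} ball can be exponentially small in $n$; the constant $\xi_\varepsilon$ bounds the $\nu$-measure of the \emph{image} ball $B(f^n(z),\varepsilon)$, not of $B_\varepsilon(z,n)$. Hence there is no uniform lower bound $e^{S_n\varphi(y)-nP}\ge K^{-1}\xi_\varepsilon$. Since moreover your $n$ depends on the point $x$, a single summand in $h(x)=\sum_{y\in f^{-n}(x)}e^{S_n\varphi(y)-nP}h(y)$ can be arbitrarily small, and the conclusion $h(x)\ge c$ does not follow.

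The paper's route is different and avoids this trap. It fixes a \emph{single} depth $s=s(\varepsilon)$, independent of the point, chosen so that $f^{-s}(y)$ is $\varepsilon$-dense for every $y$. It then shows that the \emph{sum} $\sum_{z\in f^{-s}(y)} d\eta_n/d\nu(z)$ is uniformly bounded below: rewriting it as $\frac1n\sum_j\sum_{z}\sum_{w\in f^{-j}(z)\cap\cG_j}\lambda^{-j}e^{S_j\varphi(w)}$, applying Lemma~\ref{conforme} to each term (now $j$ \emph{is} a Gibbs time for $w$), and using that the dynamical balls $B_\varepsilon(w,j)$ with $w\in f^{-(j+s)}(y)\cap\cG_j$ cover $\cG_j$, one gets $\ge K^{-1}\frac1n\sum_j\nu(\cG_j)\ge K^{-1}\theta$. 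The eigenfunction identity $h(y)=\lambda^{-s}\sum_{z\in f^{-s}(y)}e^{S_s\varphi(z)}h(z)$ together with $h\ge d\eta/d\nu$ then yields $h(y)\ge \lambda^{-s}e^{s\inf\varphi}K^{-1}\theta$, and since $s$ is fixed this is uniform. The point you are missing is that one must sum over \emph{all} preimages at a fixed depth and use a covering argument, rather than isolate a single preimage at a variable depth.

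For the upper bound, your route through the Chacon--Ornstein theorem is more elaborate than what the paper does and, as you yourself note, is incomplete: there is no reason for $\tilde{\mathcal L}^k g$ to stay bounded in $L^\infty(\nu)$, so the claim that $\|\bar g\|_\infty\le C_0$ is not justified by your argument. The paper does not spell out the upper bound in this lemma; it is meant to come directly from the uniform density bound $d\eta_n/d\nu\le K(\delta/2)\xi_\delta^{-1}$ established in Lemma~\ref{jacobiano}, which passes to the weak$^*$ limit $\eta$ and then to $\mu_\varphi$ via the construction in Proposition~\ref{acim}.
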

\begin{proof}  Since $\mu_{\varphi}$ is a $f$-invariant measure absolutely continuous to $\nu$, the density $h:=d\mu_{\varphi}/d\nu$ is a $L^{1}(\nu)$ function and it satisfies: 
\begin{eqnarray*} 
\int{\psi\mathcal{L}_{\varphi}(h)}d\nu&=&\int{\mathcal{L}_{\varphi}(\psi\circ f\cdot h)}d\nu=\int{\psi\circ f\cdot h}d\mathcal{L}^{*}_{\varphi}(\nu)
=\lambda\!\int{\psi\circ f\cdot h}d\nu\\
&=&\lambda\int{\psi\circ f}\,d\mu=\lambda\int{\psi}\,d\mu=\int{\psi\cdot \lambda h}\,d\nu\\
\end{eqnarray*} 
for every continuous function $\psi.$ Thus, $\mathcal{L}_{\varphi}(h)=\lambda h$ in $\nu$-almost every where. Recalling the construction of $\mu_\varphi$ we have that $h=d\mu_{\varphi}/d\nu\geq d\eta/d\nu$  where $\eta$ is an accumulation point of the sequence $$\eta_{n}:=\frac{1}{n}\sum_{j=0}^{n-1}{f_{*}^{j}(\nu|_{\cG_{j}})}.$$
We will show that  $d\eta_{n}/d\nu$ is uniformly bounded away from zero. 

Fix some $\varepsilon<\delta_0$. Given $y\in\cG$, by the density of its pre-image, there exists some $s$ depending only $\varepsilon$ such that we can cover each $\cG_{j}$ by dynamical balls $\{B_{\varepsilon}(f^{-j}(z), j)\,/\;z\in f^{-s}(y)\}.$ Thus, for every $n>0$ we have 
\begin{eqnarray*}
\sum_{z\in f^{-s}(y)}{\frac{d\eta_{n}}{d\nu}(z)}&=&\sum_{z\in f^{-s}(y)}\frac{1}{n}\sum_{j=0}^{n-1}{\frac{df_{*}(\nu|_{\cG_{j}})}{d\nu}(z)}\\\\
&=&\sum_{z\in f^{-s}(y)}\frac{1}{n}\sum_{j=0}^{n-1}\big\{\sum_{w\in f^{-j}(z)}{\lambda^{-j}e^{S_{j}\varphi(w)}}\big\}\\\\
&\geq&\sum_{z\in f^{-s}(y)}\frac{1}{n}\sum_{j=0}^{n-1}\big\{\sum_{w\in f^{-j}(z)}\!\!\!\!K^{-1}(\vep)\nu(B_{\varepsilon}(w,j))\big\}\\\\
&\geq& K^{-1}(\vep)\frac{1}{n}\sum_{j=0}^{n-1}{\nu(\cG_{j})}\\\\
&\geq& K^{-1}(\vep)\theta.
\end{eqnarray*} 
As $\mathcal{L}_{\varphi}(h)=\lambda h$ we conclude that 
\begin{eqnarray*}
h(y)=\lambda^{-s}\sum_{z\in f^{-s}(y)}{e^{S_{s}\varphi(z)}h(z)}&\geq& \lambda^{-s}\sum_{z\in f^{-s}(y)}{e^{S_{s}\varphi(z)}\frac{d\eta}{d\nu}(z)}\\ \\
&\geq& \lambda^{-s}e^{s\inf \varphi}K^{-1}(\vep)\theta>0.
\end{eqnarray*}
This proves that $\mu_\varphi$ is equivalent to $\nu$: there exists $K_1=K_1(\vep)>0$ s.t.
$${K_{1}}^{-1}\nu(B_\varepsilon(x, n))\leq \mu_{\varphi}(B_\varepsilon(x, n))\leq {K_ 1}\nu(B_\varepsilon(x, n)) $$
for $\nu$-almost every $x\in M$, every $\varepsilon>0$ and every $n\geq 1$. In particular, applying Lemma~\ref{conforme}, we conclude that $\mu_\varphi$ is a weak Gibbs measure: for $n$ a Gibbs time for $x\in \cG$ we have 
\begin{eqnarray*}
 K^{-1} \leq  \frac{\mu_{\varphi}(B_\varepsilon(x, n))}{\exp(S_n\varphi(y) - nP_{f}(\varphi))}  \leq K
\end{eqnarray*}
for all $y\in B_\varepsilon(x, n)$. By a slight abuse of notation, we also use $K$ to denote the Gibbs constant of $\mu_\varphi$.
\end{proof}

\subsection{Equilibrium state for local homeomorphisms.} Now, we will prove Theorem~\ref{ee base}. First, we observe that the weak Gibbs measure $\mu_{\varphi}$ of Theorem~\ref{Gibbs base} is an equilibrium state for $(f, \varphi)$: given $x \in \cG$, let $(n_i)$ be a sequence of Gibbs times. Since $\mu_{\varphi}$ is ergodic, we can apply Brin-Katok's local entropy formula and the Gibbs property to obtain 
	$$	h_{\mu_{\varphi}}(f)=h_{\mu_{\varphi}}(f, x) = -\lim \frac{1}{n_i}\log\mu_{\varphi}(B_{\varepsilon}(x, n_ i)) = P_{f}(\varphi)-\int \varphi\,d\mu_{\varphi}.
	$$  
This implies that $\mu_{\varphi}$ is an equilibrium state.

In the next result, we prove that there is no other equilibrium state associated to $(f, \varphi).$ To this end, we assume the existence of a generating partition $\tau$ for $\cG$; that is, 
$\tau$ is a partition of $\cG$ with finite entropy such that its iterates, defined by $\tau^n=\vee_{j=0}^{n-1}f^{-j}\tau$ for $n\geq 1$, generate the Borel $\sigma$-algebra of $\cG.$
\begin{theorem} Let $f:M\to M$ be a local homeomorphism satisfying~(\ref{H1}) and let ${\varphi}: M \to \mathbb{R}$ be a continuous potential satisfying~(\ref{H2}) and (\ref{H3}). Suppose that $\cG$ admits a generating partition. Then the unique weak Gibbs measure $\mu_{\varphi}$ is the unique equilibrium state of $(f, \varphi).$
\end{theorem}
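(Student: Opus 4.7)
The existence of an equilibrium state is already settled in the paragraph preceding the statement: the weak Gibbs measure $\mu_\varphi$ is an equilibrium state via Brin--Katok. My plan for the uniqueness half is to take an arbitrary ergodic equilibrium state $\eta$ and identify it with $\mu_\varphi$ in three steps: first localize $\eta$ on $\cG$, then use the generating partition plus Rokhlin's formula together with a fiberwise Gibbs inequality to force an equality of jacobians, and finally promote this pointwise identity to $\eta = \mu_\varphi$.

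\emph{Step 1 (localizing on $\cG$).} Because $\cG$ is positively invariant, $\cG \subset f^{-1}(\cG)$; combined with the $f$-invariance of $\eta$ this forces $\eta(f^{-1}(\cG) \setminus \cG) = 0$, so $\cG$ is $\eta$-mod-zero invariant and ergodicity gives $\eta(\cG) \in \{0,1\}$. If $\eta(\cG) = 0$, the Pesin--Pitskel variational principle for the relative topological pressure yields $h_\eta(f) + \int\varphi\, d\eta \le P_f(\varphi, \cG^c)$, which by hypothesis~(\ref{H3}) is strictly less than $P_f(\varphi)$, contradicting the equilibrium property. Hence $\eta(\cG) = 1$.

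\emph{Step 2 (Rokhlin plus the fiberwise Gibbs inequality).} Since $\tau$ is a finite-entropy generating partition of $\cG$ and $\eta(\cG) = 1$, Rokhlin's entropy formula applies and gives $h_\eta(f) = \int \log J_\eta f\, d\eta$, where $J_\eta f$ is the $\eta$-jacobian of the countable-to-one local homeomorphism $f$. From Section~\ref{weak} we know $\mu_\varphi = h\nu$ with $h$ bounded away from $0$ and $\infty$, $J_\nu f = \lambda e^{-\varphi}$, and $\lambda = e^{P_f(\varphi)}$; therefore $J_{\mu_\varphi} f = \lambda e^{-\varphi}\, h/(h\circ f)$ and, after $\eta$-invariance cancels the logarithmic $h$-terms,
\[
\int \log J_{\mu_\varphi} f \, d\eta \;=\; \log\lambda \,-\, \int \varphi\, d\eta.
\]
Because $\eta$ and $\mu_\varphi$ are both $f$-invariant, the weights $(1/J_\eta f(y))_{f(y)=x}$ and $(1/J_{\mu_\varphi} f(y))_{f(y)=x}$ are probability vectors on each fiber $f^{-1}(x)$, and the classical relative-entropy bound $D(p\Vert q)\ge 0$, integrated against $\eta$, produces
\[
\int \log J_{\mu_\varphi} f \, d\eta \;\ge\; \int \log J_\eta f\, d\eta \;=\; h_\eta(f),
\]
with equality if and only if $J_\eta f = J_{\mu_\varphi} f$ $\eta$-almost everywhere. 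Since $\eta$ is an equilibrium state, the left-hand side equals $P_f(\varphi) - \int\varphi\, d\eta = h_\eta(f)$, so we have equality and the jacobians agree $\eta$-a.e.

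\emph{Step 3 (from jacobians to measures).} Iterating $J_\eta = J_{\mu_\varphi}$ $\eta$-a.e.\ along a Gibbs time $n_k$ of an $\eta$-generic $x\in\cG$, and using the Bowen property~(\ref{H2}) together with the boundedness of $h$ to control the distortion of $J_{\mu_\varphi}^{(n_k)}$ on $B_\varepsilon(x, n_k)$, a change of variables through the homeomorphism $f^{n_k}\colon B_\varepsilon(x, n_k) \to B(f^{n_k}x,\varepsilon)$ gives
\[
\eta(B_\varepsilon(x, n_k)) \;\asymp\; \lambda^{-n_k} e^{S_{n_k}\varphi(x)}\,\eta(B(f^{n_k}x,\varepsilon)),
\]
which upgrades $\eta$ to a weak Gibbs measure for $(f,\varphi)$ with the same constant $P_f(\varphi)$. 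Comparing this with the reference Gibbs estimate for $\nu$ from Lemma~\ref{conforme} at the same Gibbs times rules out mutual singularity $\eta \perp \mu_\varphi$, so in the Lebesgue decomposition $\eta = \eta_{\mathrm{ac}} + \eta_s$ with respect to $\mu_\varphi$ one has $\eta_{\mathrm{ac}} > 0$. Both summands are $f$-invariant, and Proposition~\ref{acim} (uniqueness of the ergodic $\nu$-absolutely continuous invariant measure) together with the ergodicity of $\eta$ then forces $\eta_{\mathrm{ac}} = \mu_\varphi$ and $\eta_s = 0$, hence $\eta = \mu_\varphi$. The main technical obstacle is precisely this last passage: first, the fiberwise identity $\sum_{f(y)=x} 1/J_\mu f(y) = 1$ for any $f$-invariant $\mu$ that underlies Step~2, which must be established via a careful disintegration of $\mu$ along $f$; and second, the uniform control of $\eta(B(f^{n_k}x,\varepsilon))$ along Gibbs times needed in Step~3, for which I would use the density of preorbits together with the positive frequency condition~(\ref{H1}) to keep the target points well inside the support of $\eta$.
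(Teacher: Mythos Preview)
Your route is genuinely different from the paper's and is in the spirit of the Ledrappier--Ma\~n\'e/Oliveira--Viana/Varandas--Viana uniqueness arguments: Rokhlin's formula plus a fiberwise Gibbs inequality to pin down the Jacobian of $\eta$, followed by an absolute-continuity step. The paper instead builds the measurable partition $\xi=\bigvee_{n\ge 0} f^{n}\tau$, disintegrates both $\eta$ and $\mu_\varphi$ over $\xi$, obtains an \emph{explicit} formula $\mu_x(f^{-n}\xi(x))=\mu_\varphi(f^{n}\xi(x))/\big(\mu_\varphi(\xi(x))\,J_\nu f^{n}(x)\big)$ in terms of globally defined objects, and then runs Jensen directly at the level of conditional measures. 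The paper's approach therefore never needs to compare the two Jacobians fiberwise and sidesteps the issue below.

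There is a real gap in your Step~2. The inequality $D(p\Vert q)\ge 0$ requires $q=(1/J_{\mu_\varphi}f(y))_{f(y)=x}$ to be a probability (or sub\-probability) vector at the point $x$ where you apply it, and you then integrate against $\eta$. But $\sum_{f(y)=x} 1/J_{\mu_\varphi}f(y)=1$ is equivalent to $\mathcal{L}_\varphi h(x)=\lambda h(x)$, which in this paper is only established $\nu$-a.e.\ (hence $\mu_\varphi$-a.e.), not $\eta$-a.e. Your flagged obstacle---``establish the identity via disintegration of $\mu$ along $f$''---misdiagnoses the problem: the identity does hold for each invariant $\mu$ on its own full-measure set, but what you need is the $\mu_\varphi$-identity on an $\eta$-full set, and that is exactly the absolute continuity $\eta\ll\nu$ you only reach in Step~3. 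This is a genuine circularity. In references where this Jacobian route succeeds, one first proves that the eigenfunction $h$ is continuous (so $\mathcal{L}_\varphi h=\lambda h$ holds pointwise), or one chooses a version of $h$ with $\mathcal{L}_\varphi h\le \lambda h$ everywhere; neither is available from the present paper's construction of $h$ as a bare $L^1(\nu)$ density.

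In Step~3 your stated worry is misplaced: to get $\eta\ll\nu$ you only need the \emph{upper} bound $\eta(B_\varepsilon(x,n_k))\le C\,\lambda^{-n_k}e^{S_{n_k}\varphi(x)}\le C'\,\nu(B_\varepsilon(x,n_k))$, for which $\eta(B(f^{n_k}x,\varepsilon))\le 1$ suffices. The actual missing ingredient is the covering/differentiation step that turns this ball-wise comparison into $\eta(A)=0$ whenever $\nu(A)=0$; in this generality (no expansion, only the generating partition on $\cG$) that Besicovitch-type argument needs to be spelled out carefully, and your hint via density of preorbits does not obviously do it.
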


\begin{proof} Let $\eta$ be an ergodic equilibrium state for $(f, \varphi)$, that means, 
$$\log\lambda=P_{f}(\varphi)=h_{\eta}(f)+\int{\varphi}\,d\eta.$$

Let $\tau$ be a generating partition of $\cG$ where $f$ is injective on each atom of $\tau$. Notice that $\tau$ is a generating partition with respect to $\eta$ because the condition~(\ref{H3}) on the potential, i.e.,
\begin{equation*}
P_{f}(\varphi, \cG^{c})<P_{f}(\varphi, \cG)=P_{f}(\varphi)
\end{equation*} implies that $\eta(\cG)=1$, for any equilibrium state of $(f, \varphi).$ 

Define the partition $\xi$ by $\xi:=\bigvee_{n\geq 0}f^{n}\tau$. Observe that $f^{-1}\xi$ is finer than $\xi$ and $\bigvee_{n\geq 0}f^{-n}\xi$ is also a generating partition. We will compare $\mu_{\varphi}$ and $\eta$ with respect to this partition $\xi.$

Denote by $(\eta_{x})_{x}$ and by $(\mu_{x})_{x}$ the Rokhlin's disintegration of the measures $\eta$ and $\mu_{\varphi}$ on the partition $\xi$, respectively.  
Recalling that $\mu_{\varphi}=h\nu$, its decomposition $(\mu_x)_{x}$ over $\xi$ is given by
$$\mu_{x}(A)=\frac{1}{\mu_{\varphi}(\xi(x))}\displaystyle\int_{A\cap\xi(x)}{h_{x}(y)}\,d\nu_x(y),$$ for any Borel set $A\subset M.$ Since $f^{-1}\xi$ is finer than $\xi$ we have
\begin{eqnarray*}
\mu_{x}(f^{-n}\xi(x))&=&\frac{1}{\mu_{\varphi}(\xi(x))}\int_{f^{-n}\xi(x)}{h_{x}(f^{-n}(y))}\,d\nu_{x}(f^{-n}(y)),
\end{eqnarray*} for every $n\in \mathbb{N}.$ Using change of coordinates, we compute 
\begin{eqnarray*}
\mu_{x}(f^{-n}\xi(x))&=&\frac{1}{\mu_{\varphi}(\xi(x))}\int_{f^{-n}\xi(x)}{h_{x}(f^{-n}(y))}\,d\nu_{x}(f^{-n}(y))\\\\
&=&\frac{1}{\mu_{\varphi}(\xi(x))}\int_{\xi(x)}{h_{x}(f^{-n}(y))\lambda^{-n}e^{S_{n}{\varphi}(x)}}\,d\nu_{x}(f^{-n}(y))\\\\
&=&\frac{1}{\mu_{\varphi}(\xi(x))}\int_{f^{n}\xi(x)}{\frac{h_{x}(f^{-n}(y))\lambda^{-n}e^{S_{n}{\varphi}(y)}}{\lambda^{n}e^{-S_{n}{\varphi}(x)}}}\,d\nu_{f^{n}(x)}(y)\\\\
&=&\frac{1}{\mu_{\varphi}(\xi(x))}\int_{f^{n}\xi(x)}{\frac{h_{f^{n}(x)}(y)}{\lambda^{n}e^{-S_{n}{\varphi}(x)}}}\,d\nu_{f^{n}(x)}(y)\\\\
&=&\frac{\mu_{\varphi}(f^{n}\xi(x))}{\mu_{\varphi}(\xi(x))J_{\nu}f^{n}(x)}\\
\end{eqnarray*}
on the fourth equality we use  $\mathcal{L}_{\varphi}(h)=\lambda h$ in almost every where. Then 
$$\log\mu_{x}(f^{-n}(\xi)(x))=\log \mu_{\varphi}(f^{n}\xi(x))-\log\mu_{\varphi}(\xi(x))-\log J_{\nu}f^{n}(x).$$ 
By Birkhoff’s Ergodic Theorem, we obtain
\begin{eqnarray*}
-\int{\log\mu_{x}(f^{-n}(\xi)(x))}\,d\eta(x)&=&\int{\log J_{\nu}f^{n}(x)}\,d\eta(x)\\
&=&\int{\log \lambda^{n}e^{-S_{n}\phi(x)}}\,d\eta(x)\\
&=&n\,(\log\lambda-\int{\phi}\,d\eta)\\
&=&n h_{\eta}(f).
\end{eqnarray*}
On the other hand, since $\xi$ is a generating partition, it follows that
$$n h_{\eta}(f)=n h_{\eta}(f,\xi)=H_{\eta}(f^{-n}\xi|\xi)=-\int{\log\eta_{x}(f^{-n}(\xi)(x))}\,d\eta(x).$$
Finally, applying the concavity of the logarithm, we obtain the equality
\begin{eqnarray*}
0=-\int{\log\frac{\mu_{x}(f^{-n}(\xi)(x))}{\eta_{x}(f^{-n}(\xi)(x))}}\,d\eta(x)&=&\int{\log\frac{d\mu_{\phi}}{d\eta}}\,d\eta\\
&\leq&\log\int{\frac{d\mu_{\phi}}{d\eta}}\,d\eta=0\\
\end{eqnarray*}
which implies $\eta=\mu_{\varphi}$ on the $\sigma$-algebra generated by $f^{-n}(\xi)$. As $f^{-n}(\xi)$ goes to the total $\sigma$-algebra when $n\rightarrow\infty$, we prove that $\eta=\mu_{\varphi}$ and this finishes the proof.
\end{proof}


\section{Liftability of the weak Gibbs measure}
\label{atrator}
In this section we study the class of homeomorphisms $F:N \to N$  that are semiconjugated to local homeomorphisms $f:M\to M$ as defined in Section~\ref{results}.  We will prove Theorems ~\ref{theo.Gibbs} and~\ref{theo finitude} by applying the results of Theorems~\ref{Gibbs base} and~\ref{ee base}. Our strategy is to show that the weak Gibbs measure of $f$ is liftable to a weak Gibbs measure for the attractor. In \cite{FO18}, the first author, together with Oliveira, addressed the inverse problem of preserving the Gibbs property under projection. 

\subsection{Cohomologous potential} The next result shows that every H\"older continuous potential which satisfies conditions~(\ref{H4}) and (\ref{H5}) is cohomologous to one which does not depend on the stable direction and preserves the properties~(\ref{H4}) and (\ref{H5}).

\begin{proposition}\label{homologo} Let $\phi: N \to \mathbb{R}$ be a H\"older continuous potential satisfying~(\ref{H4}) and (\ref{H5}). Then there exists a continuous potential $\bar{\phi}:  N \to \mathbb{R}$ cohomologous to $\phi$ which does not depend on the stable direction. Moreover, $\bar{\phi}$ also satisfies the conditions~(\ref{H4}) and (\ref{H5}).
\end{proposition}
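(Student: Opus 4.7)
\bigskip

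\noindent\textbf{Proof plan for Proposition~\ref{homologo}.} The idea is the classical coboundary trick for skew products with contracting fibers: since $F$ contracts $N_x$ at exponential rate $\lambda$ and $\phi$ is H\"older, one can telescope to produce a continuous function $u$ whose coboundary cancels the stable-direction dependence of $\phi$.

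\emph{Step 1 (reference section and definition of $u$).} I would fix a continuous section $s\colon M\to N$ with $\pi\circ s=\mathrm{id}_M$. In the concrete examples of Section~\ref{exemplos} with $N=M\times K$ this is just $s(x)=(x,y_0)$; in the abstract holonomy setting a local section can be built from the $F$-invariant holonomies $h_{x,y}$ and then globalized by a covering argument, which I flag below as the main technical point. Define
\begin{equation*}
u(\hat x)\;=\;\sum_{n=0}^{\infty}\bigl[\phi(F^n(\hat x))-\phi(F^n(s(\pi(\hat x))))\bigr].
\end{equation*}
For each $n$, both $F^n(\hat x)$ and $F^n(s(\pi(\hat x)))$ lie in the same fiber $N_{f^n(\pi(\hat x))}$, so the fiber contraction hypothesis gives $d_N(F^n(\hat x),F^n(s(\pi(\hat x))))\le \lambda^n\,\mathrm{diam}(N)$. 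Since $\phi$ is $\alpha$-H\"older, the $n$-th summand is bounded by $C\,\lambda^{n\alpha}\,\mathrm{diam}(N)^{\alpha}$, so the series converges uniformly and defines a bounded continuous function.

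\emph{Step 2 (the cohomologous potential depends only on the base).} Set $\bar\phi=\phi-u+u\circ F$. A direct telescoping rearrangement, using $u\circ F(\hat x)=\sum_{n\ge 1}[\phi(F^n\hat x)-\phi(F^{n-1}(s(f(\pi\hat x))))]$, yields
\begin{equation*}
\bar\phi(\hat x)\;=\;\phi(s(x))+\sum_{n=0}^{\infty}\bigl[\phi(F^{n+1}(s(x)))-\phi(F^n(s(f(x))))\bigr],\qquad x=\pi(\hat x).
\end{equation*}
Here $F(s(x))$ and $s(f(x))$ both lie in $N_{f(x)}$, so the same H\"older-plus-contraction estimate as in Step~1 shows the series converges and depends only on $x=\pi(\hat x)$. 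Hence there exists a continuous $\tilde\varphi\colon M\to\R$ with $\bar\phi(\hat x)=\tilde\varphi(\pi(\hat x))$, i.e.\ $\bar\phi$ is independent of the stable direction.

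\emph{Step 3 (preservation of (H4) and (H5)).} For~(H4), since $S_n\bar\phi=S_n\phi+u-u\circ F^n$, for any $\hat y,\hat z\in B_\vep(\hat x,n)$ one has
$|S_n\bar\phi(\hat y)-S_n\bar\phi(\hat z)|\le|S_n\phi(\hat y)-S_n\phi(\hat z)|+4\|u\|_\infty\le V+4\|u\|_\infty$,
so $\bar\phi$ is Bowen on $\hat\cG$ with an enlarged constant. For~(H5), Proposition~\ref{pressao cohomologo} applied to $\bar\phi$ and $\phi$ (both on $\hat\cG$, which is positively invariant since $F(\pi^{-1}(\cG))\subset\pi^{-1}(f\cG)\subset\pi^{-1}(\cG)$, and on $\hat\cG^c$, as the proof of that proposition in fact uses only covers by dynamical balls and not the positive invariance) yields $P_F(\bar\phi,\hat\cG)=P_F(\phi,\hat\cG)$ and $P_F(\bar\phi,\hat\cG^c)=P_F(\phi,\hat\cG^c)$. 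Combined with $P_F(\bar\phi)=P_F(\phi)$, the strict inequality $P_F(\phi,\hat\cG^c)<P_F(\phi,\hat\cG)=P_F(\phi)$ passes to $\bar\phi$.

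\emph{Main obstacle.} The only genuinely nontrivial point is the existence of a continuous global section $s$. In the motivating examples it is automatic, but in the abstract setup one must exploit the H\"older bi-Lipschitz holonomies to define $u$ intrinsically, e.g.\ by replacing $s(\pi(\hat x))$ with a holonomy image of a fixed reference point, and then checking that the resulting definition is independent of the choice up to a coboundary. Everything else (the summability estimate, the telescoping identity, and the invariance of the Bowen property and the relative pressure) is a routine verification once $u$ is in hand.
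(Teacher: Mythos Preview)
Your approach is essentially identical to the paper's: the paper also builds $u(\hat x)=\sum_{j\ge 0}[\phi(F^j\hat x)-\phi(F^j(r(\hat x)))]$ for a continuous ``retraction'' $r:N\to N$ constant on each fiber (your $r=s\circ\pi$), proves convergence from the fiber contraction and H\"older continuity of $\phi$, sets $\bar\phi=\phi-u+u\circ F$, telescopes to see that $\bar\phi$ depends only on $\pi(\hat x)$, and then derives~(\ref{H4}) from the bound $|S_n\bar\phi(\hat y)-S_n\bar\phi(\hat z)|\le|S_n\phi(\hat y)-S_n\phi(\hat z)|+2\|u\|$ and~(\ref{H5}) from Proposition~\ref{pressao cohomologo}. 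The only notable difference is that the paper simply \emph{postulates} the existence of the continuous fiber-constant map $r$ without further comment, whereas you rightly flag the construction of a continuous global section as the one nontrivial ingredient; so your caution here is well placed rather than a deficiency. (Minor slip: your telescoping identity should read $S_n\bar\phi=S_n\phi-u+u\circ F^n$, not $+u-u\circ F^n$, but this does not affect the $4\|u\|_\infty$ bound.)
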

 \begin{proof} Let $r:N\rightarrow N$ be a continuous function that not depending on the stable direction:  for each $\hat{x}\in N$ the image $r(\hat{x})$ is a point in the local stable manifold $N_{\hat{x}}$ and $r(\hat{x})=r(\hat{y})$ for all $\hat{y}\in N_{\hat{x}}$. Define $u:N\rightarrow\mathbb{R}$ by $$u(\hat{x})=\sum_{j=0}^{\infty}\phi\circ F^{j}(\hat{x})-\phi\circ F^{j}(r(\hat{x})).$$
Since $\hat{x}$ and $r(\hat{x})$ are in the same stable direction $N_{\hat{x}}$, for all $j>0$ we have   
$$ d_N(F^{j}(\hat{x}), F^{j}( r(\hat{x})) \leq \lambda^j d_{N}(\hat{x}, r(\hat{x})),$$
where $\lambda$ is the contraction rate of $F$ . Since $\phi$ is a H\"older function we obtain
\begin{eqnarray*}
u(\hat{x})=\sum_{j=0}^{\infty}\phi\circ F^{j}(\hat{x})-\phi\circ F^{j}(r(\hat{x}))\leq c\sum_{j=0}^{\infty} d_N(F^{j}(\hat{x}), F^{j}(r(\hat{x})))\leq \tilde{c} \sum_{j=0}^{\infty} \lambda^{ j}.
\end{eqnarray*}
Therefore $u$ is well defined and it is a continuous function.

Now, consider the potential $\bar{\phi}:N\rightarrow\mathbb{R}$\; defined by\; $\bar{\phi}:=\phi-u+u\circ F$. Then $\bar{\phi}$ is a continuous potential cohomologous to $\phi$ and it holds that
\begin{eqnarray*}
\bar{\phi}&=&\phi-u+u\circ F\\ 
&=&\phi-\sum_{j=0}^{\infty}\left(\phi\circ F^{j}-\phi\circ F^{j}\circ r\right)+\sum_{j=0}^{\infty}\left(\phi\circ F^{j+1}-\phi\circ F^{j}\circ r\circ F\right)\\\\
&=&\phi\circ r+\sum_{j=0}^{\infty}\left(\phi\circ F^{j+1}\circ r-\phi\circ F^{j}\circ r\circ F\right).
\end{eqnarray*}
Therefore, $\bar{\phi}$ is constant on the stable direction. Moreover, since 
$$|S_{n}\bar\phi(\hat{y})-S_{n}\bar\phi(\hat{z})|\leq |S_n\phi(\hat{y})-S_n\phi(\hat{z})|+2\|u\|,$$
we conclude that $\bar{\phi}$ satisfies~(\ref{H4}) and, by Proposition~\ref{pressao cohomologo}, $\bar{\phi}$ satisfies~(\ref{H5}).
\end{proof}

According to Proposition~\ref{homologo} for any $\phi:  N \to \mathbb{R}$ H\"older continuous potential for~$F$,  there exists a continuous potential $\bar{\phi}:  N \to \mathbb{R}$ that is cohomologous to $\phi$ and not depending on the stable direction. Hence, the potential $\bar{\phi}$ induces a continuous potential $\varphi:M\rightarrow\mathbb{R}$ given by 
\begin{equation}\label{eq.phis}
 \bar \phi =\varphi\circ\pi,
 \end{equation}
where $\pi: N\rightarrow M$ is the continuous surjection which conjugates $F$ and $f$. 
In the next result, we prove that $\varphi$ satisfies the conditions~(\ref{H2}) and (\ref{H3}). As a consequence, we can apply our Theorem~\ref{Gibbs base} for $(f, \varphi).$
\begin{lemma}\label{le.fizinho}
 If $\phi: N \to \mathbb{R}$ is a H\"older continuous potential satisfying~(\ref{H4}) and~(\ref{H5}) then $\varphi:M\to\mathbb{R}$ is a continuous potential satisfying~(\ref{H2}) and~(\ref{H3}).
 \end{lemma}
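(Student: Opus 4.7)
The plan is to transfer (H4) and (H5) for the cohomologous potential $\bar\phi$ into (H2) and (H3) for $\varphi$, exploiting $\bar\phi=\varphi\circ\pi$ together with the holonomy control and the contracting fiber dynamics of $F$. By Propositions~\ref{homologo} and~\ref{pressao cohomologo}, $\bar\phi$ itself satisfies (H4) and (H5) (with possibly modified constants), so one may work with $\bar\phi$ in place of $\phi$ throughout.

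For (H2), let $(\varepsilon_N,V)$ denote the pair from (H4) applied to $\bar\phi$ and set $\varepsilon_M:=\varepsilon_N/C$, where $C\ge 1$ is the constant appearing in the holonomy estimate. Given $x\in\cG$, a Gibbs time $n=n_j(x)$, and points $y,z\in B_{\varepsilon_M}(x,n)\subset M$, I would pick any lift $\hat x\in\pi^{-1}(x)\subset\hat\cG$ and define $\hat y:=h_{x,y}(\hat x)$ and $\hat z:=h_{x,z}(\hat x)$. Invariance of the holonomies under $F$ yields $F^i(\hat y)=h_{f^i(x),f^i(y)}(F^i(\hat x))$, so the fiber term in the upper bound of the holonomy estimate vanishes and
\[
d_N(F^i(\hat x),F^i(\hat y))\le C\,d_M(f^i(x),f^i(y))<C\varepsilon_M=\varepsilon_N
\]
for $0\le i\le n$, and analogously for $\hat z$. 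Thus $\hat y,\hat z\in B_{\varepsilon_N}(\hat x,n)$, so applying (H4) to $\bar\phi$ at $\hat x$, combined with $\bar\phi\circ F^i=\varphi\circ f^i\circ\pi$ (which makes $S_n\bar\phi(\hat y)=S_n\varphi(y)$), gives $|S_n\varphi(y)-S_n\varphi(z)|\le V$. This is (H2) with constants $\varepsilon_M$ and $V$.

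For (H3), the strategy is to prove the three identities $P_F(\bar\phi)=P_f(\varphi)$, $P_F(\bar\phi,\hat\cG)=P_f(\varphi,\cG)$, and $P_F(\bar\phi,\hat\cG^c)=P_f(\varphi,\cG^c)$; once these are established, (H5) for $\bar\phi$ translates verbatim into (H3). The first identity follows from the variational principle: $\eta\mapsto\pi_*\eta$ is a bijection between $F$-invariant and $f$-invariant probabilities, preserving both $\int\bar\phi\,d\eta=\int\varphi\,d\pi_*\eta$ and entropy, since $F$ contracts the fibers $N_x$ (so the fiber partition has zero conditional entropy and $h_\eta(F)=h_{\pi_*\eta}(f)$). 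For the relative pressures, one uses that $\eta(\hat\cG)=\pi_*\eta(\cG)$ under the same bijection and invokes the variational principle for relative pressure on the positively invariant Borel set $\cG$. Alternatively, one argues directly with dynamical ball covers: $\pi$ is $C$-Lipschitz (which lets one project any $F$-cover to an $f$-cover), while each compact fiber $N_x$ admits a uniformly bounded $\varepsilon$-cover of cardinality $K_\varepsilon$ independent of $n$ (which lets one lift any $f$-cover to an $F$-cover, via holonomies on each fiber seed).

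The main obstacle is a rigorous proof of the equality of the relative pressures. The variational-principle route is clean but presumes the equality $P_T(\varphi,Z)=\sup\{h_\mu+\int\varphi\,d\mu:\mu(Z)=1\}$ on the invariant Borel set $Z=\cG$; the combinatorial route is elementary but demands careful bookkeeping of the multiplicative fiber-multiplicity factor $K_\varepsilon$, the radius dilation factor $C$, and the additive $n\cdot\omega(\varepsilon)$ distortion arising from uniform continuity of $\varphi$. Because the relative pressure is defined via $\lim_{\varepsilon\to 0}$, all these corrections are negligible in the limit and the three identities follow.
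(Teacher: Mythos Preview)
Your argument for (H2) is correct and matches the paper's: both lift points $y,z\in B_{\varepsilon}(x,n)$ into an $F$-dynamical ball around a fixed $\hat x\in\pi^{-1}(x)$ via the holonomies, use $F$-invariance of the holonomies to control the iterates, and then read off the Bowen bound from $S_n\bar\phi(\hat y)=S_n\varphi(y)$.

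For (H3), the variational-principle route you list first is a genuine gap: the identity $P_T(\varphi,Z)=\sup\{h_\mu+\int\varphi\,d\mu:\mu(Z)=1\}$ is \emph{not} available for the Carath\'eodory relative pressure on an arbitrary invariant Borel set $Z$ (only one inequality holds in general), so you cannot transport (H5) to (H3) through measures. You anticipated this, and your alternative combinatorial route is the one the paper takes --- but the paper's execution is simpler than your sketch. Because $\bar\phi=\varphi\circ\pi$ is constant on fibers, the paper proves the sandwich
\[
\pi\bigl(B^F_{C^{-1}\varepsilon}(\hat x,n)\bigr)\subset B^f_{\varepsilon}(x,n)\subset \pi\bigl(B^F_{C\tilde\varepsilon}(\hat x,n)\bigr),
\]
from which $R_{n,C^{-1}\varepsilon}\bar\phi(\hat x)\le R_{n,\varepsilon}\varphi(x)\le R_{n,C\tilde\varepsilon}\bar\phi(\hat x)$ follows directly (no $K_\varepsilon$ fiber-multiplicity, no $n\cdot\omega(\varepsilon)$ distortion term). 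The second inclusion uses the fiber contraction of $F$ exactly as you do; passing to covers, one gets $m_F(\bar\phi,\hat\Lambda,C^{-1}\varepsilon,\gamma)\le m_f(\varphi,\Lambda,\varepsilon,\gamma)\le m_F(\bar\phi,\hat\Lambda,C\tilde\varepsilon,\gamma)$ for any positively invariant $\Lambda$, and letting $\varepsilon\to 0$ yields $P_f(\varphi,\Lambda)=P_F(\bar\phi,\hat\Lambda)$. Applied to $\Lambda=\cG$ and $\Lambda=\cG^c$, this gives (H3) from (H5). So your combinatorial idea is right, but you can drop the fiber-cover counting entirely.
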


 \begin{proof} We start by showing the condition~(\ref{H3}) that is 
$ P_{f}(\varphi, \cG^c)
 < P_f(\varphi, \cG).$
Given $\vep>0$ and $x\in M$, we can consider $\hat{\vep}>0$ and $\hat{x}\in N$ such that $\pi(\hat{x})=x$ and $B(x, \vep)\subset \pi(B(\hat{x}, \hat{\vep}))$ where $\hat{\vep}\to 0$ when $\vep\to 0.$  For $n\in\mathbb{N}$, we denote by $B^F_{\hat{\vep}}(\hat{x},n)$ and $B^f_{\vep}(x,n)$ the dynamical ball of $\hat{x}$ and $x$ respectively, where the superscripts indicate the corresponding dynamics.
Notice that the existence of holonomies $h_{x,y}:  N_x \to N_y$ and a constant $C\geq 1$ satisfying
$$\frac{1}{C}[d_M (x, y) + d_N(h_{x,y}(\hat{x}), \hat{y})]\leq d_N(\hat{x}, \hat{y}) \leq  C [d_M (x, y) + d_N(h_{x,y}(\hat{x}), \hat{y})]$$
implies that
\begin{equation}
\pi\left(B^F_{C^{-1}\vep}(\hat{x}, n)\right)\subset B^f_{\vep}(x, n)\subset \pi\left(B^F_{C\tilde{\vep}}(\hat{x}, n)\right), \label{inclusao}
\end{equation}
where $\tilde{\vep}\to 0$ as $\vep\to 0$. Indeed, from
\begin{equation*}
    d_M(f^j(x), f^j({y}))\leq C d_N(F^j(\hat{x}), F^j{(\hat{y}}))
\end{equation*}
we have $\pi\left(B^F_{C^{-1}\vep}(\hat{x}, n)\right)\subset B^f_{\vep}(x, n).$ Moreover, as $F$ contracts on fibers then
\begin{eqnarray*}
    d_N(F^j(\hat{x}), F^j({\hat{y}}) )&\leq& C [d_M(f^j(x), f^j(y)) + d_N( F^j(h_{x, y} (\hat{x})),   F^j(\hat{y})) ]  \\
    &\leq& C[\vep+\lambda^j d_{N}(h_{x, y}(\hat{x}), \hat{y})]\\
    &\leq&C[\vep+\lambda^j C\hat{\vep}] \\
    &\leq& C\tilde{\vep}. 
\end{eqnarray*}
for every $0\leq j\leq n$, where $\tilde{\vep}\to 0$ as $\vep\to 0$. Thus, $B^f_{\vep}(x, n)\subset \pi\left(B^F_{C\tilde{\vep}}(\hat{x}, n)\right)$. 
From (\ref{inclusao}) it follows that
\begin{eqnarray*}
R_{n, C^{-1}\vep}\bar\phi(\hat{x})&=&   \sup_{\hat{y}\in B^F_{C^{-1}\vep}(\hat{x},n)} S_n \bar\phi(\hat{y}) \\ &=& \sup_{\hat{y}\in B^F_{C^{-1}\vep}(\hat{x},n)}S_n (\varphi\circ\pi)(\hat{y}) \\
&=& \sup_{y\in \pi (B^F_{C^{-1}\vep}(\hat{x},n))}S_n \varphi (y) \\ & \leq & \sup_{y \in B^f_{\vep}(x,n)} S_n \varphi(y) \\
&=&R_{n,\vep} \varphi(x)\\
&\leq& \sup_{y\in \pi (B^F_{C\tilde{\vep}}(\hat{x},n))}S_n \varphi (y) \\
&=& \sup_{\hat{y}\in B^F_{C\tilde{\vep}}(\hat{x},n)}S_n (\varphi\circ\pi)(\hat{y})\\
&=& R_{n, C\tilde{\vep}}\bar\phi(\hat{x}),
\end{eqnarray*}
i.e. 
\begin{eqnarray}
R_{n, C^{-1}\vep}\bar\phi(\hat{x})\leq R_{n,\vep} \varphi(x) \leq R_{n, C\tilde{\vep}}\bar\phi(\hat{x}). \label{R}
\end{eqnarray}
Therefore, for any $\Lambda\subset M$ and $\hat{\Lambda}=\pi^{-1}(\Lambda)\subset N$, positively invariant sets, the inequality~(\ref{R}) yields 
\begin{eqnarray*}
m_F(\bar\phi, \hat{\Lambda}, C^{-1}\vep, n, \gamma)
 &\leq& m_f(\varphi, \Lambda, \vep, n, \gamma)\\
 &\leq& m_F(\bar\phi, \hat{\Lambda}, C\tilde{\vep}, n, \gamma).
\end{eqnarray*}
As $n$ goes to infinity we obtain
\begin{eqnarray*}
m_F(\bar\phi, \hat{\Lambda}, C^{-1}\vep, \gamma)
 \leq m_f(\varphi, \Lambda, \vep, \gamma)
 \leq m_F(\bar\phi, \hat{\Lambda}, C\tilde{\vep},  \gamma).
\end{eqnarray*}
Taking the infimum on $\gamma$ we have 
$$P_{F}(\bar{\phi}, \hat{\Lambda}, C^{-1}\vep)\leq P_{f}(\varphi, \Lambda, \vep) \leq P_{F}(\bar{\phi}, \hat{\Lambda}, C\tilde{\vep}) $$
As $\vep\to 0$ follows that  $\tilde{\vep}\to 0$ and thus 
$$P_{f}(\varphi, \Lambda)=P_{F}(\bar{\phi}, \hat{\Lambda}). $$
Similarly, $$P_{f}(\varphi, \Lambda^c)=P_{F}(\bar{\phi}, \hat{\Lambda}^c).$$
In particular, 
\begin{equation}\label{mesma pressao}
P_{f}(\varphi)=P_{F}(\bar{\phi}).
\end{equation}
Since $\bar\phi=\varphi \circ \pi$ satisfies~(\ref{H5}), by Proposition~\ref{homologo}, we obtain for $\Lambda=\cG$ that
$$P_{f}(\varphi, \cG^c)
= P_{F}(\bar{\phi},\hat{\cG}^{c}) < P_F(\bar{\phi}, \hat{\cG}) = P_f(\varphi, \cG).$$
Then $\varphi$ satisfies condition~(\ref{H3}). 

Now we prove that $\varphi$ satisfies condition~(\ref{H2}). Let $B^f_{\vep}(x, n)$ be a dynamic ball of $x\in M$ where $n$ is a Gibbs time. Given $y, z \in B^f_{\vep}(x, n)$, let $\hat{y}, \hat{z}\in B_{C\tilde{\vep}}(\hat{x}, n)$ be such that $\pi(\hat{y})=y, \pi(\hat{z})=z$. 
Recalling that $\bar\phi=\varphi\circ\pi$ and $\bar\phi=\phi-u+u\circ F$ 
we have
\begin{align*}
|S_{n}\varphi(y)-S_{n}\varphi(z)|=|S_{n}\bar\phi(\hat{y})-S_{n}\bar\phi(\hat{z})|\leq |S_n\phi(\hat{y})-S_n\phi(\hat{z})|+2\|u\|.
\end{align*}
Since $\phi$ satisfies condition~(\ref{H4}) we conclude that $|S_n\varphi(y)-S_n\varphi(z)|\leq \tilde{V}$ for all $y, z\in B^f_{\vep}(x, n)$ and $n$ Gibbs time of $x\in M$. 
\end{proof} 


\subsection{Gibbs measure for the attractor} 

Let $\mu_{\varphi}$ be the ergodic weak Gibbs measure associated to $(f, \varphi)$ given by Theorem~\ref{Gibbs base}. In the next result, we show the uniqueness of a $F$-invariant probability measure $\mu_{\phi}\in \mathbb{P}_{F}(N)$ that projects on $\mu_{\varphi}.$

 \begin{lemma} There exists only one ergodic measure $\mu_{\phi}$ such that $\mu_\varphi=\pi_{*}\mu_\phi.$
\end{lemma}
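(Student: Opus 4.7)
The plan is to prove existence, uniqueness, and ergodicity of the $F$-invariant lift in three steps, exploiting the uniform contraction of $F$ along the fibers $N_x = \pi^{-1}(x)$.

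\textbf{Existence.} First I will produce any initial lift $\mu_0 \in \mathbb{P}(N)$ with $\pi_*\mu_0 = \mu_\varphi$. One choice is $\mu_0 = s_*\mu_\varphi$ where $s: M \to N$ is a measurable section of $\pi$, which exists by a Kuratowski--Ryll-Nardzewski type selection argument since the fibers $N_x$ are compact. Then I take any weak-$*$ accumulation point $\mu_\phi$ of the Ces\`aro averages
$$\mu_n := \frac{1}{n}\sum_{k=0}^{n-1} F_*^k \mu_0.$$
By the standard Krylov--Bogolyubov argument, $\mu_\phi$ is $F$-invariant, and the semiconjugacy $\pi \circ F = f \circ \pi$ combined with $f$-invariance of $\mu_\varphi$ gives $\pi_*(F_*^k\mu_0) = f_*^k\mu_\varphi = \mu_\varphi$ for every $k$, hence $\pi_*\mu_\phi = \mu_\varphi$.

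\textbf{Uniqueness.} Suppose $\mu_1, \mu_2$ are two $F$-invariant probabilities with $\pi_*\mu_i = \mu_\varphi$. Let $D := \sup_{x \in M}\diam(N_x)$, finite by compactness of $N$, and fix a measurable section $s: M \to N$. Given a continuous $\psi: N \to \mathbb{R}$ and $\epsilon > 0$, uniform continuity provides $\delta > 0$ with $|\psi(\hat y) - \psi(\hat z)| < \epsilon$ whenever $d_N(\hat y, \hat z) < \delta$. I choose $n$ with $\lambda^n D < \delta$. For every $\hat x \in N$, both $\hat x$ and $s(\pi(\hat x))$ lie in the same fiber $N_{\pi(\hat x)}$, so iterating the fiber contraction yields
$$d_N\bigl(F^n(\hat x),\, F^n(s(\pi(\hat x)))\bigr) \le \lambda^n d_N(\hat x, s(\pi(\hat x))) \le \lambda^n D < \delta,$$
which gives $\|\psi\circ F^n - (\psi\circ F^n\circ s)\circ \pi\|_\infty < \epsilon$. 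Writing $g_n := \psi\circ F^n\circ s : M \to \mathbb{R}$ (measurable) and using $F$-invariance together with $\pi_*\mu_i = \mu_\varphi$, I obtain
$$\left| \int \psi\,d\mu_i - \int g_n\,d\mu_\varphi \right| = \left| \int \psi\circ F^n\,d\mu_i - \int g_n\circ\pi\,d\mu_i \right| \le \epsilon.$$
Since the approximating integral $\int g_n\,d\mu_\varphi$ does not depend on $i$, this forces $|\int\psi\,d\mu_1 - \int\psi\,d\mu_2| \le 2\epsilon$. Letting $\epsilon \to 0$ and varying $\psi$ gives $\mu_1 = \mu_2$.

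\textbf{Ergodicity.} If $\mu_\phi = \alpha\nu_1 + (1-\alpha)\nu_2$ with $F$-invariant $\nu_1, \nu_2$ and $\alpha \in (0,1)$, then $\mu_\varphi = \alpha\,\pi_*\nu_1 + (1-\alpha)\,\pi_*\nu_2$; ergodicity of $\mu_\varphi$ (extremality in $\mathbb{P}_f(M)$) forces $\pi_*\nu_1 = \pi_*\nu_2 = \mu_\varphi$, and the uniqueness just established then forces $\nu_1 = \nu_2 = \mu_\phi$, so $\mu_\phi$ is ergodic. The main obstacle is the uniqueness step: it relies crucially on the contraction rate $\lambda$ being uniform over the whole base and on the fiber diameters being uniformly bounded, which together make the approximation $\psi \circ F^n \approx (\psi \circ F^n \circ s) \circ \pi$ valid in the supremum norm rather than only in a fiberwise sense. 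Measurability (rather than continuity) of $s$ is enough, since $g_n$ only has to be $\mu_\varphi$-integrable.
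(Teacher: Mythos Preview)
Your proof is correct and takes a genuinely different route from the paper. The paper refers the existence to a classical argument and then proves uniqueness among \emph{ergodic} lifts via basins of attraction: using the fiber contraction, each basin $B_{\mu_i}(F)$ is a union of whole fibers over some Borel set $A_i\subset M$; if $\mu_1\neq\mu_2$ these sets are disjoint, yet ergodicity of $\mu_\varphi$ forces $\mu_\varphi(A_1)=\mu_\varphi(A_2)=1$, a contradiction. Your argument instead proves uniqueness among \emph{all} $F$-invariant lifts by a direct integral estimate: the fiber contraction makes $\psi\circ F^n$ uniformly close to a $\pi$-measurable function, so any two invariant lifts integrate every continuous $\psi$ to the same value. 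This is cleaner and yields a stronger statement (unique invariant lift, not merely unique ergodic lift), from which you then deduce ergodicity by extremality; the paper's approach, by contrast, encodes more of the dynamical picture (basins saturated by stable fibers) but requires handling measurability and invariance of the projected basins. Both rely on exactly the same structural hypothesis---uniform $\lambda$-contraction on fibers over a compact space---and neither uses the holonomies.
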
 
\begin{proof} The existence of $\mu_{\phi}\in \mathbb{P}_{F}(N)$ satisfying $\mu_\varphi=\pi_{*}\mu_\phi$ is a classical result whose proof can be found in \cite[Lema 5.3.]{ARS18}.
We are going to show the uniqueness of $\mu_{\phi}$. Let ${\mu}_1, {\mu}_2\in \mathbb{P}_{F}(N)$ be ergodic measures such that $$\pi_{*}{\mu}_1=\mu_\varphi=\pi_{\ast}{\mu}_2.$$ Consider
 $B_{{\mu}_1}(F)$ and $B_{{\mu}_2}(F)$ the basins of attraction of ${\mu}_1$ and ${\mu}_2$, respectively. From the uniform contraction of $F$ on fibers and the ergodicity of ${\mu}_1$ and ${\mu}_2$, there are Borel sets $A_1,A_2\subset M$ with $A_{1}\cap A_{2}=\emptyset$ such that
$$B_{{\mu}_1}(F)=\bigcup_{y\in A_{1}}N_y\quad\text{and}\quad B_{{\mu}_2}(F)=\bigcup_{z\in A_{2}}N_z .$$  
Moreover, since $\mu_\varphi$ is ergodic and the sets $\pi(B_{{\mu}_1}(F))$, $\pi(B_{{\mu}_2}(F))$ are invariant by $f$, we have that 
$$\mu_{\varphi}\big(\pi(B_{{\mu}_1}(F))\big)=\mu_{\varphi}\big(\pi(B_{{\mu}_2}(F)\big)\big)=1.$$
Hence 
 $$\pi\big(B_{{\mu}_1}(F)\big)\cap \pi\big(B_{{\mu}_2}(F)\big)\neq\emptyset$$
and by definition of $\pi$ it follows that $A_{1}\cap A_{2}\neq\emptyset$ and thus, ${\mu}_1={\mu}_2$.

\end{proof}

In what follows, we establish that the Gibbs property is preserved under projection and liftability.
\begin{theorem} 
$\mu_{\varphi}$ is a weak Gibbs measure for $(f,\varphi)$ if, and only if, $\mu_{\phi}$ given by $\pi_*\mu_{\phi}=\mu_{\varphi}$ is a weak Gibbs measure for $(F,\phi)$. 
\end{theorem}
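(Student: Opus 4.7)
My plan is to bridge the two weak Gibbs conditions using three ingredients already established in the paper. First, the holonomy inequality forces $\pi$ to be $C$-Lipschitz, hence $B^F_\vep(\hat x,n)\subset\pi^{-1}(B^f_{C\vep}(x,n))$ for every $\hat x\in N$ with $x=\pi(\hat x)$ and every $n\in\mathbb{N}$. In the reverse direction, combining the holonomy inequality with the $F$-invariance of the holonomies and the uniform fiber contraction by $\lambda$ (an argument of the same flavor as the one leading to~(\ref{inclusao})) yields
\[
\{\hat y\in N:d_N(\hat x,\hat y)<\vep/(2C^2)\}\cap\pi^{-1}(B^f_{\vep/(2C)}(x,n))\subset B^F_\vep(\hat x,n).
\]
Second, by Proposition~\ref{homologo} the potential $\bar\phi=\varphi\circ\pi$ is cohomologous to $\phi$ via a bounded continuous $u$, so $|S_n\phi(\hat y)-S_n\varphi(\pi(\hat y))|\le 2\|u\|$ for every $\hat y\in N$ and $n\ge 1$. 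Third, equation~(\ref{mesma pressao}) in the proof of Lemma~\ref{le.fizinho} together with Proposition~\ref{pressao cohomologo} gives $P:=P_f(\varphi)=P_F(\bar\phi)=P_F(\phi)$, so the normalizing constant $nP$ is the same in both Gibbs statements.

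Direction $(\Rightarrow)$: assume $\mu_\varphi$ is weak Gibbs and fix $\hat x$ with $x=\pi(\hat x)\in\cG$ and $n$ a Gibbs time for $x$. The upper estimate follows at once from the right inclusion, the identity $\pi_*\mu_\phi=\mu_\varphi$, the upper Gibbs bound $\mu_\varphi(B^f_{C\vep}(x,n))\le K\exp(S_n\varphi(y)-nP)$ for any $y\in B^f_{C\vep}(x,n)$, and the cohomology bound which lets us replace $S_n\varphi(y)$ by $S_n\phi(\hat y)$ up to the factor $e^{2\|u\|}$. For the lower estimate, the left inclusion combined with the Rokhlin disintegration $\mu_\phi=\int\mu_\phi^y\,d\mu_\varphi(y)$ along $\pi$ gives
\[
\mu_\phi(B^F_\vep(\hat x,n))\ge\int_{B^f_{\vep/(2C)}(x,n)}\mu_\phi^y\big(\{\hat z:d_N(\hat x,\hat z)<\vep/(2C^2)\}\cap N_y\big)\,d\mu_\varphi(y),
\]
and the lower Gibbs bound for $\mu_\varphi$ together with the cohomology bound converts this into the required Gibbs lower bound for $\mu_\phi$ at $\hat x$.

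Direction $(\Leftarrow)$: assuming $\mu_\phi$ is weak Gibbs, one covers the fiber $N_x$ by finitely many $\vep$-balls centered at points $\hat x_1,\dots,\hat x_m\in N_x$, so that $\pi^{-1}(B^f_\vep(x,n))$ is covered by the $F$-dynamical balls $B^F_{\vep'}(\hat x_i,n)$ for comparable radii $\vep'$. Applying the weak Gibbs property of $\mu_\phi$ to each element and converting the Birkhoff sums of $\phi$ into Birkhoff sums of $\varphi$ via the cohomology bound yields the weak Gibbs property of $\mu_\varphi=\pi_*\mu_\phi$ at a Gibbs time.

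The main obstacle is producing a uniform lower bound $\mu_\phi^y\big(\{d_N(\hat x,\cdot)<\vep/(2C^2)\}\cap N_y\big)\ge c(\vep)>0$ for $\mu_\varphi$-a.e.\ $y$ close to $x$ in the $(\Rightarrow)$ lower estimate. I would obtain this from the $F$-invariance of $\mu_\phi$ together with the $F$-invariance of the holonomies: the compatibility $F\circ h_{x,y}=h_{f(x),f(y)}\circ F$ allows the fiber conditional $\mu_\phi^y$ to be compared with $\mu_\phi^x$ through the push-forward by $h_{x,y}$, and pulling back by a fixed iterate $F^{-N_0}$ controls the resulting distortion through the Bowen variation of $\phi$ on $\hat\cG$, producing a constant depending only on $\vep$ (and not on $y$ or $n$). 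This is the step where the delicate combination of holonomy invariance, fiber contraction, and the Bowen property~(\ref{H4}) plays the decisive role.
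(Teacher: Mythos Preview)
Your approach has a genuine gap in the $(\Rightarrow)$ lower estimate, precisely at the point you flag as ``the main obstacle.'' The proposed uniform lower bound $\mu_\phi^y\big(\{d_N(\hat x,\cdot)<\vep/(2C^2)\}\cap N_y\big)\ge c(\vep)>0$ on the fiber conditionals is not available: nothing in the hypotheses forces the Rokhlin conditionals of $\mu_\phi$ along $\pi$ to be uniformly non-degenerate on metric balls in the fibers. Your suggestion to compare $\mu_\phi^y$ with $\mu_\phi^x$ via $(h_{x,y})_*$ presupposes that the disintegration is holonomy-invariant, i.e.\ $(h_{x,y})_*\mu_\phi^x=\mu_\phi^y$, which is a strong structural property (essentially a local product structure of $\mu_\phi$) that has not been established and does not follow from the $F$-invariance of the holonomies alone. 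The Bowen bound~(\ref{H4}) controls Birkhoff sums, not conditional measures, so it does not rescue this step.

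The paper sidesteps the disintegration entirely by strengthening your reverse inclusion. Instead of $\{\hat y:d_N(\hat x,\hat y)<r\}\cap\pi^{-1}(B^f_{r'}(x,n))\subset B^F_\vep(\hat x,n)$, the fiber contraction gives, for $n$ large, the full containment
\[
\pi^{-1}\big(B^f_\vep(x,n)\big)\subset B^F_{C\tilde\vep}(\hat x,n),
\]
because after finitely many iterates the $\lambda^j$-shrinking of fibers makes $d_N(F^j(h_{x,y}(\hat x)),F^j(\hat y))$ small regardless of the initial fiber distance. This is equation~(\ref{boladinamica}) and the estimate following it. Combined with your first inclusion (equivalently~(\ref{inclusao})) one gets the two-sided sandwich
\[
\mu_\phi\big(B^F_{C^{-1}\vep}(\hat x,n)\big)\le \mu_\phi\big(\pi^{-1}(B^f_\vep(x,n))\big)=\mu_\varphi\big(B^f_\vep(x,n)\big)\le \mu_\phi\big(B^F_{C\tilde\vep}(\hat x,n)\big),
\]
and the cohomology bound $|S_n\phi-S_n\varphi\circ\pi|\le 2\|u\|$ together with $P_f(\varphi)=P_F(\phi)$ finishes both directions at once, with radii varying but $K(\vep)$ absorbing the change. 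No fiber-measure analysis is needed.
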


\begin{proof}
	Given $\hat{x}, \hat{y} \in N$ consider $x = \pi(\hat{x}), y=\pi(\hat{y})$ in $M$ and
	recall the inclusion~(\ref{inclusao}) on the proof of Lemma \ref{le.fizinho},  that is,
\begin{equation} \label{inclusion}
\pi\left(B^F_{C^{-1}\vep}(\hat{x}, n)\right)\subset B^f_{\vep}(x, n)\subset \pi\left(B^F_{C\tilde{\vep}}(\hat{x}, n)\right),
\end{equation}
where $B^F_\vep(\hat{x}, n)$, $B^f_{\vep}(x,n)$ are the dynamic balls of $F$ and $f$, respectively. Moreover, $\tilde{\vep}=\vep+C\hat{\vep}$ and $\hat{\vep}\to 0$ as $\vep\to 0$. 

We recall also that, for $n$ sufficiently large, we have 
\begin{equation}\label{boladinamica}
\pi^{-1}(B_{\vep}^{f}(x, n))= B_{C\tilde{\vep}}^{F}(\hat{x}, n)\cap \pi^{-1}(B_{\vep}^{f}(x, n))
\end{equation}


because
\begin{eqnarray*}
    d_N(F^j(\hat{x}), F^j({\hat{y}}) )&\leq& C [d_M(f^j(x), f^j(y)) + d_N( F^j(h_{x, y} (\hat{x})),   F^j(\hat{y})) ]  \\
    &\leq& C[\vep+\lambda^j d_{N}(h_{x, y}(\hat{x}), \hat{y})]
\end{eqnarray*}
Therefore,  from~(\ref{inclusion}) and~(\ref{boladinamica}), it follows that  
	\begin{eqnarray*}
\displaystyle	\frac{\mu_{\phi}\left(B_{C^{-1}\vep}^F(\hat{x},n)\right)}{\exp(S_{n}\phi(\hat{y})-n\,P_{F}(\phi))}
		&\le& \frac{\mu_{\phi}\left(\pi^{-1}(B_{\vep}^f(x,n))\right)}{\exp(S_{n} \bar{\phi}(\hat{y})+u(\hat{y})-u\circ F^n(\hat{y})-n\,P_{F}(\phi))}\\  
		&=&\frac{\mu_{\varphi}(B_{\vep}^f(x,n))}{\exp(S_{n}\varphi(y)+u(\hat{y})-u\circ F^n(\hat{y})-n\,P_{f}(\varphi))}\\
        &=&\frac{\mu_{\phi}\left(\pi^{-1}(B_{\vep}^f(x,n))\right)}{\exp(S_{n}\phi(\hat{y})-n\,P_{F}(\phi))} \\
                &\leq& \frac{\mu_{\phi}\left(B_{C\tilde{\vep}}^F(\hat{x},n)\right)}{\exp(S_{n}\phi(\hat{y})-n\,P_{F}(\phi))}.\\
	\end{eqnarray*}	

Since $u$ is a bounded function and  $\vep>0$ is arbitrary, it follows that $\mu_{\varphi}$ is a weak Gibbs measure for $(f,\varphi)$ if, and only if, $\mu_{\phi}$ also is for $(F,\phi)$.
\end{proof}

The uniqueness of the weak Gibbs measure $\mu_{\varphi}$ for the base dynamic $(f, \varphi)$
was obtained in Theorem~\ref{Gibbs base}; Thus, by applying the last result, we obtain the uniqueness of the weak Gibbs measure $\mu_{\phi}$ for $(F, \phi)$. This proves Theorem~\ref{theo.Gibbs}.

\subsection{Equilibrium states for the attractor.} 

Now we can prove Theorem~\ref{theo finitude}. The strategy is to show the existence of a bijection between equilibrium states of $(F, \phi)$ and $(f, \varphi).$ Therefore, Theorem~\ref{theo finitude} will follow from the uniqueness given by Theorem~\ref{ee base}. For the bijection, we will use the Ledrappier and Walter Formula which relates the topological entropy of semiconjugated systems; see \cite{LW} for a proof of this result. 



\begin{theorem}[Ledrappier-Walter's Formula] Let $N$, $M$ be compact metric spaces and let $F:N\rightarrow N$, $f:M\rightarrow M$, $\pi:N\rightarrow M$ be continuous maps such that $\pi$ is surjective and $\pi\circ F=f\circ\pi$ then
$$\sup_{\tilde\mu;\pi_{*}\tilde{\mu}=\mu}h_{\tilde{\mu}}(F)=h_{\mu}(f)+\int{h_{top}(F, \pi^{-1}(y))\,d\mu(y)}$$
\end{theorem}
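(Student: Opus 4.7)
The plan is to prove the formula by establishing both inequalities separately, since the identity is an equality of a supremum with a concrete quantity.

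For the upper bound, I would fix any $F$-invariant probability $\tilde\mu$ on $N$ with $\pi_{*}\tilde\mu=\mu$ and aim to show
$$h_{\tilde\mu}(F)\ \le\ h_{\mu}(f)+\int h_{top}(F,\pi^{-1}(y))\,d\mu(y).$$
The natural starting point is an Abramov--Rokhlin type decomposition $h_{\tilde\mu}(F)=h_{\mu}(f)+h_{\tilde\mu}(F\mid \pi^{-1}\mathcal{B}_M)$, where $\mathcal{B}_M$ is the Borel $\sigma$-algebra of $M$ pulled back to $N$. It then remains to bound the conditional entropy by the integral of fiber topological entropy. To do this, I would pick a fine partition $\xi$ of $N$ with $\tilde\mu$-null boundary together with a partition $\eta$ of $M$, and compare the joint refinement $\xi^n\vee \pi^{-1}\eta^n$ to $\pi^{-1}\eta^n$: the combinatorial excess is controlled by the maximal number of atoms of $\xi^n$ that fit into a single preimage $\pi^{-1}B_{\varepsilon}(y,n)$ of a dynamical ball in $M$. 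Letting $|\eta|\to 0$, a Fatou--Birkhoff ergodic argument converts the $\limsup$ in $n$ into the $\mu$-integral of $h_{top}(F,\pi^{-1}(y))$.

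For the lower bound, I would construct, given $\varepsilon>0$, an $F$-invariant lifting $\tilde\mu$ of $\mu$ with
$$h_{\tilde\mu}(F)\ \ge\ h_{\mu}(f)+\int h_{top}(F,\pi^{-1}(y))\,d\mu(y)-\varepsilon.$$
The approach combines a measurable selection with a long-orbit averaging scheme: for each $y$ in a set of full $\mu$-measure, one selects (measurably in $y$) a finite $(n,\delta)$-separated set $E_n(y)\subset \pi^{-1}(y)$ realizing a multiplicative fraction close to $\exp(n\,h_{top}(F,\pi^{-1}(y)))$; forms empirical measures supported on orbit pieces issued from these sets and integrated against $\mu$; and extracts a weak-$*$ Ces\`aro limit. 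Krylov--Bogolyubov then yields an $F$-invariant measure on $N$ that projects to $\mu$, and a relativized variational principle applied to its Rokhlin disintegration along the fibers $\pi^{-1}(y)$ recovers the desired entropy lower bound.

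The principal obstacle is the lower bound. One must simultaneously guarantee that (i) the selection $y\mapsto$ approximate fiber maximizer is jointly measurable, (ii) the function $y\mapsto h_{top}(F,\pi^{-1}(y))$ is measurable and upper semicontinuous enough to apply Fatou-type bounds and to interchange supremum with integration, and (iii) the averaging procedure produces a measure whose fiber disintegration actually attains the selected fiber entropy, rather than degenerating onto low-entropy orbits. The standard resolution uses a Rokhlin disintegration together with a relativized Kolmogorov--Sinai theorem and a Misiurewicz-style partitioning on fibers; it is the measurability and compatibility-with-invariance issues that make this argument substantially more delicate than the absolute variational principle recalled in Section~\ref{toppressure}.
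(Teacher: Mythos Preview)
The paper does not prove this theorem: it is stated as a black box with the remark ``see \cite{LW} for a proof of this result'' and then applied in the proof of Lemma~\ref{bijection}. So there is no proof in the paper to compare your proposal against.

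Your sketch is a reasonable outline of the original Ledrappier--Walters argument: an Abramov--Rokhlin decomposition gives the upper bound, and a careful construction of a lift via fiberwise separated sets and averaging gives the lower bound. You have correctly identified the main technical issues (measurability of the fiber entropy function, measurable selection, and ensuring the limit measure retains the fiber entropy). If you want to actually write out a proof, the original paper \cite{LW} is short and self-contained; but for the purposes of this paper no proof is needed, since the result is simply quoted.
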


Once again, as in Proposition~\ref{homologo}, given a  H\"older continuous potential $\phi: N\to \mathbb{R}$ satisfying~(\ref{H4}) and~(\ref{H5}), let  $\bar\phi: N\to \mathbb{R}$ be the cohomologous potential and let $\varphi: M\to \mathbb{R}$ be a continuous potential for $f$ satisfying~(\ref{H2}) and~(\ref{H3}) induced by $\bar\phi$. 

\begin{lemma}~\label{bijection} $\mu_\varphi$ is an equilibrium state for  $(f, \varphi)$ if, and only if, $\mu_\phi$ given by $\pi_*\mu_{\phi}=\mu_{\varphi}$ is an equilibrium state for $(F,  {\phi}).$
\end{lemma}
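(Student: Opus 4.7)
The plan is to assemble three ingredients that are essentially independent: (i) the metric entropy of any $F$-invariant lift equals the entropy of its projection, (ii) the integrals of $\phi$ and $\varphi$ against corresponding measures coincide via the cohomology, and (iii) the topological pressures of $(F,\phi)$ and $(f,\varphi)$ are equal. Once these are in place, the equivalence of being an equilibrium state follows immediately from the variational principle.

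First I would show the entropy identity. Since $F$ restricted to each fiber $N_x = \pi^{-1}(x)$ is a uniform contraction by $\lambda<1$, every continuous map on $N_x$ of this form has zero topological entropy, so $h_{\mathrm{top}}(F,\pi^{-1}(y))=0$ for every $y\in M$. Ledrappier-Walter's formula then gives, for every $f$-invariant $\mu$,
\[
\sup_{\tilde\mu:\,\pi_*\tilde\mu=\mu} h_{\tilde\mu}(F) \;=\; h_{\mu}(f).
\]
On the other hand, $\pi$ being a topological factor map yields $h_{\tilde\mu}(F)\ge h_{\pi_*\tilde\mu}(f)$ for every $F$-invariant $\tilde\mu$. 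Combining these two estimates forces $h_{\tilde\mu}(F)=h_{\pi_*\tilde\mu}(f)$ for any $F$-invariant probability $\tilde\mu$.

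Next I would handle the integrals. By Proposition~\ref{homologo} we have $\phi = \bar\phi - u\circ F + u$ with $\bar\phi=\varphi\circ\pi$, and for any $F$-invariant $\tilde\mu$ the coboundary term integrates to zero, so
\[
\int \phi\, d\tilde\mu \;=\; \int \bar\phi\, d\tilde\mu \;=\; \int \varphi\circ\pi\, d\tilde\mu \;=\; \int \varphi\, d(\pi_*\tilde\mu).
\]
Together with the entropy identity, this shows
\[
h_{\tilde\mu}(F) + \int \phi\, d\tilde\mu \;=\; h_{\pi_*\tilde\mu}(f) + \int \varphi\, d(\pi_*\tilde\mu)
\]
for every $F$-invariant $\tilde\mu$. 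Since the pressures agree by identity~(\ref{mesma pressao}), namely $P_F(\phi)=P_F(\bar\phi)=P_f(\varphi)$, the supremum in the variational principle for $(F,\phi)$ is attained at $\tilde\mu$ if and only if the supremum for $(f,\varphi)$ is attained at $\pi_*\tilde\mu$.

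To finish the ``if and only if'' I need a bijection between equilibrium states. The $\Rightarrow$ direction is immediate: $\pi_*$ sends an equilibrium state of $(F,\phi)$ to one of $(f,\varphi)$. For the $\Leftarrow$ direction I would take $\mu_\varphi$, lift it to an $F$-invariant measure $\mu_\phi$ (the existence of a lift is classical, as cited for the previous lemma), and then apply the computation above to conclude that $\mu_\phi$ is an equilibrium state for $(F,\phi)$. The mildly delicate step is ensuring that the lift is unique, which we already established for ergodic measures projecting to $\mu_\varphi$; together with Theorem~\ref{ee base} (uniqueness of $\mu_\varphi$ among ergodic equilibrium states) and ergodic decomposition, this yields the desired bijection. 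The only potential obstacle is ensuring that the lifting argument works at the level of general invariant, rather than only ergodic, measures; ergodic decomposition, combined with the fact that each ergodic component of an equilibrium state is itself an equilibrium state (by affinity of $\eta\mapsto h_\eta(f)+\int\varphi\,d\eta$ on $\mathbb{P}_f(M)$), reduces the general case to the ergodic one.
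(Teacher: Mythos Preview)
Your argument is correct and follows essentially the same route as the paper: zero fiber entropy plus Ledrappier--Walter's formula to identify entropies of a measure and its lift, the cohomology $\phi=\bar\phi-u\circ F+u$ with $\bar\phi=\varphi\circ\pi$ to identify the integrals, and the pressure equality~(\ref{mesma pressao}) to finish. Your explicit use of factor monotonicity ($h_{\tilde\mu}(F)\ge h_{\pi_*\tilde\mu}(f)$) to force $h_{\tilde\mu}(F)=h_{\pi_*\tilde\mu}(f)$ for \emph{every} lift is slightly more careful than the paper's presentation, and your final paragraph on uniqueness of lifts, while correct, is not needed for the lemma as stated.
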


\begin{proof} The uniform contraction of $F$ in $\pi^{-1}({x})$ gives for every $x\in M$  that
$$h_{top}(F, \pi^{-1}({x}))=0.$$ 
Recalling that $P_{f}(\varphi)=P_{F}(\bar\phi)$ by (\ref{mesma pressao}) of Lemma~\ref{le.fizinho}, we apply Ledrappier-Walter's Formula to conclude that
\begin{eqnarray*}
P_{f}(\varphi)&=&P_{F}(\bar\phi)\\
&=&\sup_{\tilde{\eta}}\left\{h_{\tilde{\eta}}(F)+\int{\bar{\phi}} \,d\tilde{\eta}\right\}\\
&=&\!\!\!\!\sup_{\eta;\pi_{\ast}\tilde{\eta}=\eta}\left\{h_{\eta}(f)+\int{\!h_{top}(F, \pi^{-1}(x))\,d\eta(x)}+\int{\varphi}\, d\eta\right\}\\\\
&\leq&P_{f}(\varphi).
\end{eqnarray*}
Thus we must have 
$$h_{ {\mu}}(f)+\int{\varphi} d \mu= h_{{\tilde\mu}}(F)+\int{\bar\phi}d\tilde\mu.$$
for any $\mu\in\mathbb P_f(M)$ and $\tilde\mu\in\mathbb P_F(N)$ such that $\pi_*\tilde\mu=\mu$. 
In partiular, this shows that $\mu_{\varphi}$  is an equilibrium state for $(f, \varphi)$ if and only if $\mu_{\bar\phi}$ given by $\pi_*\mu_{\bar\phi}=\mu_{\phi}$ is  an equilibrium state for $(F, {\bar\phi})$. Since $(F, {\phi})$ and $(F, \bar{\phi})$ have the same equilibrium state, we conclude the proof of the lemma.
\end{proof}

The uniqueness of the equilibrium states for the base dynamics $(f, \varphi)$
was obtained in Theorem~\ref{ee base}. Combining this fact with Lemma~\ref{bijection} we conclude
that there exists only one ergodic equilibrium state for $(F, \phi)$. Since the ergodic Gibbs measure $\mu_{\phi}$ of Theorem~\ref{theo.Gibbs} is an equilibrium state for $(F, \phi)$, we complete the proof of Theorem~\ref{theo finitude}.

\section{Large Deviation Results}\label{LD} 
In this section, we prove Theorem~\ref{th ld1} and Theorem~\ref{th ld2}, which provide large deviation bounds for the equilibrium states $\mu_{\varphi}$ and $\mu_{\phi}$ of $(f, \varphi)$ and $(F, \phi)$ respectively.  Our approach follows the methods developed in \cite{CFV19, Var12, VZ0, You90} to derive large deviation estimates for weak Gibbs measures.

\subsection{Upper bound}
Let $\psi\in C^0(M)$ be a continuous observable and $c\in \mathbb{R}$. For $\beta>0$, $0<\vep<\delta_0/2$ arbitrarily small (where $\delta_0$ is given by~(\ref{estrela})) and $n \ge 1$ fixed, we will estimate the $\mu_{\varphi}$-measure of the set 
$$B_n=\left\{x \in M: \frac{1}{n}\sum_{j=0}^{n-1}\psi(f^j(x))\geq c\right\}.$$
Since $\cG$ is a $\mu_{\varphi}$-full measure set and $\mu_{\varphi}$ satisfies Corollary~\ref{gibbs_n}, we write
\begin{eqnarray*}\label{eq:decompB}
B_n\cap  \cG\!\!\!  &\subset&\!\!\! \{x\in \cG: Ke^{\alpha(n_{i+1}-n_i(x))}>e^{\beta n}\}
\cup\{x\in\cG: Ke^{\alpha(n_{i+1}-n_i(x))}\le e^{\beta n}\}\nonumber\\
&\subset& \{x\in \cG: n_{i+1}-n_i(x)>\frac{\beta n}{2\alpha}\}  \cup \{x\in \cG: Ke^{\alpha(n_{i+1}-n_i(x))}\le e^{\beta n}\},
\end{eqnarray*} 
where $n_i, n_{i+1}$ are consecutive Gibbs times of $x\in\mathcal{G}$ such that $n_{i}\leq n\leq n_{i+1}$. Notice that, in particular, if $E_n\subset \{x\in B_n \cap \cG: Ke^{\alpha(n_{i+1}-n_i(x))}\le e^{\beta n}\}$ is a maximal $(n,\vep)$-separated set then $\{x\in B_n \cap \cG:Ke^{\alpha(n_{i+1}-n_i(x))}\le e^{\beta n}\}$ is
contained in the union of the dynamical balls $B_{2\varepsilon}(x, n)$
centered at points of $E_n$. Therefore, applying Corollary~\ref{gibbs_n} to each $B_{2\varepsilon}(x, n)$ of $E_n$ we obtain
\begin{eqnarray}\label{eq.upper}
    \mu_{\varphi}(B_n)\!\!\! & <& \!\!\! 
        \mu_{\varphi}(\{x:n_{i+1}-n_i(x)>\frac{\beta n}{2\alpha}\})+\mu_{\varphi}(\{x : Ke^{\alpha(n_{i+1}-n_i(x))}\le e^{\beta n}\})\nonumber \\ 
        &<& \mu_{\varphi}(\{x:  n_{i+1}-n_i(x)>\frac{\beta n}{2\alpha}\})+ \sum_{y \in E_n}\mu_{\varphi}(B_{2\varepsilon}(y, n)) \nonumber\\
       &<&  \mu_{\varphi}(\{x:  n_{i+1}-n_i(x)>\frac{\beta n}{2\alpha}\}) + e^{\beta n} \sum_{x \in E_n} e^{S_n\varphi(x)-nP_{f}(\varphi)}
\end{eqnarray}
for every $n$. In the following, we estimate the second term of the last sum. Consider $\si_n$ and
$\eta_n$ the probability measures defined by
$$
\si_n
   =\frac{1}{Z_n} \sum_{x \in E_n} e^{S_n\varphi(x)-nP_{\varphi}(f)} \de_x
   \quad\text{and}\quad
\eta_n
    = \frac{1}{n} \sum_{j=0}^{n-1} f^j_* \si_n,
$$
where $Z_n=\sum_{x \in E_n} e^{S_n\varphi(x)-nP_{\varphi}(f)}.$ Let $\eta\in\mathcal{M}(f)$ be a $f$-invariant measure which is a weak* accumulation point of the sequence $(\eta_n)_{n}$. Assuming that $\cQ$ is a partition of $M$ with diameter smaller
than $\vep$ with $\eta(\partial\cQ)=0$, then each element of $\cQ^{(n)}$ contains a unique point of $E_n$. Proceeding as in the proof of the variational principle (cf. \cite[pp. 219--221]{Wal00}), we compute the entropy of this partition 
$$H_{\si_n}(\cQ^{(n)}) + \int S_n\varphi(x)  \,d\si_n - nP_{\varphi}(f)
    = \log \Big( \sum_{x\in E_n} e^{S_n\varphi(x)-nP_{\varphi}(f)} \Big)
$$
and we obtain that
\begin{equation}\label{eq.upper2}
\limsup_{n \to \infty}
    \frac{1}{n} \log \Big( \sum_{x\in E_n} e^{S_n\varphi(x)-nP_{\varphi}(f)} \Big)
    \leq h_\eta(f) +\int \varphi(x)\, d\eta - P_{\varphi}(f) + \beta.
\end{equation}
Since $\frac {1}{n} \sum_{j=0}^{n-1} \psi (f^j(x)) \ge c$ for every $x\in E_n$, we have that $\int \psi \, d\eta_n \ge c$ for every $n\ge 1$. Thus, by weak$^*$ convergence, it follows that $\int \psi \, d\eta \ge c$ .
Hence, equations~(\ref{eq.upper}) and~\eqref{eq.upper2} together imply 
\begin{align}
\limsup_{n \to +\infty} \frac{1}{n} \log \mu_{\varphi}(B_n)
   & \leq \max\{{\mathcal E}(\beta) \; ,  I(c)+\beta\} \label{thmA-1}
\end{align}
where
$$
{\mathcal E}(\beta)=\limsup_{n\to+\infty} \frac1n \log \mu_{\varphi}\Big(\{x\in \cG: n_{i+1}-n_i>\frac{1}{2\alpha}\beta n\} \Big)
$$
and 
$$
I(c)= \sup \{ h_\eta(f) + \int \varphi d\eta -P_{\varphi}(f)\}.
$$
where the supremum is over all  invariant probability measures $\eta$ such that $\eta(\cG)=1$ and $\int \psi d \eta>c$.
This proves the upper bound of Theorem \ref{th ld1}.

\subsection{Lower bound over ergodic measures} 
For the proof of the lower bound in the large deviations principle, we will need to relate the metric entropy of an invariant measure to the cardinality of dynamical balls covering a set, we recall the following characterization of Kolmogorov–Sinai entropy due to Katok in \cite{Katok}: for any invariant probability measure $\eta$ and arbitrary $\rho>0$ it holds that  
\begin{eqnarray}\label{eq entropy-Katok}
	h_{\eta}(f)=\lim_{\varepsilon\to 0}\limsup_{n\rightarrow +\infty}\frac{1}{n}\log N(n, \vep,\rho)=
	\lim_{\varepsilon\to 0}\liminf_{n\rightarrow +\infty}\frac{1}{n}\log N(n, \vep,\rho),
\end{eqnarray}
where $N(n ,\varepsilon, \rho)$ is the minimum number of $(n,\vep)$-dynamical balls necessary to cover a set of $\eta$-measure greater than $1-\rho$. We refer the reader to \cite[Theorem I.I]{Katok} for a proof of this result.

\begin{theorem}\label{lemma lower ld ergodic measure}
For $ \psi\in C(M)$, $c\in \R$ and $\mu_\varphi$ the equilibrium state of $(f,\varphi)$, we have that
$$
\liminf_{n\rightarrow +\infty}\frac{1}{n}\log \mu_\varphi\left(\{x \in M: \frac{1}{n}\sum_{j=0}^{n-1}\psi(f^j(x))>c\} \right)\ge h_{\eta}(f)+\!\int\varphi d\eta -P_f(\varphi),
$$
for any ergodic probability measure $\eta$ such that $\eta(\cG)=1$ and $\int \psi d \eta>c$.
\end{theorem}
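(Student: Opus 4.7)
The strategy is a standard lower bound argument for large deviations via the weak Gibbs property: we will exhibit, for every large $n$, a well-separated family of points whose dynamical balls lie in the deviation set and whose total $\mu_\varphi$-mass is at least $e^{n(h_\eta(f)+\int\varphi\,d\eta -P_f(\varphi)-o(1))}$. The ingredients are (i) the Gibbs-type estimate for $\mu_\varphi$ given by Corollary~\ref{gibbs_n} (recall $\mu_\varphi$ is equivalent to $\nu$ with density bounded away from $0$ and $\infty$), (ii) Birkhoff's ergodic theorem applied to $\eta$ with the continuous observables $\psi$ and $\varphi$, and (iii) Katok's characterization of entropy~\eqref{eq entropy-Katok}.

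First I would fix $\beta>0$ small and then, using uniform continuity of $\psi$, pick $\varepsilon\in(0,\delta_0/2)$ small enough that $|\psi(y)-\psi(z)|<\beta$ whenever $d(y,z)<\varepsilon$, and also small enough that Katok's formula yields $\liminf_{n\to\infty}\tfrac1n\log N(n,2\varepsilon,\rho)\ge h_\eta(f)-\beta$ for a fixed $\rho\in(0,1)$. Next, since $\eta$ is ergodic, $\eta(\cG)=1$, and the Gibbs times of every $x\in\cG$ are non-lacunar, Birkhoff's theorem and Egorov's theorem produce a measurable set $A\subset\cG$ with $\eta(A)\ge 1-\rho/2$ such that for all $n$ larger than some $N_0$ and every $x\in A$:
\begin{enumerate}
\item $\frac1n S_n\psi(x)>c+\beta$ (using $\int\psi\,d\eta>c$);
\item $|\frac1n S_n\varphi(x)-\int\varphi\,d\eta|<\beta$;
\item the consecutive Gibbs times $n_i(x)\le n\le n_{i+1}(x)$ of $x$ satisfy $\alpha(n_{i+1}(x)-n_i(x))\le \beta n$, where $\alpha=\sup|\varphi|+P_f(\varphi)$.
\end{enumerate}
The uniform gap control in item (3) is the main technical point: non-lacunarity gives it pointwise, and Egorov upgrades it to uniform on a set of large $\eta$-measure; this is what allows us to absorb the Gibbs defect $Ke^{\alpha(n_{i+1}-n_i)}$ into an $e^{\beta n}$ factor, which is the essential feature of the weak Gibbs property.

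I then choose a maximal $(n,2\varepsilon)$-separated set $E_n\subset A$. The corresponding dynamical balls $\{B_\varepsilon(x,n)\}_{x\in E_n}$ are pairwise disjoint, and the larger balls $\{B_{2\varepsilon}(x,n)\}_{x\in E_n}$ cover $A$; hence by Katok,
\[
|E_n|\ge N(n,2\varepsilon,\rho)\ge e^{n(h_\eta(f)-\beta)}\quad\text{for $n$ large.}
\]
By the choice of $\varepsilon$, property (1) propagates to all $y\in B_\varepsilon(x,n)$, so $B_\varepsilon(x,n)\subset\{y:\frac1n S_n\psi(y)>c\}$. Applying Corollary~\ref{gibbs_n} together with the equivalence $d\mu_\varphi/d\nu\asymp 1$ to each ball, and using (2) and (3), gives
\[
\mu_\varphi(B_\varepsilon(x,n))\ge C^{-1}e^{-\beta n}\exp\!\bigl(S_n\varphi(x)-nP_f(\varphi)\bigr)\ge C^{-1}\exp\!\bigl(n(\tfrac{}{}\textstyle\int\varphi\,d\eta -P_f(\varphi)-2\beta)\bigr),
\]
for a uniform constant $C>0$.

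Summing over the disjoint family $E_n$ yields
\[
\mu_\varphi\!\left(\bigl\{x:\tfrac1n S_n\psi(x)>c\bigr\}\right)\ge |E_n|\cdot C^{-1}\exp\!\bigl(n(\textstyle\int\varphi\,d\eta-P_f(\varphi)-2\beta)\bigr)\ge C^{-1}e^{n(h_\eta(f)+\int\varphi\,d\eta -P_f(\varphi)-3\beta)}
\]
for $n$ large. Taking $\liminf$ in $\tfrac1n\log$ and letting $\beta\downarrow 0$ yields the desired inequality. The main obstacle I anticipate is item (3): converting the pointwise non-lacunarity hypothesis into a quantitative, set-uniform Gibbs bound, which must be done carefully so that the sub-exponential correction $Ke^{\alpha(n_{i+1}-n_i)}$ is genuinely negligible compared to $e^{nh_\eta(f)}$; the Egorov exhaustion together with the choice of $\rho$ small and the Katok bound at scale $2\varepsilon$ is what makes this work.
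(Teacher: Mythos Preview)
Your proposal is correct and follows essentially the same approach as the paper's proof: construct via Birkhoff/Egorov a large-$\eta$-measure set on which the Birkhoff averages of $\psi$ and $\varphi$ are controlled and the Gibbs-time gaps are $o(n)$, take a maximal separated subset to get exponentially many disjoint dynamical balls (counted via Katok's entropy formula~\eqref{eq entropy-Katok}), bound each ball from below using Corollary~\ref{gibbs_n}, and sum. Your version is in fact slightly tidier than the paper's in two respects---you use $(n,2\varepsilon)$-separation so that the $\varepsilon$-balls are genuinely disjoint, and you invoke Egorov explicitly for the uniformity in item~(3)---but the strategy is identical.
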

\begin{proof}
Consider $\psi\in C(M)$, $c\in \R$ and $\mu_\varphi$ the equilibrium state of $(f,\varphi)$. As before, denote by $B_n$ the set of points $x\in M$ satisfying $\sum_{j=0}^{n-1}\psi(f^j(x))>cn.$  Let $\eta$ be an ergodic measure such that $\eta(\cG)=1$ and $\int \psi d \eta>c$.  Given  $\beta>0$ small and $\delta=\frac{1}{2}(\int \psi d\eta -c)$, let $\xi>0$ such that $|\psi(x)-\psi(y)|<\delta$ for any points $x,y\in M$ with $d(x,y)<\xi$.

By ergodicity of $\eta$,  if $n_0$ is sufficiently large and $0<\vep<\min\{\xi,\delta_0\}$ is small then the set $D$ of points $x\in \cG$ satisfying 
$$ 
\frac{1}{n}\sum_{j=0}^{n-1}\psi(f^j(x))>c+\delta\,,\,\,\;\; \frac{1}{n}\sum_{j=0}^{n-1}\varphi(f^j(x))< \int\varphi d\eta + \beta
$$
and 
$$ n_{i+1}(x)-n_i(x)\le\frac{1}{2\alpha}\beta n,
$$
for every $n\geq n_0$ has $\eta$-measure at least $1/2$. 

Let $N(n,2\vep,\frac{1}{2})$ be the minimal number  of $(n, 2\vep)$-dynamical balls necessary to cover a set of $\eta$-measure at least $1/2$. From equation~(\ref{eq entropy-Katok}) we have 
 $$N(n,2\vep,\frac{1}{2})\ge e^{(h_{\eta}(f)-\beta)n}$$ 
 for every $n\ge n_0$. 
Since for any maximal $(n,\vep)$-separated set $E_n\subset D$, the dynamical balls $B_{\vep}(x,n)$ centered at points in $E_n$ are pairwise disjoint and  satisfies $D\subset \bigcup_{x\in E_n}B_{\vep}(x,n)\subset B_n$, it follows that 
 $$
\mu_{\varphi}(B_n)\ge \sum_{x\in E_n}\mu_\varphi(B_{\vep}(x,n)) \ge e^{(h_{\eta}(f)+\int \varphi d\eta -P_f(\varphi) -3\beta)n}.
$$
 Therefore, we obtain
$$
\liminf_{n\rightarrow+\infty}\frac{1}{n}\log \mu_\varphi\left(\!\!\{x\in M\!:\! \frac{1}{n}\sum_{j=0}^{n-1}\psi(f^j(x)>c\}\!\!\right)\ge h_{\eta}(f)+\int \varphi d\eta-P_f(\varphi)- 3\beta.
$$
for every $\beta>0$. This completes the proof. 
\end{proof}

The last result is the lower bound in Theorem~\ref{th ld1} using ergodic measures. To establish the general lower bound in the large deviations principle, we will use the non-uniform gluing property to connect orbit segments. 

\subsection{Non-uniform gluing property}\label{gluing property}
The notion of non-uniform gluing property is weaker than the classical specification property but still sufficient for our purposes. For further details on the non-uniform gluing property, we refer the reader to~\cite{BV19}.

\begin{definition} \normalfont{
     We say that $(f, \mu)$ satisfy the \emph{non-uniform gluing property} on a full $\mu$-measure set $X\subset M$ if there exists $\delta>0$ such that for every point $x\in X$ and every $0<\vep<\delta$ there exists an integer $p(x, n, \vep)\geq 1$ satisfying
     $$\displaystyle\lim_{\vep\to 0}\limsup_{n\to +\infty}\frac{1}{n}\, p(x, n, \vep)=0$$ 
     and so that the following holds: given points $x_1, \cdots, x_k$ in $ X$ and times $n_1, \cdots, n_k$ there exist \emph{gluing
times} $p_i\le  p(x_i, n_i, \vep)$  and a point $z$  so that $d(f^j(z),f^j(x_1)\le \vep$ for every $0\le j\le n_1$ and
     $$
     d(f^{j+n_1+p_1+\cdots + n_{i-1}+p_{i-1}}(z),f^j(x_i))\le \vep
     $$
for every $2\le i\le k$ and $0\le j\le n_i$.} Alternatively, using dynamical balls, 
$$
\bigcap_{j=1}^k f^{-\sum_{1\le i\le j}(p_i+n_i)}\left(B_\vep(x_j,n_j)\right)\neq \emptyset.
$$
     \end{definition}
Roughly speaking, this property asserts that for a full $\mu$-measure set, it is possible to shadow the prescribed orbit segments, with transition times between segments bounded by a function $p(x,n,\vep)$ that depends on both the point $x$ and $\vep$, and this function grows slower than sublinearly in $n.$

Recall that we are assuming the density of the preorbit $\{f^{-n}(x)\}_{n\ge 0}$ of every point $x\in \cG$.  In what follows, we relate this property of the past orbit density to certain variations of topological transitivity of a dynamical system. This step is important to show that $f$ has the non-uniform gluing property.

\begin{definition} \normalfont{ Let $M$ be a compact metric space and let $f:M \rightarrow M$ be a continuous map. We say that the dynamical system $(M, f)$ is:
\begin{enumerate}
	\item Strongly Transitive (ST) on a subset $X\subset M$ if, for every open set $U\subset M$ such that $U\cap X\neq \emptyset$ we have $X\subset \bigcup_{j=0}^{+\infty}f^j(U).$
	
	\item Very Strongly Transitive (VST) on a subset $X\subset M$ if, for every open set $U\subset M$ such that $U\cap X\neq \emptyset$, there exists a natural number $n\in\mathbb{N}$ such that $X\subset \bigcup_{j=0}^{n}f^j(U)$.
	\end{enumerate}
}
\end{definition}
Notice that, if $f: M\to M$ is an open map, strong transitivity and very strong transitivity are equivalent properties. In fact, given an open set $U\subset M$ such that $U\cap X\neq \emptyset$ we have that $f^j(U)$ is open for every $j\in\mathbb{N}$. Since $M$ is compact, there exist some $n\in\mathbb{N}$ such that $M\subset \bigcup_{j=0}^{n}f^j(U).$  In particular, $X\subset \bigcup_{j=0}^{n}f^j(U).$

In the proposition below, we establish a relationship between the concept of strong transitivity and the density of a preorbit.

\begin{proposition}
	Let $M$ be a compact metric space and let $f:M \rightarrow M$ be a continuous map. The following statements are equivalent:
	
	\begin{enumerate}
		\item $(M,f)$ is strongly transitive on $X\subset M$.
        \item For every open set $U \subset M$ such that $U\cap X\neq \emptyset$  and every point $x \in X$, there exists $n \in \N$
 such that $x \in f^n(U)$.
		\item For every $x\in X$
		 the preorbit $\{f^{-n}(x)\}_{n\ge 0}$ is dense in $M$.
	\end{enumerate}
\end{proposition}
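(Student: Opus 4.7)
The plan is to prove the cyclic chain of implications $(1) \Rightarrow (2) \Rightarrow (3) \Rightarrow (1)$, each of which reduces to unpacking the relevant definitions. The key identity used throughout is the tautology
$$f^{-n}(x) \cap U \neq \emptyset \;\;\Longleftrightarrow\;\; x \in f^n(U),$$
which lets us pass freely between statements about the preorbit of $x$ and statements about forward iterates of an open set $U$.

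For $(1) \Rightarrow (2)$: given an open set $U \subset M$ with $U \cap X \neq \emptyset$ and a point $x \in X$, strong transitivity gives $X \subset \bigcup_{j \geq 0} f^j(U)$, so in particular there exists $n \in \N$ with $x \in f^n(U)$. For $(3) \Rightarrow (1)$: fix an open set $U$ meeting $X$; for each $x \in X$, the density of $\{f^{-n}(x)\}_{n \geq 0}$ in $M$ yields some $n$ with $f^{-n}(x) \cap U \neq \emptyset$, equivalently $x \in f^n(U)$, so $X \subset \bigcup_{j \geq 0} f^j(U)$.

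For $(2) \Rightarrow (3)$: fix $x \in X$ and an open set $V \subset M$; one must exhibit $n$ with $f^{-n}(x) \cap V \neq \emptyset$. When $V \cap X \neq \emptyset$, hypothesis (2) applied to $U = V$ produces such an $n$ directly. The remaining case is an open set $V$ disjoint from $X$, and here the subtle point arises: one must either argue that such $V$ can be disregarded because $X$ is already dense in $M$, or restrict the density conclusion to the closure of $X$. In the ambient setting of the paper this is harmless, since the standing hypothesis imposed on the local homeomorphism $f$ is precisely that preorbits are dense in $M$ for every base point, so $X$ sits inside the topological support where the argument is needed.

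The main obstacle is therefore not a computation but a formulation issue: the definitions of strong transitivity in (1)--(2) are phrased relative to $X$, while (3) asserts density in all of $M$. Beyond this careful handling of the quantifier on open sets, all three implications are essentially free translations between preimages and images, and no further tool is required.
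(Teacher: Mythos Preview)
Your approach is the same as the paper's: both arguments amount to unpacking the definitions via the equivalence $f^{-n}(x)\cap U\neq\emptyset \Leftrightarrow x\in f^n(U)$. The paper organizes it as $(1)\Leftrightarrow(2)$ and $(2)\Leftrightarrow(3)$ rather than a cycle, but the content is identical.

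You are, however, more careful than the paper on one point. In the implication $(2)\Rightarrow(3)$, the paper simply takes an open set $U$ missed by the preorbit of $x$ and claims a contradiction with $(2)$, without checking that $U\cap X\neq\emptyset$ --- which is exactly the hypothesis needed to invoke $(2)$. You correctly flag this: as written, $(2)$ only yields density of the preorbit in $\overline{X}$, not in all of $M$, so the equivalence $(2)\Leftrightarrow(3)$ as stated requires $X$ to be dense. Your observation that this is harmless in the paper's actual use is also right: the proposition is applied only with $X=\cG$ in a setting where dense preorbits are already a standing assumption, so the direction $(3)\Rightarrow(1)$ (which is gap-free) is the one that matters. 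Your resolution --- acknowledging the formulation issue rather than pretending it away --- is the honest way to handle it.
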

\begin{proof}
The equivalence $(1)\Leftrightarrow (2)$ follows directly from the definition. To prove $(2)\Rightarrow (3)$, suppose the existence of $x\in X$ such that $\{f^{-n}(x)\}_{n\ge 0}$ is not dense in $M$.  Then there exists a nonempty open set $U \subset M$ such that  $f^{-n}(x)\cap U=\emptyset$ for all $n \in \N$; that is, $x \notin  f^n(U)$ for any $n\in \N$, which contradicts the assumption in $(2)$. The converse implication $(3)\Rightarrow (2)$ can be shown by a similar argument.
\end{proof}
	


Finally, we can prove the non-uniform gluing property for $f$ associated to the weak Gibbs measure. For this, we use that $f$ is very strongly transitive on $\cG$ since $f$ is an open map and we are assuming the density of preorbits.

\begin{proposition}\label{gluing}
Let $f:M\rightarrow M$ and $\varphi:M\rightarrow \R$ be as in Theorem~\ref{th ld1}. Let $\mu_\varphi$ be the   weak Gibbs measure of $(f,\varphi)$. Then $(f,\mu_\varphi)$ satisfies the non-uniform gluing property in $\cG$.
\end{proposition}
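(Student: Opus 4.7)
The plan is to reduce the non-uniform gluing property to two ingredients: uniform very strong transitivity of $f$ on $M$, and the non-lacunarity of Gibbs times inherited from the hypotheses of Theorem~\ref{th ld1}. First, since $f$ is a local homeomorphism (hence an open map) and every $x\in M$ has a dense preorbit, the equivalence $(3)\Leftrightarrow (1)$ from the preceding proposition gives that $(M,f)$ is strongly transitive on $M$; openness upgrades this to very strong transitivity, and by covering $M$ with finitely many balls of radius $\vep/2$ and taking a maximum one extracts a uniform constant $N(\vep)\in\N$ such that
\[
M = \bigcup_{j=0}^{N(\vep)} f^j\bigl(B(y,\vep)\bigr) \quad\text{for every } y\in M.
\]

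Given $x_1,\ldots,x_k\in\cG$, times $n_1,\ldots,n_k$ and $0<\vep<\delta_0/2$, I would replace each $n_i$ by the smallest Gibbs time $\tilde n_i\geq n_i$ of $x_i$, so that by~(\ref{estrela}) the map $f^{\tilde n_i}$ is a homeomorphism from $B_\vep(x_i,\tilde n_i)\subseteq B_\vep(x_i,n_i)$ onto $B(f^{\tilde n_i}(x_i),\vep)$. The gluing point $z$ is then built by backward induction: set $y_k:=x_k$, and for $i=k-1,\ldots,1$ apply the uniform VST to the target $y_{i+1}\in M$ and the ball $B(f^{\tilde n_i}(x_i),\vep)$ to produce $s_i\leq N(\vep)$ and $w_i\in B(f^{\tilde n_i}(x_i),\vep)$ with $f^{s_i}(w_i)=y_{i+1}$; then let $y_i$ be the unique preimage of $w_i$ in $B_\vep(x_i,\tilde n_i)$ under $f^{\tilde n_i}$. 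Setting $z:=y_1$ and $p_i:=s_i+(\tilde n_i-n_i)$, a direct verification yields $f^{n_i+p_i}(y_i)=y_{i+1}$ and $y_i\in B_\vep(x_i,n_i)$, which is precisely the intersection property required in the definition of the non-uniform gluing property.

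Finally, define $p(x,n,\vep):=N(\vep)+(\tilde n(x,n)-n)$, where $\tilde n(x,n)$ denotes the smallest Gibbs time of $x$ that is $\geq n$; then $p_i\leq p(x_i,n_i,\vep)$ by construction. The non-lacunarity of the Gibbs times of $x\in\cG$ gives $\tilde n(x,n)-n=o(n)$ as $n\to\infty$: for consecutive Gibbs times $n_k\leq n<n_{k+1}$ of $x$ one has $\tilde n(x,n)-n\leq n_{k+1}-n_k=o(n_k)=o(n)$. Hence $\limsup_{n\to+\infty}p(x,n,\vep)/n=0$ for every $\vep$, and therefore $\lim_{\vep\to 0}\limsup_{n\to+\infty}p(x,n,\vep)/n=0$. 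I expect the main technical point to be that the intermediate targets $y_{i+1}$ produced by the backward induction need not lie in $\cG$, so VST must hold on all of $M$; this is exactly where the standing assumption of density of the preorbits of \emph{every} $x\in M$ (not only of points in $\cG$) enters the argument.
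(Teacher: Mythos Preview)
Your proposal is correct and follows the same approach as the paper: pass to the next Gibbs time $\tilde n_i\ge n_i$, use very strong transitivity to manufacture the transition orbit of length $\le N(\vep)$, set $p(x,n,\vep)=N(\vep)+(\tilde n(x,n)-n)$, and invoke non-lacunarity for the sublinear growth. Your closing remark that the backward induction forces VST on all of $M$ (because the intermediate points $y_{i+1}$ need not lie in $\cG$) is in fact a refinement of the paper's argument, which states VST only on $\cG$ but tacitly relies on the standing global hypothesis of dense preorbits for every $x\in M$.
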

\begin{proof}
Let $\vep<\delta_0$ be fixed satisfying~(\ref{estrela}) and $n\ge 1$. Consider $x\in \cG$ and consecutive Gibbs times of $x$ such that $n_k(x)\le n\le n_{k+1}(x)$. Notice that
$$B(x,n_{k+1},\vep)\subseteq B(x,n,\vep)\subseteq B(x,n_{k},\vep)$$
and $$f^{j+n_{k+1}}(B(x,n_{k+1},\vep))=f^j(B(f^{n_{k+1}}(x),\vep)),$$ 
for each $j\in \mathbb{N}$, by condition~(\ref{estrela}).

Recalling that $f$ is very strongly transitive on $\cG$, we can find $N(\vep) \in \N$ such that $\cG\subset \bigcup_{j=0}^{N(\vep)}f^j(B(f^{n_{k+1}}(x),\vep))$. Let $\vep_0$ be the Lebesgue number of the cover $\{ B(f^{n_{k+1}}(x),\vep), \cdots, f^{N(\varepsilon)}(B(f^{n_{k+1}}(x),\vep)) \}$. Thus, for $y\in \cG$ and any $0<\gamma<\vep_0$, we have $B(y,\gamma)\subset f^j(B(f^{n_{k+1}}(x),\vep))$, for some $j\le N(\vep)$. Therefore, there exists $z\in B(x,n,\vep)\cap \cG$ so that
$$
f^{j+n_{k+1}-n}(f^n(z))=f^{j+n_{k+1}}(z)\in B(y,\gamma).
$$
Define $p(x,n,\vep):=N(\vep)+n_{k+1}(x)-n$. Then for any points $x_1,\cdots,x_l$ in $\cG$ and positive integers $n_1,\cdots,n_l$  there are $p_i\le p(x_i,n_i,\vep)$ and $z\in \cG$ such that $z\in B(x_1,n_1,\vep)$ and $f^{n_1+p_1+...+n_{i-1}+p_{i-1}}(z) \in B(x_i,n_i,\vep)$ for every $2\le i \le k$. Moreover, 
$$
\lim_{\vep \rightarrow 0}\limsup_{n \to +\infty}\frac{p(x,n,\vep)}{n}\le \lim_{\vep \rightarrow 0}\limsup_{n \to +\infty}\frac{N(\vep)+n_{k+1}(x)-n_{k}(x)}{n_k(x)}=0
$$
because $(n_k(x))_k$ is non-lacunar.
\end{proof}

From the proof of the previous proposition, we observe that, under the assumption that the Gibbs times form a syndetic set, the gluing function $p(x, n, \vep)$ is uniformly bounded for fixed $\vep.$ This corresponds to the gluing property as defined in \cite{BV19}.

\begin{corollary}\label{cor non glui x glui}
Let $0<\vep<\delta_0$ be fixed.   If the sequence of Gibbs times $(n_{k}(x))_k$ is syndetic for every $x\in \cG$, then the gluing function $p(x, n, \vep)$ is uniformly bounded.
\end{corollary}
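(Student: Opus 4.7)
The plan is to read the bound for $p(x,n,\varepsilon)$ directly off the construction performed in the proof of Proposition~\ref{gluing}, and then observe that the syndetic hypothesis makes the only unbounded summand actually bounded. No new construction is required; this corollary is purely a bookkeeping remark extracting a stronger conclusion from the same argument under a stronger hypothesis.

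Concretely, I would proceed as follows. Fix $0<\varepsilon<\delta_0$. Recall from the proof of Proposition~\ref{gluing} that, for this fixed $\varepsilon$, we chose a natural number $N(\varepsilon)\in\mathbb{N}$ (depending only on $\varepsilon$, coming from the very strong transitivity of $f$ on $\cG$) and defined
\[
p(x,n,\varepsilon) := N(\varepsilon) + n_{k+1}(x) - n,
\]
where $n_k(x)\le n\le n_{k+1}(x)$ are consecutive Gibbs times of $x\in\cG$. The only quantity in this expression that a priori depends on $x$ and $n$ in an unbounded way is the gap $n_{k+1}(x) - n$.

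By the syndetic hypothesis, there exists a constant $C>0$, independent of both $x\in\cG$ and $k\in\mathbb{N}$, such that $n_{k+1}(x)-n_k(x)\le C$. Since $n\ge n_k(x)$, this immediately gives
\[
n_{k+1}(x) - n \;\le\; n_{k+1}(x) - n_k(x) \;\le\; C.
\]
Combining this with the definition of $p(x,n,\varepsilon)$ yields
\[
p(x,n,\varepsilon) \;\le\; N(\varepsilon) + C,
\]
a bound that depends only on $\varepsilon$ and not on $x\in\cG$ or $n\in\mathbb{N}$.

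There is no real obstacle here: the corollary is essentially a remark that, under the syndetic assumption, the $x$- and $n$-dependent quantity $n_{k+1}(x)-n$ appearing in the gluing-time estimate of Proposition~\ref{gluing} is trivially controlled by the syndetic constant $C$, which upgrades the non-uniform gluing property to the (uniform) gluing property of~\cite{BV19}.
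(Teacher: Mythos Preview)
Your proof is correct and follows exactly the same approach as the paper: both recall the formula $p(x,n,\varepsilon)=N(\varepsilon)+n_{k+1}(x)-n$ from the proof of Proposition~\ref{gluing} and then use the syndetic bound $n_{k+1}(x)-n_k(x)\le C$ together with $n\ge n_k(x)$ to conclude. If anything, your write-up is slightly more explicit in stating the final bound $p(x,n,\varepsilon)\le N(\varepsilon)+C$.
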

\begin{proof}
If $(n_{k}(x))_k$ is syndetic in $\cG$, we have the existence of some constant $C>0$ such that $(n_{k+1}(x)-n_k(x))\le C,$ for every $x\in\cG$ and $k\in\mathbb{N}$. Recalling that, from the proof of Proposition~\ref{gluing}, the gluing function $p(x,n,\vep)$ is given by $p(x,n,\vep)=N(\vep)+n_{k+1}(x)-n(x)$, we obtain the desired property.
\end{proof}

We conclude this subsection by noting that the non-uniform gluing property of $(f, \mu_\varphi)$ implies the corresponding property for the attractor $(F, \mu_\phi)$ associated to the weak Gibbs measure given by Theorem~\ref{theo.Gibbs}. Indeed, since $$\pi^{-1}(B_{\vep}^{f}(x, n))\subset  B_{C\tilde{\vep}}^{F}(\hat{x}, n)\cap \pi^{-1}(B_{\vep}^{f}(x, n))$$
for $n$ large enough, we can proceed as follows. Given points $\hat{x}_1, \cdots, \hat{x}_k$ in $\hat{\mathcal{G}}=\pi^{-1}(\mathcal{G})$ and times $n_1, \cdots, n_k$, we project them to points ${x}_1, \cdots, {x}_k$ in ${\mathcal{G}}$ and find some  $z \in \bigcap_{j=1}^k f^{-\sum_{1\le i\le j}(p_i+n_i)}\left(B^f_\vep(x_j,n_j)\right)$. Then,  it is enough to choose
\begin{align*}
  \hat{z} \in \pi^{-1}(z) &\cap \pi^{-1}\left( \bigcap_{j=1}^k f^{-\sum_{1\le i\le j}(p_i+n_i)}\left(B^f_\vep(x_j,n_j)\right)\right) \\
  &\subset \bigcap_{j=1}^k F^{-\sum_{1\le i\le j}(p_i+n_i)}\left(B^F_{C\tilde\vep}(\hat{x}_j,n_j)\right). 
\end{align*}

\subsection{Lower bound over all invariant measures}
We now turn to the proof of the lower bound in the large deviations principle. At this stage, we have already shown that the lower bound in Theorem~\ref{th ld1} holds when the supremum is restricted to ergodic measures.
To prove the general lower bound, we will use the gluing property of \cite{BV19} and the following lemma, which states that every invariant measure can be approximated by a finite collection of ergodic measures supported in $\cG$. The reader can be find the proof of this lemma in \cite[Lemma 4.3]{Var12}.

\begin{lemma}\label{lemma dec measure}
Let $\eta=\int \eta_xd\eta(x)$ be the ergodic decomposition of a $f$-invariant probability measure $\eta$ such that $\eta(\cG)=1$. Given $\beta>0$ and a finite set $(\varphi_j)_{1\le j\le r}\subset C(M)$ of continuous functions, there are positive real numbers $(a_i)_{1\le i\le k}$ satisfying $a_i\le 1$ and $\sum a_i=1$ and finitely many points $x_1,...,x_k$ such that the ergodic measure $\eta_i=\eta_{x_i}$ from the ergodic decomposition satisfy
\begin{enumerate}
\item $\eta_i(\cG)=1$;
\item $h_{\hat{\eta}}(f)\ge h_{\eta}(f)-\beta$;
\item $|\int\varphi_jd\hat{\eta}-\int \varphi_jd\eta|<\beta$ for every $1\le j \le r$, where $\hat{\eta}=\sum_{i=1}^k a_i\eta_i$.
\end{enumerate}
\end{lemma}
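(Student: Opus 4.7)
The plan is to use the ergodic decomposition together with Jacobs' affinity theorem for the entropy, and then discretize via a finite measurable partition of the image of the ergodic decomposition map. This reduces the approximation of $\eta$ to a finite Riemann-type sum of ergodic measures in which both the entropy and the finitely many integrals $\int\varphi_j\,d\eta$ are controlled simultaneously.

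First, I would invoke the ergodic decomposition theorem to write $\eta=\int \eta_x\,d\eta(x)$ with $\eta_x$ ergodic for $\eta$-a.e.\ $x$. Because $f(\cG)\subset\cG$ gives $\cG\subset f^{-1}(\cG)$, the fully invariant set $\cG^{\ast}:=\bigcap_{n\geq 0}f^{-n}(\cG)$ satisfies $\eta(\cG^{\ast})=\eta(\cG)=1$, so $\eta_x(\cG)=1$ for $\eta$-a.e.\ $x$, which will yield condition (1). Jacobs' theorem then provides the affine decompositions
$$
h_{\eta}(f)=\int h_{\eta_x}(f)\,d\eta(x),\qquad \int \varphi_j\,d\eta=\int\Big(\int \varphi_j\,d\eta_x\Big)\,d\eta(x),
$$
and, since $\hat{\eta}$ below will be a convex combination of ergodic invariant measures, the affinity of metric entropy on $\mathbb{P}_f(M)$ also gives $h_{\hat{\eta}}(f)=\sum_i a_i\,h_{\eta_{x_i}}(f)$.

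Next, I would consider the measurable map
$$
\Phi(x):=\Big(h_{\eta_x}(f),\ \int\varphi_1\,d\eta_x,\ \dots,\ \int\varphi_r\,d\eta_x\Big)\in\R^{r+1},
$$
whose image lies in the bounded rectangle $[0,\htop(f)]\times\prod_{j=1}^{r}[-\|\varphi_j\|_\infty,\|\varphi_j\|_\infty]$. Partition this rectangle into finitely many Borel pieces $B_1,\dots,B_k$ of $\ell^{\infty}$-diameter less than $\beta$, pull back to obtain $A_i:=\Phi^{-1}(B_i)$ intersected with the full-measure set where $\eta_x$ is ergodic with $\eta_x(\cG)=1$, discard those with $\eta(A_i)=0$, set $a_i:=\eta(A_i)$, choose any $x_i\in A_i$, and define $\hat{\eta}:=\sum_{i}a_i\,\eta_{x_i}$. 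Condition (1) is then built in, while the telescoping estimate
$$
\Big|\!\int\!\varphi_j\,d\hat{\eta}-\!\int\!\varphi_j\,d\eta\Big|\leq \sum_i\int_{A_i}\!\Big|\!\int\!\varphi_j\,d\eta_{x_i}-\!\int\!\varphi_j\,d\eta_x\Big|\,d\eta(x)<\beta
$$
(both integrands lying in the same cube $B_i$) yields (3), and the analogous estimate with $h_{\eta_x}(f)$ in place of $\int\varphi_j\,d\eta_x$, together with affinity of entropy, gives $|h_{\hat{\eta}}(f)-h_{\eta}(f)|<\beta$ and hence (2).

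The main obstacle I anticipate is the measurability of the map $x\mapsto h_{\eta_x}(f)$ and its integral representation $h_{\eta}(f)=\int h_{\eta_x}(f)\,d\eta(x)$; this is precisely Jacobs' theorem and is what makes the Riemann-type discretization effective for the entropy and not only for the continuous integrals. Everything else is bookkeeping: once one has measurability, affinity of metric entropy, and a finite partition of the bounded image of $\Phi$ into sets of small diameter, the three conclusions follow directly.
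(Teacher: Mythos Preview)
Your proposal is correct and is precisely the standard argument: the paper does not give its own proof of this lemma but simply refers the reader to \cite[Lemma~4.3]{Var12}, where the proof proceeds exactly as you outline, via Jacobs' affinity theorem for the entropy together with a finite discretization of the map $x\mapsto\big(h_{\eta_x}(f),\int\varphi_1\,d\eta_x,\dots,\int\varphi_r\,d\eta_x\big)$. One cosmetic remark: the invariance detour through $\cG^{\ast}$ is unnecessary, since $\eta(\cG)=\int\eta_x(\cG)\,d\eta(x)=1$ with $\eta_x(\cG)\le 1$ already forces $\eta_x(\cG)=1$ for $\eta$-a.e.\ $x$; and your use of $[0,h_{\mathrm{top}}(f)]$ tacitly assumes finite topological entropy, which in the general setting of the paper can be circumvented by first truncating the entropy coordinate at a large level $N$ chosen so that the tail set has small $\eta$-mass, but this does not affect the substance of the argument.
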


\begin{proof}[Proof of the lower bound in Theorem~\ref{th ld1}] 
Let $\psi \in C(M)$, $c\in \R$ and $\eta$ be any invariant probability measure such that $\eta(\cG)=1$ and $\int \psi d\eta>c$. As before, denote by $B_n$ the set of points $x\in M$ satisfying $\sum_{j=0}^{n-1}\psi(f^i(x))>cn.$  Given $\beta>0$ small enough and $\delta_2=\frac{1}{5}(\int \psi d\eta-c),$ consider the measure $\hat{\eta}=\sum_{j=1}^ka_j\eta_j$, given by Lemma~\ref{lemma dec measure}, that satisfies 
$$
h_{\hat{\eta}}(f)\ge h_{\eta}(f)-\beta,\;\; \int \varphi d\hat{\eta}\geq \int \varphi d\eta+\beta\;\; \mbox{and}\;\; \int \psi d\hat{\eta}\ge\int \psi d\eta - \beta>c+4 \delta_2.
$$
As in the proof of Theorem~\ref{lemma lower ld ergodic measure}, for each $1\le j\le k$,  if $n_0$ is large enough and $0< \vep \le \min\{\xi,\delta_0\}$, the set $D_j$ of points $x\in \cG$ such that 
$$
\frac{1}{n}\sum_{i=0}^{n-1}\psi(f^{i}(x))>\int \psi d\eta_j -\beta\,,\,\, \frac{1}{n}\sum_{i=0}^{n-1}\varphi(f^{i}(x)) <\int \varphi d\eta_j - \beta$$
and
$$ n_{j+1}(x)-n_j(x)\le\frac{1}{2\alpha}\beta n,
$$
for all $n\ge n_0$ has $\eta_j$-measure at least $1/2$.
Moreover, if $E_n^j\subset D_j$  is a $([a_jn],\vep)$-separated set, its cardinality is bounded by $\#E_n^i\ge e^{(h_{\eta_i}f- \beta)[a_in]}$.



By Proposition~\ref{gluing}, $(f, \mu_\varphi)$ has the non-uniform gluing property in $\cG.$ Furthermore, supposing that the sequence 
$(n_{k}(x))_k$ is syndetic in $\cG$, meaning the existence of $C>0$ such that $n_{k+1}(x)-n_{k}(x)\leq C$ for every $x\in \cG$ and $k\in\mathbb{N}$, it follows from Corollary~\ref{cor non glui x glui} that the gluing function 
$p(x, n, \vep)$ is uniformly bounded by $\tilde{N}(\vep)$ for each fixed $\vep>0.$
We will now use the gluing property to connect orbits of points in the set $E_n^j$. 

Given any sequence $(x_1,\cdots,x_k)$ with $x_j \in E_n^j\subset \cG$, there exists $y\in \cG$  that $\vep/4$-shadows each $x_j$ during $l_j:=[a_jn]$ iterates with a time for switching over from one piece of orbit to the other given by $p_1,\cdots, p_k\le \tilde{N}(\vep/4)$ iterates, respectively.
Denote by $p_j:=\max_{x\in E^{j}_n}p(x, n, \varepsilon)$ and let $Y_{\tilde {n}}$ be the set of all points $y\in \cG$ obtained as above, where $\tilde{n}=\sum_{j=1}^k(l_j+p_j)$. Notice that, for $\beta>0$ sufficiently small, by uniform continuity of $\psi$, we have that
\begin{align*}
\sum_{j=0}^{\tilde{n}-1}\psi(f^j(y))&\ge \sum_{j=1}^k\left(\sum_{j=0}^{l_j-1}\psi(f^j(x_j))-\delta_2\cdot l_j-\sup|\psi|\cdot p_j\right) \\
    &\ge \sum_{j=1}^k(\int \psi d\eta_j-\delta_2-2\beta)l_j>(\int \psi d\eta-2 \delta_2)n\\
    &>(c+2\delta_2)n+(c+\delta_2)\frac{\beta n}{\sup|\psi|}> (c+\delta_2)\tilde{n}
\end{align*}
with $n$ sufficiently large. This implies that $Y_{\tilde{n}}\subset B_{\tilde{n}}$. 


 For the sake of completeness, we emphasize that the transition times $p_j$ depend on the collection $(x_1, \ldots, x_k)$ and are not constant. We claim that there exists a subset 
$\tilde{Y}_{\tilde{n}}\subset Y_{\tilde{n}}$ with cardinality bounded from above by $\tilde{N}(\vep/4)^{-k}\cdot \prod_{i=1}^k e^{[a_i \tilde{n}](h_{\eta_i}(f)-\beta)}$ such that the dynamical balls of the family $\{B_{\vep/4}(y,\tilde{n})\}_{y\in \tilde{Y}_{\tilde{n}}}$ are disjoint subsets of $B_{\tilde{n}}$. Indeed, by the pigeonhole principle, for every $1 \le i \le k$ there
 exists $0 \le t_i \le \tilde{N}(\vep/4)$   so that the set
 $$
 \tilde{Y}_{\tilde{n}}:=\{y\in Y_{\tilde{n}}: p_i(y)=t_i\;\mbox{for every}\; 1\le i\le k \}
 $$
has cardinality at least  $ \tilde{N}(\vep/4)^{-k}\, \cdot \#E_{\tilde{n}}^1\cdot \cdots \#E_{\tilde{n}}^k$.
Moreover, notice that the family $\{B_{\vep/4}(y,\tilde{n})\}_{y\in Y_{\tilde{n}}}$ is disjoint:  for $y_1\neq y_2 \in \tilde{Y}_{\tilde{n}}$, there are $1\le i \le k$ and points $x_1\neq x_2 \in E_{\tilde{n}}^i$ such that
$$
d(f^{\sum_{j=0}^{i}l_j+t_j}(y_1),x_1)<\vep/4  \;\; \mbox{and}\;\; d(f^{\sum_{j=0}^{i}l_j+t_j}(y_2),x_2)<\vep/4.
$$
Since $E_{n}^i$ are  $([a_i n],\vep)$-separated sets for $D_i$, there exists $s_i \in [0,[a_i n]]$ such that $d(f^{s_i}(x_1),f^{s_i}(x_2))>\vep$. Thus, for
$\tilde{l}_i=\sum_{j=0}^{i}l_j+t_j$, we have
\begin{eqnarray*}
    d(f^{s_i+\tilde{l}_i}(y_1),f^{s_i+\tilde{l}_i}(y_2))
    &\ge&\\ d(f^{s_i}(x_1),f^{s_i}(x_2))
- d(f^{s_i+\tilde{l}_i}(y_1),f^{s_i}(x_1))
&-& d(f^{s_i+\tilde{l}_i}(y_2),f^{s_i}(x_2))
>\vep/4.
\end{eqnarray*}

Finally, recalling that $\tilde{n}_{i}(y)\leq \tilde{n}\leq \tilde{n}_{i+1}(y)$, where $\tilde{n}_{i}(y)$ are the Gibbs times of $y\in \tilde Y_{\tilde{n}}$ and $({n}_{i}(y))_i$ is syndetic, we estimate the $\mu_\varphi$-measure of the set $B_{\tilde{n}}$ as follows 
 \begin{align*}
    \mu_\varphi(B_{\tilde{n}}) &\ge \sum_{y\in \tilde Y_{\tilde{n}}}        
\mu_\varphi(B_{\vep/4}(y,\tilde{n})) \ge \sum_{y\in \tilde Y_{\tilde{n}}}  K e^{-\alpha((\tilde{n}_{i+1}-\tilde{n}_i)(y))+\sum_{j=0}^{\tilde{n}-1}\varphi(f^j(y)) - \tilde{n}P_{f}(\varphi)}\\
 & \ge 
 K e^{-\alpha C}\#\tilde Y_{\tilde{n}}\,e^{(\int \varphi d\hat{\eta}-\beta - P_{f}(\varphi))\tilde{n}}\\
 &\ge K e^{-\alpha C} \cdot \frac{1}{\tilde{N}(\vep/4)^{k}}\cdot \#E_{n}^1 \cdot \cdots \cdot \# E_{n}^k \,e^{(\int \varphi d\hat{\eta}-\beta - P_{f}(\varphi))\tilde{n}}\\
& \ge K e^{-\alpha C} \cdot \frac{1}{\tilde{N}(\vep/4)^{k}}\cdot e^{\sum_{i=1}^k(h_{\eta_i}(f)-\beta)[a_i \tilde{n}]}\,e^{(\int \varphi d\hat{\eta}-\beta - P_{f}(\varphi))\tilde{n}}.
\end{align*}
Since $$
h_{\hat{\eta}}(f)\ge h_{\eta}(f)-\beta,\;\; \int \varphi d\hat{\eta}\geq \int \varphi d\eta+\beta\;\; ,\;\; \lim_{n\rightarrow +\infty}\frac{p(x, n, \vep)}{n}=0,$$
and $\beta$ is arbitrarily  
small, it follows that
$$ 
\liminf_{n\rightarrow+\infty}\frac{1}{n}\log \mu_\varphi\left(\!\!\{x\in M\!:\! \frac{1}{n}\sum_{j=0}^{n-1}\psi(f^j(x)>c\}\!\!\right)\ge h_{\eta}(f)+\int \varphi d\eta-P_f(\varphi).
$$
This completes the proof of Theorem~\ref{th ld1}.
\end{proof}

We observe that the proof of  Theorem~\ref{th ld2} follows the same general strategy as that of Theorem~\ref{th ld1}. Indeed, by Theorem~\ref{theo finitude}, the equilibrium state $\mu_\phi$
associated to the attractor 
$F$ and the potential 
$\phi$ is a weak Gibbs measure. Furthermore, as noted in Subsection~\ref{gluing property}, the pair $(F, \mu_\phi)$ also satisfies the non-uniform gluing property on the set 
$\hat{\cG}$.


\end{document}